\theoremstyle{plain}
\newtheorem{thm}{Theorem}[section]
\newtheorem{ass}{Assumption}
\newtheorem{lemma}[thm]{Lemma}
\newtheorem{corollary}[thm]{Corollary}
\newtheorem{prop}[thm]{Proposition}
\theoremstyle{remark}
\newtheorem{defn}[thm]{Definition}
\newtheorem{rem}[thm]{Remark}
\newtheorem{Condition}{Condition}
\newcommand{\R}{\mathbb{R}}
\newcommand{\N}{\mathbb{N}}
\newcommand{\Q}{\mathbb{Q}}
\newcommand{\Z}{\mathbb{Z}}
\newcommand{\E}{\mathbb{E}}
\renewcommand{\P}{\mathbb{P}}
\newcommand{\calY}{\mathcal{Y}}
\newcommand{\mbb}[1]{\mathbb{#1}}
\newcommand{\1}{1\hspace{-0.098cm}\mathrm{l}}
\newcommand{\dd}{{\mathrm{d}}}
\newcommand{\Ex}[1]{\ensuremath{\mathbb{E} \left[#1 \right]}}
\newcommand{\Prob}[1]{\ensuremath{\mathbb{P} \left(#1 \right)}}
\begin{document}

\begin{frontmatter}
\title{The effective strength of selection in random environment}
\runtitle{Selection in random environment}

\begin{aug}
\author[A]{\fnms{Adri\'an} \snm{Gonz\'alez Casanova}\ead[label=e1]{agonz591@asu.edu}\orcid{0000-0002-1744-6506}}, 
\author[B]{ \fnms{Dario} \snm{Span\`o}\ead[label=e3]{d.spano@warwick.ac.uk}\orcid{0000-0001-5585-1943}}
\and
\author[C]{\fnms{Maite} \snm{Wilke-Berenguer}\ead[label=e4]{maite.wilkeberenguer@hu-berlin.de}\orcid{0000-0002-6057-495X}} 
\address[A]{School of Mathematical and Statistical Sciences, Arizona State University \printead{e1}
}

\address[B]{Department of Statistics, University of Warwick, \printead{e3}}

\address[C]{Institut f\"ur Mathematik, Humboldt-Universit\"at zu Berlin, \printead{e4}}
\end{aug}

\begin{abstract}
We analyse a family of two-types Wright-Fisher models with selection in a random environment and skewed offspring distribution. We provide a calculable criterion to quantify the impact of different shapes of selection on the fate of the weakest allele, and thus compare them. The main mathematical tool is duality, which we prove to hold, also in presence of random environment (quenched and in some cases annealed), between the population's allele frequencies and genealogy, both in the case of finite population size and in the scaling limit for large size. Duality also yields new insight on properties of branching-coalescing processes in random environment, such as their long term behaviour.
\end{abstract}

\begin{keyword}[class=MSC2010]
\kwd[Primary ]{60G99;  60K3; 60K37}
\kwd[; secondary ]{92D10;  92D11; 92D25}
\end{keyword}

\begin{keyword}
\kwd{Duality, Wright-Fisher, Selection}
\kwd{Random Environment}
\end{keyword}

\end{frontmatter}

\allowdisplaybreaks 


\section{Introduction and overview of content} \label{sec:branching}

In population genetics the selective fitness of a species, or an allele, is widely thought to be permeable to the influence of environmental factors which may vary randomly in time: in certain generations, the population may be subject to particularly stressful external conditions (extreme temperatures, cataclysms, abrupt invasions of pathogens, hurricanes etc) making the selective advantage of some allelic types unusually more pronounced than in other generations. This could be due to a better ability to secure resources, or to a lower sensitivity to stress,  or to various other reasons \citep{ huracain, G73, KL73, KL74}.
\\
Thus if we consider a population comprising only two allelic types $0$ and $1$, say, where type $1$ is assumed to be always fitter than type $0$, it is intuitively clear that the possible occurrence of such \lq\lq cataclysmic\rq\rq generations would enhance the probability of extinction of type $0$.
It is, however, less obvious if rare but strong selective events put type $0$ more at risk of extinction than a small but constant-in-time selective pressure. The problem is reminiscent of similar questions arising in experimental biology, where, for example, some detrimental substance (antibiotic) is inoculated in a population of bacteria, and there is an interest in determining whether a constant administration of the substance in low concentration dosage is more effective at wiping out the population, than a more occasional inoculation with higher dosages of varying concentration \cite{pena}.
This paper aims to study the long term effects of rare selective events and quantify how big and frequent cataclysms owed to random environment must be, in order to wipe out a family as effectively as constant weak selection in a steady environment. 
\\ 

To this purpose, we will construct a suitable family of jump-diffusion allele-frequency models, generalising the well-known family of one-dimensional Lambda-Fleming-Viot models to encompass frequency-dependent selection in a time-varying random environment. It will turn out that, even in the simple case of independent and identically distributed ({\em i.i.d.}) environment, it is not sufficient to compare the total rate $\alpha_{\mathfrak s},$ say, of rare selective events against the rate $w$ of classical constant weak selection: it is necessary to take into account also information about the actual \lq\lq shape\rq\rq of rare selection, reflecting the specific action of the random environment. This will lead us to a rigorous derivation of a shape parameter $\alpha^*\geq 1$, obtained in Section \ref{sec:longterm} (formula \eqref{eq:alpha*}), as a non-trivial explicit function of both the random environment and of the offspring distribution influenced by it.
\\
The central result in this regard is Theorem \ref{thm:XlongtermGriffiths}. We will demonstrate that the correct way to measure the \emph{effective strength of selection}, combining the effect of both rare and weak selection, is via the quantity
\begin{equation}\alpha_{\rm Eff}:=\alpha_{\mathfrak s}\alpha^*+w.
\label{eq:alphatot}
\end{equation}
Indeed, we will find (Theorem \ref{thm:XlongtermGriffiths}) a critical value $c\beta^*$ (see formula \eqref{eq:beta*})  for the total selection $\alpha_{\rm Eff}$, separating regimes leading to almost sure ultimate extinction ($\alpha_{\rm Eff}> c\beta^*$), from regimes where the weaker type 0 has a chance to survive and fixate ($\alpha_{\rm Eff}<c\beta^*$). This extends results by Foucart \cite{Foucart13} and Griffiths \cite{GriffithsLambda} who independently discovered this threshold for the case of classical weak selection.  While Foucart works with the branching-coalescing process of the population's genealogy, Griffiths introduces a helpful representation of the generator of the forward-in-time diffusion process of the allele frequencies and combines this with a Lyapunov-function approach. We propose (Section \ref{sec:ltr}) an extension of Griffiths' approach to find the critical value for Lambda Fleming-Viot models with frequency-dependent selection and random environment. A non-trivial step in this endeavour requires closing a gap in \cite{GriffithsLambda} (caused by an exchange of limit and integration not  justified {\em a priori}), which we have achieved with our Lemma \ref{lem:amazinggrace}. This Lemma, which also covers the case analysed by Griffiths, can be extended to include the models in \cite{ADSelection} and it may be of independent interest for uses even beyond population genetics.
\\

It will turn out that the critical value $c\beta^*$  for  $\alpha_{\rm Eff}$ does not depend on the selection parameters, nor on the environment. This has interesting and, to some extent, counter-intuitive implications: In a model with purely rare selective events $(w=0)$, the effective critical threshold for $\alpha_{\mathfrak s}$ to guarantee extinction is given by the ratio $c\beta^*/\alpha^*,$ which is never smaller than $c\beta^*.$ This implies that, given the same genetic drift, a higher minimum level of rare selective intensity $\alpha_{\mathfrak s}$ is required to guarantee extinction, compared to a model with purely constant weak selection ($\alpha_{\mathfrak s}=0$) for which the same $c\beta^*$ is still the actual threshold for $w$. Due to the form of the shape parameter $\alpha^*,$ it will be apparent that this is the case even for choices of environment
typically favouring large \lq\lq cataclysms\rq\rq, from which we would expect a faster pressure towards extinction. For a comparison among specific examples of rare selection mechanisms see Remark \ref{rem:geom_v_Bin}.\\

The analysis of the longterm effects is embedded in an extended analysis of aspects of the modelling of such cataclysims with further results that may be of independent interest and which we briefly describe here.

\subsection{Finite population model} To derive our generalised Lambda Fleming-Viot process, we will propose, in Section \ref{sec:selection_RE}, an individual-based construction, based on a \lq\lq random graph\rq\rq\ approach, first proposed in \cite{ADSelection}, namely: a graph with deterministic vertices, representing individuals at each generation, and random edges being thrown between any two vertices for which a potential parental relation exists (cf. Definition \ref{def:DASG} in Section \ref{subsec:DASG}). On this graph, a suitable notion of selection is defined, associated to the idea that, at each generation, every individual chooses a random number of potential parents from the previous generation and inherits the fittest type among those found in their sampled pool. Thus the larger the typical pool's size, the less likely it is to inherit the least fit type. In a population of size $N$, at each generation $g\in\Z$, selection is thus parametrised by the probability distribution $Q$, say, of the vector of pool sizes $K_g=\{K_{(g,i)}:i=1,\ldots,N\}$. The distribution of $Q$ will depend randomly upon time, through a sequence of $[0,1]$-valued random variables, representing the time-varying random environment. \\
The selection mechanism is thus fully specified in terms of the time-varying distribution of pool sizes of potential parents, but the actual impact of selection depends also on how each individual chooses each such pool, and this choice is in turn affected by whether or not, at any given generation, the population undergoes an extreme reproductive event (or Lambda-event). In generations without extreme reproductive events, all individuals will sample their own pool of potential parents independently at random; in those with extreme reproduction, choices made by distinct individuals may be correlated. The occurrence and size of extreme reproductive events at each generation will be modelled by an additional, independent sequence of $[0,1]$-valued random variables. \\

\subsection{Infinite population model}  We will then focus on the dynamics under an {\em i.i.d.} environment and, taking the population size $N$ to infinity and rescaling time suitably, we will prove weak convergence of both allele frequencies (Theorems \ref{lem:converges} and \ref{thm:convergence_rare_and_weak}) and ancestral processes (Theorem \ref{thm: ancestral converg}).   A brief description is the following.
The limit ancestral process $Z,$ counting the number of lineages ancestral, at any time, to a (sample from a) given generation, will evolve as a continuous-time Markov chain with state space $\N\cup\{\infty\}$, with positive jumps from $n$ to $n+k$ occurring at rate
\begin{equation}
\int_{[0,1]} \Prob{\sum_{j=1}^nK_{y,j}=n+k} \frac{\Lambda_{\mathfrak s} (dy)}{\Ex{K_{y,1}-1}}+ wn\delta_{1,k}
\label{eq:br}
\end{equation}
for $k \in \N$ and with negative jumps from $n$ to $n-k$ occurring at rate
\begin{equation}
c\int_{[0,1]}\binom{n}{k+1}  y^{k+1}(1-y)^{n-k-1} \frac{\Lambda_{\mathfrak c}(dy)}{y^2} + \sigma\binom{n}{2}\delta_{1,k}
\label{eq:co}
\end{equation}
for all $k\in\{1,...,n-1\},$ where $w\geq 0$ and $c\geq 0$ are constants, $\Lambda_{\mathfrak s}$ and $\Lambda_{\mathfrak c}$ are two finite measures with no atoms in $0$ 
and, for each $y\in[0,1],$
 $\{K_{y,j}\}_{j\in\N}$ is an \emph{iid} family of $\N\cup\{\infty\}$-valued random variables with common distribution $Q(y)$ parametrised by $y\in [0,1]$, where $Q(y)=\delta_1$ if and only if $y=0$. We will refer to  $Z=(Z(t):t\geq 0)$ as the \emph{branching-coalescing process in random environment (BCRE)} (cf. Definition \ref{def:BCPRE}). Positive jumps are driven by $\Lambda_{\mathfrak s}$ and correspond to a lineage branching off into one \emph{actual} and one or more \emph{virtual} parental lineages, as an effect of selective pressure, whereas negative jumps, driven by $\Lambda_{\mathfrak c}$, correspond to two or more lineages coalescing into a common ancestor, as an effect of pure genetic drift.
 The measures $\Lambda_{\mathfrak s}$ and $\Lambda_{\mathfrak c}$ are the intensities of two Poisson processes governing the occurrence and size of rare selective events (cataclisms) and of the extreme reproduction events, respectively.  Loosely speaking, for every $y\in[0,1]$, $Q(y)$ represents the distribution of the size of the pool of potential parents which a typical individual in the limit population will choose when the environment has value $y$. The choice of $Q$ is arbitrary as long as it satisfies a minimal key condition of integrability, as explained later on in Section \ref{sec:key cond}. The constant $w$ is responsible for the classical form of \emph{weak} genic selection, acting constantly and deterministically in time. Correspondingly, $\sigma$ is a weight associated to the classical Kingman dynamics (binary mergers, i.e. non-extreme reproduction).\\
Known standard genealogical processes are thus subsumed as particular cases of our model: in particular when $\Lambda_{\mathfrak s}$ and $\Lambda_{\mathfrak c}$ are null on $(0,1]$, the rates \eqref{eq:br} and \eqref{eq:co} describe the dynamics of the so-called Ancestral Selection Graph of Krone and Neuhauser \cite{KN}. \\
As for the forward-in-time allele frequency process, we will prove that a scaling limit approximation of the 0-allele frequency evolution is described by a jump-diffusion process $(X(t):t\geq 0)$ arising as the unique strong solution to the SDE
\begin{align}\label{eq:SDEX_intro}
\dd X(t) 	& = \int (\E[X(t^-)^{K_y}\,\mid\, X(t^-)]-X(t^-) )\ \mathcal N_{\mathfrak s}(\dd t,\dd y) - wX(t)(1-X(t))\dd t \notag\\
		& \qquad \qquad + c\int\int z(\1_{\{u \leq X(t-)\}} - X(t-))\ \mathcal N_{\mathfrak c}(\dd t, \dd z, \dd u) \notag \\
		& \qquad \qquad \qquad \qquad +\sqrt{\sigma  X(t)(1-X(t))}\dd B(t)
\end{align}
where:  $(B(t) : t\geq 0 )$ is a standard Brownian motion; $\mathcal N_{\mathfrak s}$ is a Poisson point measure on $[0,\infty[\times [0,1]$ with intensity $\dd s\otimes (\E[K_y-1])^{-1}\1_{]0,1]}\Lambda_{\mathfrak s}(dy)$, with $K_y\equiv K_{y,1}$, $\Lambda_{\mathfrak s}(dy)$ and $w$ exactly as in \eqref{eq:br};
$\mathcal N_{\mathfrak c}$ is a Poisson point measure on $[0,\infty[\times]0,1]\times]0,1[$ with intensity $\dd s\otimes z^{-2}\Lambda_{\mathfrak c}(\dd z)\otimes \dd u$, with $\Lambda_{\mathfrak c}(\dd z) $ exactly as in \eqref{eq:co}.  We will refer to the process $X$ as the \emph{two-type Fleming-Viot process with weak and rare selection (FVWRS)} (cf.\ Definition \ref{def:X_weak_and_rare}). For $c=0$ this process reduces to a model introduced by Bansaye, Caballero and M\'el\'eard in \cite{caballero}. The classical Wright-Fisher diffusion process with genic selection is obtained when $c=0$ and $\mathcal N_{\mathfrak s}([0,\infty[\times [0,1])=0$.\\
\subsection{Duality} The construction summarised above (and described in full details in Section \ref{subsec:DASG}) shows clearly how the dynamics (and fate) of the weak allele frequency is intrinsically linked to the dynamics of the ancestry of any sample from the population, which can be read off directly from the population's graph, as the connected component subtending the sample.\\
In models with selection in deterministic environment, a well-established method to study such a link is based on the notion of \emph{stochastic duality}, which has attracted the interest of an increasing number of researchers in a variety of areas of probability \cite{GKR, JK14,POS}, including population genetics \cite{seed,EG, ADSelection, HA07, M99}: If $X=(X(t):t\geq0)$ and $Z=(Z(t):t\geq 0)$ are two stochastic processes, respectively with state space $E$ and $F$,  then $X$ and $Z$ are said to be \emph{dual} to each other \emph{with respect to the duality function} $h:E\times F\to \R$ if, for every $(x,y)\in E\times F$,
\begin{equation}
\forall \, t\geq0:\qquad \Ex{h(X(t),y)\mid X(0)=x}=\Ex{h(x,Z(t))\mid Z(0)=y}.
\label{eq:dual}
\end{equation}
\\
We will extend the use of duality to analyse population models with time-varying random environment. We will first show in Lemma \ref{lem:consampleduality} that, for finite $N$, a \emph{quenched } version of the duality relation \eqref{eq:dual} holds conditionally on the environment's sample path, when $X$ describes the forward-in-time process of $0$-allele frequencies and $Z$ the corresponding backward-in-time ancestral process, tracking the number of surviving potential ancestral lineages. In this case, the duality function $h$ is an appropriate generalisation of the so-called \emph{sampling duality} function originally proposed in \cite{M99} and later adapted by \cite{ADSelection} to models with deterministic selection. 
For the case of {\em i.i.d.} environment, it turns out that samping duality also holds in an \emph{annealed} form (i.e. unconditionally on the environment), as will be shown in Proposition \ref{prop:iidduality}. In addition, as observed in \cite{M99} and given in Proposition \ref{prop:hypergeometricsampling}, more than one duality relation holds between $X$ and $Z$.\\

We will then prove in Lemma \ref{lem:duality} that sampling duality will converge to an \emph{annealed moment duality} relation between the two limit processes, i.e. one of the form \eqref{eq:dual} with the choice $h(x,n)=x^n$. \\ 

Although random environment was introduced quite a few decades back both in the literature of branching processes (e.g.\ \cite{AK71,SW}) and in population genetics models (\cite{KL73,KL74}), 
it is currently attracting a renewed interest in both communities (see  \cite{AGKV,BPCH,caballero,BH,GKV,GKV1,HLX} for branching processes and \cite{caballero,BEK, BH, HPP15} in the context of population genetics). 
 While the  present paper draws its main motivation from open problems about selection in population genetics, it is our opinion that the above-mentioned duality property is also interesting from the point of view of the theory of branching (and coalescing) processes in a random environment.  In the present work, Lemma \ref{lem:fellerZ} shows that not only does the key 
Condition \ref{MasterCondition} ensure strong existence of the solution of Equation \eqref{eq:SDEX_intro}, but it also implies conservativeness of the BCRE defined by \eqref{eq:br}-\eqref{eq:co}, and this time in presence of both coalescence and random environment. As a beautiful consequence of moment duality the solution of Equation \eqref{eq:SDEX_intro} is Feller, cf.\ Lemma \ref{lem:fellerX}.
 We believe that 
the results and methods in this paper could help to shed further light on the connections between these two families and it is plausible that several of the results presented here can be extended to other types of non-\emph{iid} environmental processes.\\

 \subsection{A key condition on the parameters of rare selection}\label{sec:key cond}
The action of {rare selection in random environment} is parametrised by two objects: the {rare selection mechanism} is given by a kernel $\Q=\{Q(y): y \in [0,1]\}$ where, for every $y$, $Q(y)$ is a distribution on $\N\cup\{\infty\}$; the {random environment} itself is determined by a measure $\mu$ on $[0,1]$, the space of parameters of $\Q$, whose one-to-one relation with $\Lambda_{\mathfrak s}$ we now explain. 
\\
As a central assumption for all our results, we will require 
that the kernel $\Q=\{Q(y): {y\in [0,1]}\}$ and the measure $\mu$ satisfy 
the following condition:
\begin{Condition}\label{MasterCondition}
The kernel $\Q$ and the measure $\mu$ satisfy
\begin{align}\label{eq:MasterCondition}
\int_{[0,1]}\;\E[K_y-1] \;\mu(\dd y) < \infty.
\end{align}
where $K_y$ is a ($\N\cup\{\infty\}$-valued) random variable with distribution $Q(y)$. \\
In addition, assume that $Q(y) = \delta_1$ if and only if $y=0$
\end{Condition}
\noindent Note that the  main part of Condition \ref{MasterCondition} is \eqref{eq:MasterCondition}. This is essential for most results, while the second part is mostly for simplicity. The results would still have analogous formulations where one would have to take into account that, e.g. singularities might appear for other values of $y$ that $y=0$. In addition, for the sake of simplicity we will also always assume that $Q(y) = \delta_1$ if and only if $y=0$. Note that Condition \ref{MasterCondition} implies the representation $\mu = \mu(\{0\})\delta_0 + \mu^+$ where $\mu^+$ has density $y \mapsto (\E[K_y-1])^{-1}\1_{]0,1]}(y)$ with respect to a finite measure which we denote by $\Lambda_{\mathfrak s}$. It will become apparent, however, that the atom in zero of $\mu$ has no effect on the selection mechanism, hence we will assume, without loss of generality, $\mu(\{0\}) = 0$. Since $\mu$ and $\Lambda_{\mathfrak s}$ are suitably equivalent, we will make use of both representations. 
\\
\ \\
The choice of $[0,1]$ as parameter space for $Q(y)$ is entirely arbitrary and one can replace it with any general parameter space ${\cal Y}$. The use of $[0,1]$ has the advantage of being rich enough and interpretable, encompassing most known models of non-balancing selection. For example, by setting $Q(y)$ to be the Geometric distribution on $\N$ with parameter $1-y$ for $y\in [0,1[$ and $Q(1)\equiv \delta_{\{\infty\}}$, and choosing $\Lambda_{\mathfrak s}= a\delta_{y^*},$ ($y^*\in]0,1]$) one recovers known population models with selection in constant environment: 
For $a=1$ and $y^*=0,$ the model  is neutral (no selection), in which case the ancestry is described by a coalescent with multiple collisions \cite{MS,pitman} with merger sizes governed by the parameter measure $\Lambda_{\mathfrak c}$ and the frequency process is the two-type Fleming-Viot process \cite{BLG03}. For $a=y^*\rightarrow 0$ the model specialises to the haploid weak selection model, whose genealogy is given by the Ancestral Selection Graph of \cite{KN} and its dual is the Wright-Fisher diffusion with weak selection \cite{Kimura}. If, in addition, $c=0$ 
the model reduces to Kingman's coalescent process \cite{Kingman} with the Wright-Fisher diffusion as 
 its dual \cite{Kimura}. With random environment (non-degenerate $\Lambda_{\mathfrak s}$), the duality property \eqref{eq:dual} has not been established before.
\\
The duality implies that the allele 0 will become extinct with probability one if and only if its ancestral process does not admit a stationary distribution, which happens when branching events occur at a sufficiently high rate to ultimately outperform the coalescence events. 
\\


\subsection{Outline of the paper}
The structure of this paper follows the approach of first summarising all important results while postponing the proofs to the last section. The results are presented in the order logical from a modeling point of view. The reader mainly interested in the result on the longterm behaviour (Theorem \ref{thm:XlongtermGriffiths}), may go directly to Section \ref{sec:longterm} (briefly visiting Section \ref{subsec:diffusionconvergence} for the existence of the object under study).
The paper is structured as follows. We begin, in Section \ref{subsec:DASG}, with the construction of the \emph{Wright-Fisher graph with selection in random environment} incorporating the possible occurrence of highly skewed offspring distributions (Lambda reproduction mechanism); in Sections \ref{sec:forward frequencies} and \ref{sec:discrete_genealogy} we will define the {frequency process of the $0$-allele} and the so-called {block-counting process of the ancestry of a sample}. Subsequently, in Section \ref{sec:discreteduality}, we prove the quenched and annealed sampling duality results. Section \ref{sec:scaling limit} is concerned with the scaling limits of the processes introduced before. \ref{subsec:diffusionconvergence} establishes the two-type Fleming-Viot process with selection $X$ as the scaling limit of the frequency process of the 0-allele, if the environment is chosen to be \emph{iid}. Section \ref{subsec:duality_convergence} begins with the duality of $X$ and the branching coalescing process in random environment $Z$. We prove the Feller property and conservativeness of $Z$ and deduce the Feller property of $X$ from this. Duality is then also used to prove that $Z$ arises as 
 the scaling limit of the block-counting process of the ancestry of the finite Wright-Fisher graph.
The long-term behaviour of the scaling limits and their dependence on the strength of selection is then analysed in Section \ref{sec:longterm} subdivided in the result for $X$ in Section \ref{subsec:longterm_X} and its translation through duality for $Z$ in Section \ref{subsec:ergodicity_Z}. All proofs are postponed to Section \ref{sec:proofs}, where they appear in the same order as previously the results.


\section{Modeling Selection in random environment}\label{sec:selection_RE} 

This section is devoted to the construction and analysis of an individual-based, finite-population model. It is based on a random graph describing the ancestral relations between the generations in the populations, extending \cite{ADSelection} to encompass randomly varying selection. \\
We first describe how selection, the random environment governing it and extreme reproductive events are parametrised in the graph. The population's forward-in-time allele frequency process (Section \ref{sec:forward frequencies})  and its backward-in-time genealogical process (Section  \ref{sec:discrete_genealogy}) then both arise as functions of the same graph.  This construction in particular then yields the (sampling) duality between the two processes (Section \ref{sec:discreteduality}).


\subsection{The discrete ancestral selection graph with random environment and skewed offspring distribution} \label{subsec:DASG}

Consider a population of fixed, finite size $N \in \N$, with discrete, non-overlapping generations indexed by $g \in \Z$.   
Denote: $[N]:=\{1,\ldots,N\}$ and $V_N:=\Z\times[N]$. Each individual in the history of the population is identified by a point $v=(g,i)\in V_N$, and we will write $g(v)=g$ and $i(v)=i$ to indicate, respectively, the \emph{generation} of $v$ and its \emph{label} in $[N]$. Negative and positive values of $g$ will then index past and future generations, respectively, with respect to an arbitrarily chosen \lq\lq present generation\rq\rq $g=0$. 
\\
\noindent  $V_N$ is the (deterministic) set of vertices of what is to become our random graph. The randomness, of course, lies in the ancestral relations: an edge will be drawn between any two vertices where one vertex is a potential parent of the other. \noindent 
The dynamics of the random graph will have to incorporate two features: selection in a random environment, and possibly skewed offspring distribution when extreme reproductive events occur. The former is determined by \emph{how many} potential parents an individual chooses, the latter by how the individuals choose the {\em labels} of their parents. Let us now describe the two mechanisms in more detail.\\

\textbf{Selection in random environment} 
At each generation $g$, we assign to every individual $v=(g,i)\in V$ a random number $K_{v}\in\N\cup\{\infty\}$, representing the number of potential parents it will choose - with replacement -  from among the $N$ individuals in generation $g-1$. The distribution of the collection $\{K_v:v\in V_N\}$ will be fully specified by
\begin{itemize}
 \item[(i)] the law of an environmental process, modelled as a sequence of random variables $\bar Y=(Y_g:g\in\Z)$ taking values in a measurable state space ${\cal Y}$; 
 \item[(ii)] a given probability kernel $\{Q(y,\cdot):y\in{\cal Y}\}$ from ${\cal Y}$ to $\N\cup\{\infty\}$; 
\end{itemize}
 and the following\vspace{-8pt}
\begin{ass}\label{ass:sel}
For every $g$, given $Y_g=y \in \calY,$ the random variables $K_{(g,i)}, i\in[N]$, are conditionally \emph{iid} with common distribution $Q(y,\cdot)$.\\
In addition, conditioned on a realisation of $\bar Y$, the random vectors $(K_{(g,i)}: i\in[N]), g \in \Z$, are \emph{independent.}
 \end{ass}
 \vspace{-10pt}
\noindent As mentioned in the introduction, throughout this paper we will mostly take ${\cal Y}=[0,1]$ for convenience. Note that we do not specify the law of the environmental process in this section. The scaling limit results in Section \ref{sec:scaling limit} will focus mainly on the assumption of the environment $\bar Y$ being an {\em iid} sequence.\\

\textbf{Highly-skewed offspring distribution}
\noindent The highly-skewed offspring distribution is modelled through correlation in how the individuals choose the labels of their potential parents from the previous generation. 
The strength of the correlation will be determined by $\Lambda$, a distribution on $[0,1]$ with $\Lambda(\{0\})=0$, and $c \in [0,1]$.   
Denote with $\mathcal U_{[N]}$ the discrete uniform distribution on $[N]$. \\
The choice of parental labels is then modelled as follows.
\vspace{-8pt}
\begin{ass}\label{ass:skewed}
Let $(C_g:g\in\Z)$ be \emph{iid} $[0,1]$-valued random variables with distribution 
\begin{equation*}
 (1-c)\delta_0 + c\Lambda 
\end{equation*}
 and $(I_g:g\in\Z)$ {\em iid} random variables with distribution $\mathcal U_{[N]}$, such that $(C_g:g\in\Z)$, $(I_g:g\in\Z)$ and $((K_{(g,i)}: i\in[N]): g \in \Z)$ are independent.\\
At each generation $g\in \Z$, given realisations $C_g = a$, $I_g=j$ and $(K_{(1,g)}, \ldots, (K_{(N,g)})=(k_{(1,g)}, \ldots, k_{(N,g)})$ the labels of the $\sum_{i=1}^N k_{(g,i)}$ potential parents of all individuals in that generation are chosen with replacement from $[N]$ as \emph{iid} random variables with distribution
\begin{equation}
a\delta_{j}+(1-a)\mathcal U_{[N]}.\label{eq:genrepr}
\end{equation}
In addition, conditioned on a realisation of 
$((K_{(g,i)}: i\in[N]): g \in \Z)$, the choices of labels across generations are also independent.
\end{ass}
In other words, the whole population at generation $g$, collectively chooses a distribution ${Q}(Y_g)$, a strength of correlation $C_g$ and marks a special label $I_g$ picked uniformly from $\{1,\ldots,N\}$. Then every individual $v=(i,g)$ will independently sample its number of choices for potential parents $K_{v}$ according to the same $Q(Y_g)$ and subsequently flip $K_{v}$ \emph{iid} coins with a random bias $C_g$. Whenever any such coin returns a success, the $I_g$-th individual from generation $g-1$ is chosen as a potential parent for $v$; for all coins returning failure, the potential parent is picked uniformly at random, independently of all other random variables. \\
By construction, ultimately each individual potential parent is, marginally, uniformly chosen from $[N]$. \\
Notice that, since choices are made \emph{with replacement}, 
the actual number of \emph{distinct} potential parents, chosen by an individual $v\in V_N$, might be  less than $K_v$. Repeated choices will, however, be invisible to the graph, i.e. no multiple edges will be drawn between any two distinct vertices.
We assume that all the above random variables are defined on the same underlying probability space $(\Omega,{\cal F},\mbb P)$. We denote by $[w,v]$ a directed edge from $w$ to $v$, for  $w,v \in V_N$ and by ${\mathfrak N}$ the Borel sigma-algebra of $\N\cup\{\infty\}$ under the discrete topology.

With this, we obtain the definition of the \emph{Wright-Fisher graph with selection in random environment and skewed offspring distribution}. 

\begin{defn}\label{def:DASG} For every $N \in \N$ let $\bar Y = (Y_g)_{g \in \Z}$ be a $[0,1]$-valued process, $\Q:[0,1]\times {\mathfrak N}\to [0,1]$ a probability kernel, $c \in [0,1]$, and $\Lambda$ a distribution on $[0,1]$. 

\noindent The \emph{Wright-Fisher graph with selection $\Q$ in random environment $\bar Y$ and skewed offspring distribution governed by $c$ and $\Lambda$} -- denoted WF$(N,\bar Y, \Q, c, \Lambda)$ -- is given by the graph 
$$G_N=G_N(\omega)=(V_N,E_N(\omega)),\ \ \ \omega\in\Omega,$$
 with deterministic set of vertices $V_N$, and {random} set of edges $E_N=E_N(\omega)$ formed by the rule:
\emph{$[u,v]$ is an edge of $E_N$ if and only if $u$ is a potential parent of $v$, where potential parents are chosen according to Assumptions \ref{ass:sel} and \ref{ass:skewed}. }
\end{defn}
On a given Wright-Fisher graph WF$(N,\bar Y, \Q, c, \Lambda)$, we now introduce the two-point type space $\{0,1\}$, where $0$ will be the \emph{weak} (less fit) type and $1$ the \emph{strong} (fitter) type. The population's allele frequencies dynamics and the corresponding backward-in-time genealogy can then both be obtained as functions of the same WF$(N,\bar Y, \Q, c, \Lambda)$ graph as we see in the following sections.


\subsection{Inheritance and two-type allele frequencies dynamics}\label{sec:forward frequencies}We will assign arbitrarily types $0$ or $1$ to all the individuals in a fixed generation $g_0 \in \Z$, chosen to be the starting generation of our process.\\
The mechanism of inheritance is modelled as follows: in each subsequent generation, every individual will take on the type of its \emph{fittest} potential parent, that is: it will inherit type 0 if and only if \emph{all} its potential parents are of type 0. Clearly, this mechanism describes the advantage of the fitter type 1 and selection is fully explained in terms of multiple parents choice.
\\ This rule assigns types to all vertices in $\{v \in V_N \mid g(v)\geq g_0\}$. Let $\xi(v) \in \{0,1\}$ denote the type of $v$ and note that for every $g>g_0$ the vector of types $(\xi(g,i): i \in [N])$ is exchangeable. Define $[N]_0:=\{0, \ldots, N\}$, $[N]_0/N:=\{0, 1/N, \ldots, 1\}$. 

\begin{defn}[Allele-frequency process]\label{def:discrete_frequency}
The $0$-allele frequency process started in $g_0\in \Z$ in a Wright-Fisher graph \emph{WF($N,\bar Y,\Q, c, \Lambda$)} is the $[N]_0/N$-valued process $X^{N, g_0}=(X^{N, g_0}(g):g\geq g_0)$ describing the proportion of $0$-alleles in each generation $g \geq g_0$:
 \begin{align*}
  X^{N,g_0}(g):= \frac{1}{N}\sum_{\{v:g(v)=g\}} (1-\xi(v)),\ \ g\geq g_0.
 \end{align*}
\end{defn}
The forward-in-time dynamics of the allele frequency process can be read off quite immediately from the graphical construction. For every $y\in[0,1]$, denote by $\varphi_y$ the probability generating function of the distribution $Q(y,\cdot):$
\begin{equation*}
 \varphi_y(x):=\sum_j Q(y,\{j\})x^j.
\end{equation*}

\begin{prop}\label{pr:discretefreq} Conditionally on a realisation of the environment $\bar Y=\bar y\in[0,1]^{\Z}$ the frequency process
$X^{N,g_0}$ is a $[N]_0/N$-valued time-inhomogeneous Markov chain with one-step transition probabilities
\begin{align}
\label{eq:discretefreq}
\P&\left(X^{N,g_0}(g+1)=z\mid X^{N,g_0}(g)=x, \bar Y=\bar y\right)\\
			 & = (1-c){\rm Bin}(Nz\mid N, \varphi_{y_g}(x))
 +c \ \E\left[{\rm Bin}\Big(Nz\mid\ N,\varphi_{y_g}\big(VB_x + (1-V)x\big)\Big)\right].\notag \end{align}
for any $z,x\in[N]/N$, where: ${\rm Bin}(\cdot\mid m,x)$ is the binomial probability mass function with parameter $m,x$; $V \sim \Lambda$; $B_x$ is a Bernoulli random variable with success parameter $x$; and $V$ and $B_x$ are mutually independent. 
\end{prop}
The detailed explanation can,  of course, be found in the proof in Section \ref{subsec:proofs_discrete}, but let us briefly shed some light on the role of the probability generating function in this context: Assume a generation $g$ without skewed offspring distribution. Given $Y_g=y_g\in [0,1]$ as the realisation of the random environment in that generation the individual $(g,1)$ samples the number $K_{(g,1)}$ of choices of potential parents according to $Q(y_g, \cdot)$  and then independently samples as many potential parents from the previous generation. Given that the frequency of weak 0-type is $x$ in the previous generation, the probability for each choice to be of the weak 0-type is $x$. Since this individual will only be of the weak 0-type, if \emph{all} its choices are of the weak 0-type, the probability of this event is precisely $\E[x^{K_{(g,1)}}]=\varphi_{y_g}(x)$.
 
\begin{rem}\label{rem:geometric_important}
Our graph encompasses several models of allele frequency evolution already known in the population genetics literature. Consider $\mathcal Y=[0,1]$. Models without skewed offspring distribution are those where $c = 0$. The classical neutral Wright-Fisher model, for example, is obtained by setting $c = 0$ and $Q(y,\{1\})=1$ (i.e. $Q(y, \cdot) = \delta_{1}$)  for every $y\in[0,1]$. Indeed, in this case the environment does not affect random mating and every individual choose its only one parent independently, uniformly at random, so that  $\varphi_y(x)=x$ for every $x,y$ and the probability \eqref{eq:discretefreq} becomes Binomial with parameter $x$. \\
Focusing still on the case $c = 0$,
a key role is played by the choice of geometric kernels $Q$ given, for every $y\in[0,1[$, by 
\begin{equation*}
 Q(y, \cdot)=\sum_{i=1}^\infty y^{i-1}(1-y)\delta_i=:\text{Geo}(1-y) 
\end{equation*}
and $Q(1, \cdot):=\delta_{\{\infty\}}$. Indeed, with such a choice of $Q$, if $\bar Y$ is taken to be a constant, deterministic process, that is, for some $y\in[0,1[,$ $Y_g=y$ for every $g\in\Z$, then $X^{N,g_0}$ reduces to the ordinary Wright-Fisher model with selection parameter $y$ (see \cite{ADSelection}, Example 2.3) and the neutral Wright-Fisher model corresponds to the case $y=0$. If, with the same choice of $Q$, the environment $\bar Y=(Y_g:g\in \Z)$ is the $N$-th instance of a sequence of processes in the domain of attraction a spectrally negative L\'evy process $\bar Y$, then it can be shown that the frequency process $X^{N,g_0}$ falls within a class of Wright-Fisher models with selection with random environment recently introduced in \cite{caballero}. 
 \end{rem}


\subsection{Backward-in-time ancestral process}\label{sec:discrete_genealogy}
 We now turn to defining the ancestral process arising from a WF($N,\bar Y,\Q, c,  \Lambda$) graph.
An individual $(g(v)-r, i) \in V_N$ living $r\geq 1$ generations before $v$ is called a \emph{potential ancestor of $v$}, if there exists a path  in the graph $(V_N,E_N)$ connecting $v_0:=(g(v)-r, i)$ to $v_r:=v$, that is: there exist $v_1,\ldots,v_{r-1}\in V_N$ such that $[v_l, v_{l+1}]\in E_N$ for every $l=0,\ldots, r-1$. 
Denote by $A^N(v)$ the set of \emph{all} such ancestors of $v$. 
For $n\in[N]$, the  \emph{potential ancestry of the sample $v_1, \ldots, v_n$} is the set $A^N(v_1, \ldots, v_n):=\bigcup_{j=1}^n A^N(v_j)$.\\
Now, given an arbitrary collection $\bar v=\{v_1, \ldots, v_n\} \subseteq \{g_0\}\times [N]$  of distinct individuals chosen from generation $g_0\in\Z$, the \emph{potential ancestors of the sample $\{v_1, \ldots, v_n\} $ alive $l$ generations back in time} are the vertices in the set
 \begin{align*}
  A^{N}_{l}(\bar v):=\{w \in A^{N}(v_1, \ldots, v_n) \mid g(w) = g_0-l \}.
 \end{align*}
 Thus $A^N(\bar v)=\bigcup_{l=1}^\infty A^N_{l}(\bar v)$. \\ 
 The following function of the ancestry will play a key role.
\begin{defn}\label{def:discrete_genealogy}
Let $\bar v \subseteq \{g_0\}\times [N]$ be a sample of individuals from generation $g_0$. The \emph{block-counting process of the potential ancestry of $\bar v$} is the $[N]_0$-valued process $Z^{N,g_0}_{\bar v}= (Z^{N,g_0}_{\bar v}(g): g \leq g_0)$ defined by $ Z^{N,g_0}_{\bar v}(g_0):=|\bar v|$ and 
\begin{align*}
 Z^{N,g_0}_{\bar v}(g):= \vert A^N_{g_0-g}(\bar v)\vert,\ \ \ g\leq g_0,
\end{align*}
where $|A|$ denotes the cardinality of the set $A$.\end{defn}
The block-counting process thus tracks the number of \emph{distinct} potential ancestors of a sample and forgets their labels. 
Note that the distribution of $Z^{N,g_0}_{\bar v}$ actually depends on $\bar v$ only through the sample size $|\bar v|$. We omit the subscript $\bar v$ when there is no danger of confusion.\\
Conditionally on a realisation of the environment $\bar Y=\bar y\in[0,1]^{\Z}$ both the ancestral process $A^N_{g_0-\bullet}(\bar v)$ and its  block-counting process $Z^{N,g_0}_{\bar v}$ are time-inhomogeneous Markov chains.


\subsection{Sampling dualities}\label{sec:discreteduality}

As expected from the construction, the processes introduced in Definition \ref{def:discrete_frequency} and \ref{def:discrete_genealogy} have a duality relation determined by the graph. Indeed, we will identify two dualities (for the same processes): in the first case, the duality function is related  to generating functions, in the second, the duality function is given by a hypergeometric function. Both are so-called \emph{sampling-dualities}, where the duality function is given by a suitable sampling  probability as will be explained in detail below. See \cite{M99} for sampling-dualities for the basic Wright-Fisher  model. \\ 
Note that both processes $X^{N, g_0}$ and $Z^{N, g_0}$ need the additional time-parameter $g_0$ to indicate where they are anchored with respect to the absolute time, i.e.\ the time of the random environment $\bar Y=(Y_g)_{g \in \Z}$. Observe also, that the quantities $X^{N, g_0}(g)$ and $Z^{N, g_0}(g)$ actually depend on the environment process $\bar Y$  only through its coordinates between starting and current time i.e. through the variables  $Y_{g_0+1}, \ldots, Y_{g}$ and $Y_{g+1}, \ldots, Y_{g_0}$ respectively. 

Recall that we denote  by $\varphi_y$ the probability generating function of the distribution $Q(y,\cdot)$, i.e. $\varphi_y(x):=\E[{x^{K_y}}]$ if $K_y \sim Q(y, \cdot)$.
Let $V \sim \Lambda$ and for each $x \in [0,1]$ let $B_x$ be Bernoulli with success parameter $x$ and let $V$ and $B_x$ be independent. With this, define the function $H: [0,1]\times\N_0\times\Z\times [0,1]^{\Z} \rightarrow [0,1]$ by
\begin{align}\label{eq:conditional_sampling_duality_function}
 H(x,n; g, \bar y ):=(1-c)(\varphi_{y_g}(x))^n + c\ \E\left[\Big(\varphi_{y_g}\big(VB_x + (1-V)x\big)\Big)^n\right], 
\end{align}
which will be our (first sampling) duality function in the following
\begin{lemma}[Quenched sampling duality]\label{lem:consampleduality}
Let $X^N$ and $Z^N$ be, respectively, the $0$-allele frequency process and the block-counting process of a Wright-Fisher graph \emph{WF($N,\bar Y,\Q, c, \Lambda$)}.\\	
\noindent Let $H$ be the function given in \eqref{eq:conditional_sampling_duality_function}.
\noindent Then, for all $x \in [N]_0/N$, $n \in [N]_0$ and $r,s \in \Z$ with $r < s $,  $\P$-a.s.
\begin{align}
\E\Big[H\Big(X^{N,r}(s-1),n;& s,\bar Y\Big) \mid X^{N,r}(r)=x,\,\bar Y\Big] \label{eq:sdual}\\
						& = \E\Big[H\Big(x,Z^{N,s}(r+1); r+1,\bar Y\Big)\mid Z^{N,s}(s)=n, \,\bar Y\Big].\notag 
\end{align}
\end{lemma}

\begin{rem}\label{rem:observations_on_sampling_duality} Let us give some  context to this result.
\begin{enumerate}
 \item 
The equality \eqref{eq:sdual} establishes a quenched version of \ ``sampling duality'', where by quenched we mean conditional on the realisation of the environment $\bar Y$. The name refers to the fact that the duality function $H(x,n,g; \bar y)$ is precisely the conditional probability, given $\bar Y=\bar y=(y_g)_{g \in \Z}$, of all individuals in a sample of size $n$ individuals from generation $g$ are of the weak type 0, given that the $0$-type frequency was $x$ in the previous generation. This will be more apparent in the proof of Lemma \ref{lem:consampleduality} in Section \ref{subsec:proofs_discrete}. It is a direct consequence of the graph-construction. 
 \item For $c=0$, i.e. with no {skewed offspring distribution}, and any choice of $\mathbb Q$ and $\bar Y$ resulting in  $K_v= 1$ almost surely for every $v\in V_N$, the duality function in \eqref{eq:conditional_sampling_duality_function} is the moment-function $H(x,n,g; \bar y)=x^n$ and the equality \eqref{eq:sdual} reduces to the well-known \emph{moment duality} between the $N$-finite Wright-Fisher allele frequency process and the corresponding block-counting process of the genealogy \cite{M99}.
 \item For arbitrary $c$ and $\mathbb Q$, but constant, deterministic environment, the \emph{sampling duality} for Wright-Fisher graphs with selection was proved in Proposition 2.9 in \cite{ADSelection}. 
  \item In its general form, the duality function $H$ is time-dependent, but only through the environment process. However some specific choices of environment process (e.g. \emph{iid} environment) induce convenient forms of time-homogeneity. See also Proposition \ref{prop:iidduality} below.
  \item Since both processes $X^N$ and $Z^N$ are constructed on the same random graph, the duality stated in Lemma \ref{lem:consampleduality} is of strong type, i.e. can be made sense of ``almost surely''. See also \cite{ADSelection}, Remark 2.10. 
 \end{enumerate}
\end{rem}

The next Proposition shows that a similar duality property holds in an annealed form (i.e.\ unconditionally on the environment $\bar Y$) when the environment is described by an \emph{iid} process. With the choice of an \emph{iid}  environment, the distributions of $X^N$ and $Z^N$ do not depend on the specific choice of starting time $g_0$, whence we will normally assume, without loss of generality, that $g_0=0$ and omit the corresponding superscript. Write
\begin{equation*}
 \P^n(\,\cdot\,) := \P( \,\cdot\,\mid Z^{N,g_0}(g_0) = n)\quad \text{ and }  \quad \P_x(\, \cdot\,):= \P(\,\cdot\,\mid X^{N, g_0}(g_0)=x)
\end{equation*}
for any $x \in [N]_0/N$ and $n \in [N]_0$.

Again, let $V \sim \Lambda$ and for each $x \in [0,1]$ let $B_x$ be Bernoulli with success parameter $x$, independent of $V$. If the random environment $\bar Y=(Y_g)_{g \in \Z}$ is a sequence of \emph{iid} random variables with common distribution $\mu$, define the function $H_{\mu}: [0,1]\times\N \rightarrow [0,1]$ by
\begin{align}\label{eq:iid_duality_function}
 H_{\mu}(x,n)&:= \mathbb{E}[H(x,n;0,\bar Y)]	\notag\\
		& =(1-c)\E[\varphi_{Y_0}(x)^n]   + c\ \E\left[\varphi_{Y_0}\big(VB_x + (1-V)x\big)^n\right],
\end{align}
where $H$ is the duality function defined  in \eqref{eq:conditional_sampling_duality_function} and used in Lemma \ref{lem:consampleduality}.

\begin{prop}\label{prop:iidduality}
 Assume that the random environment $\bar Y=(Y_g)_{g \in \Z}$ is a sequence of \emph{iid} random variables with common distribution $\mu$. Let $H_{\mu}$ be the function given in \eqref{eq:iid_duality_function}
Then for all $x \in [N]_0/N$, $n \in \N_0$ and $g\geq 0$
\begin{align}
\E_{x}\left[H_{\mu}\left(X^{N,0}(g),n\right)\right] = \E^{n}\left[H_{\mu}(x,Z^{N,0}\left(-g)\right)\right].
\label{eq:ucdual}
\end{align}
\end{prop}

This proposition is interesting in its own right,  but will also be helpful in proving the convergence of the scaling limits of the ancestral process in Theorem \ref{thm: ancestral converg}. Note that analogous observations to those in Remark  \ref{rem:observations_on_sampling_duality} also hold  for $H_{\mu}$ defined in \eqref{eq:iid_duality_function} and Proposition \ref{prop:iidduality}.

As in \cite{M99} we  can also observe a second duality from the graphical construction. For evey $N \in \N$,  define a function $\tilde H_N: [0,1]\times\N_0 \rightarrow [0,1]$ as
\begin{align}\label{eq:diskretesamplingdualitywithoutreplacemen_functiont}
 \tilde H_N(x,n):=\prod_{k=1}^n\frac{xN-k+1}{N-k+1}.
\end{align}

\begin{prop}[Hypergeometric sampling duality]\label{prop:hypergeometricsampling}
 Let $X^N$ and $Z^N$ be, respectively, the $0$-allele frequency process and the block-counting process of a Wright-Fisher graph \emph{WF($N,\bar Y,\Q, c, \Lambda$)}.\\	
\noindent Let $\tilde H_N$ be the function given in \eqref{eq:diskretesamplingdualitywithoutreplacemen_functiont}.
\noindent Then, for all $x \in [N]_0/N$, $n \in [N]_0$ and $r,s \in \Z$ with $r < s $, 
\begin{align}\label{eq:diskretesamplingdualitywithoutreplacement}
\E\Big[\tilde H_N\Big(X^{N,r}(s),n\Big) \mid X^{N,r}(r)=x\Big] = \E\Big[\tilde   H_N\Big(x,Z^{N,s}(r)\Big)\mid Z^{N,s}(s)=n\Big].
\end{align}
\end{prop}

In this duality, the function does not depend on the random environment even without the assumption of an \emph{iid}  environment. The effect of the random environment is present solely through its effect on the distribution of $X^{N,r}(s)$ and $Z^{N,s}(r)$, respectively. In contrast, the duality function given in \eqref{eq:conditional_sampling_duality_function} does not depend on the population size $N$. 

As described in \cite{M99},  both dualities are to be expected when we have a Wright-Fisher graph and both are \emph{sampling dualities} in the sense that the duality function can be expressed as the probability of sampling $n$ weak individuals. In the case of  \eqref{eq:conditional_sampling_duality_function}, we consider this event conditionally on the frequency of weak alleles in the \emph{previous} generation being $x$, thus giving rise to a ``sampling with replacement'' which yields the probability geerating functions, while in the case of \eqref{eq:diskretesamplingdualitywithoutreplacemen_functiont}, the event is considered conditionally on the frequency of weak alleles in the \emph{same} generation. This also explains the explicit appearance of the random envronment in \eqref{eq:conditional_sampling_duality_function}, but not in \eqref{eq:diskretesamplingdualitywithoutreplacemen_functiont}. For more details see the proofs of the statements in Section \ref{subsec:proofs_discrete}.

\section{Scaling limit processes}\label{sec:scaling limit}

 From now on we will focus solely on models with \emph{iid} random environment and always consider annealed results, i.e. we do not condition on the random environment, but rather average it out.

\subsection{Forward scaling limit in \emph{iid}\ environment: rare selection}\label{subsec:diffusionconvergence}
 
 We will study existence of a continuous-time Markov process $X=(X(t) : t \geq 0)$ defined as the solution to the SDE
\begin{align}\label{eq:SDEX}
\dd X(t) 	& = \int (\E[X(t^-)^{K_y}\,\mid\, X(t^-)]-X(t^-) )\ \mathcal N_{\mathfrak s}(\dd t,\dd y) - wX(t)(1-X(t))\dd t \notag\\
		& \qquad \qquad + c\int\int z(\1_{\{u \leq X(t-)\}} - X(t-))\ \mathcal N_{\mathfrak c}(\dd t, \dd z, \dd u) \notag \\
		& \qquad \qquad \qquad \qquad +\sqrt{\sigma X(t)(1-X(t))}\dd B(t)
\end{align}
where  $(B(t) : t\geq 0 )$ is a Brownian motion, $\mathcal  N_{\mathfrak c}$ is a Poisson point measure on $[0,\infty[\times]0,1]\times]0,1[$ with intensity $\dd s\otimes z^{-2}\Lambda_{\mathfrak c}(\dd z)\otimes \dd u$, where $\Lambda_{\mathfrak c}$ is a distribution on $[0,1]$ with no atom in 0, $c, w \geq 0$, $\mathcal N_{\mathfrak s}$ is a Poisson point measure on $[0,\infty[\times [0,1]$ with intensity $\dd s\otimes\mu(\dd y)$ and  $K_y$ is an $\N\cup\{\infty\}$-valued random variable with distribution $Q(y)$ parametrised by $y\in [0,1]$.
\\Note that, if $X$ exists,  the compensator of $\mathcal  N_{\mathfrak c}$ is zero and Condition \ref{MasterCondition} guarantees that the compensator of $\mathcal  N_{\mathfrak s}$ is finite. As a result,  $X$ solves the martingale problem associated with the generator $\mathcal A$,  given by
\begin{align}\label{eq:generatorX}
\mathcal Af(x)	& =\int_{[0,1]}\left[ f(\E[x^{K_y}])-f(x)\right]\ \mu(\dd y) - wx(1-x)f'(x) \notag \\
		& \qquad +c\int_{[0,1]} \big\{x\left[f(x(1-z)+z)-f(x)\right]+(1-x)\left[f(x(1-z))-f(x)\right]\big\}\frac{1}{z^2}\Lambda_{\mathfrak c}(\dd z) \notag \\
		& \qquad \qquad +\sigma x(1-x)\frac{f''(x)}{2}
\end{align}
for every $C^2$ function $f:[0,1]\rightarrow \R.$

\begin{lemma}\label{lem:SDEXsolution}
Assume Condition \ref{MasterCondition}. Then there exists a unique strong solution to \eqref{eq:SDEX}.
\end{lemma}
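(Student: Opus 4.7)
The plan is to combine the standard Yamada--Watanabe programme (pathwise uniqueness plus weak existence yields strong existence) with the specific treatment needed for the novel selection jump term $\int(\E[X(t^-)^{K_y}\mid X(t^-)]-X(t^-))\,\mathcal N_{\mathfrak s}(\dd t,\dd y)$. The other three terms of \eqref{eq:SDEX} fit directly into the Dawson--Li / Bertoin--Le~Gall framework for $\Lambda$-Fleming--Viot SDEs, so the core new work is showing that, under Condition~\ref{MasterCondition}, the selection jumps are of integrable variation and Lipschitz in an appropriate sense.

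First, I would check that each term is well-defined on $[0,1]$. For any $x\in[0,1]$, the elementary bound $|x^k-x|\le (k-1)x(1-x)\le k-1$ for integer $k\ge 1$ gives
\begin{equation*}
|\varphi_y(x)-x|\le \E[|x^{K_y}-x|]\le \E[K_y-1],
\end{equation*}
where $\varphi_y$ is the pgf of $Q(y)$. Thus, by Condition~\ref{MasterCondition}, the cumulative selection-jump variation over $[0,T]$ has finite expectation $T\int\E[K_y-1]\,\mu(\dd y)$, so the term is a well-defined finite-variation càdlàg process (no compensation required). The multiple-merger term is the standard Donnelly--Kurtz/Bertoin--Le~Gall form, and the diffusive and drift terms are classical.

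Second, for pathwise uniqueness, I would take two $[0,1]$-valued solutions $X,Y$ driven by the same $(B,\mathcal N_{\mathfrak s},\mathcal N_{\mathfrak c})$, apply Itô's formula to the Yamada--Watanabe smoothings $\phi_n$ of $|x|$ and bound each contribution: the drift $-wx(1-x)$ is Lipschitz on $[0,1]$; the diffusive coefficient $\sigma\sqrt{x(1-x)}$ is $\tfrac12$-Hölder, giving the standard vanishing Yamada--Watanabe bracket contribution; and the $\Lambda_{\mathfrak c}$-jump kernel $z(\1_{\{u\le x\}}-x)$ gives, after symmetrising in $u$, an estimate of the form $C_{\Lambda_{\mathfrak c}}|X-Y|$ exactly as in Dawson--Li. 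For the selection jumps, observe that $\varphi_y'(x)=\E[K_y x^{K_y-1}]$ satisfies
\begin{equation*}
|\varphi_y'(x)-1|\le \E[K_y-1]\qquad (x\in[0,1]),
\end{equation*}
so by the mean-value theorem $|(\varphi_y(x_1)-x_1)-(\varphi_y(x_2)-x_2)|\le \E[K_y-1]\,|x_1-x_2|$; integrating against $\mu(\dd y)$ yields a finite Lipschitz constant \emph{precisely} by Condition~\ref{MasterCondition}. Gronwall then gives $\E|X(t)-Y(t)|=0$ for all $t$.

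Third, for weak existence I would truncate: set $\mu_\epsilon:=\mu|_{\{y:\E[K_y-1]>\epsilon\}}$, which is a finite measure. The corresponding truncated SDE has finitely many selection jumps on any bounded interval and falls within the Dawson--Li existence theory for $\Lambda$-Fleming--Viot with finite-rate perturbations; a solution $X^\epsilon$ exists for each $\epsilon>0$. Tightness of $\{X^\epsilon\}_\epsilon$ in $D([0,\infty),[0,1])$ follows from boundedness of the state space and Aldous' criterion, using the uniform $L^1$-bound on total jump variation given by Condition~\ref{MasterCondition}; any limit point solves the martingale problem associated with the generator $\mathcal A$ in \eqref{eq:generatorX} because $\int(\varphi_y(x)-x)\,\mu_\epsilon(\dd y)\to\int(\varphi_y(x)-x)\,\mu(\dd y)$ by dominated convergence. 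Combining weak existence with the pathwise uniqueness of the second step, the Yamada--Watanabe theorem in its jump-SDE form (Kurtz, or Barczy--Li--Pap) delivers a unique strong solution. The principal obstacle is the Lipschitz estimate for the selection jumps; everything else is a variation on known arguments, and it is here that Condition~\ref{MasterCondition} enters essentially.
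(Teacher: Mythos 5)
Your proposal is correct, but it takes a more hands-on route than the paper. The paper disposes of the whole Yamada--Watanabe programme in one stroke by invoking Theorem~5.1 of Li and Pu (\emph{Strong solutions of jump-type stochastic equations}, cited as \cite{LiPu}), which is precisely a packaged pathwise-uniqueness-plus-weak-existence result for SDEs of this form; the proof then reduces to verifying that theorem's hypotheses (their conditions 3a, 3b, 5a) for the coefficient functions $\sigma(x)=\sqrt{\sigma x(1-x)}$, $b(x)=wx(1-x)$, $g_0(x,(z,u))=z(\1_{[0,x]}(u)-x)$ and $g_1(x,y)=\E[x^{K_y}-x]$. The decisive estimate in both arguments is the same: $\bigl|\tfrac{\partial}{\partial x}(\E[x^{K_y}]-x)\bigr|=\bigl|\E[K_yx^{K_y-1}-1]\bigr|\le\E[K_y-1]$, whence the selection coefficient is Lipschitz with constant $\int\E[K_y-1]\,\mu(\dd y)$, finite exactly by Condition~\ref{MasterCondition} --- this is your mean-value-theorem step and the paper's verification of 3a). (Note the lower bound in that derivative estimate is not entirely trivial: it uses that the integrand vanishes on $\{K_y=1\}$ and that $\P(K_y\ge 2)\le\E[K_y-1]$; both you and the paper state it without comment, and it is true.) What your reconstruction buys is independence from the precise formulation of Li--Pu --- in particular your truncation $\mu_\epsilon=\mu|_{\{\E[K_y-1]>\epsilon\}}$ and tightness argument for weak existence, and the smoothing argument for pathwise uniqueness, would survive in settings not literally covered by their theorem. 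What it costs is length, and two small points you pass over quickly: the preservation of the state space $[0,1]$ (the paper builds this in by multiplying all coefficients by $\1_{[0,1]}(x)$), and the fact that the paper additionally verifies a quadratic bound $\int g_1(x,y)^2\,\mu(\dd y)\le x\int\E[K_y-1]\,\mu(\dd y)$ for condition 5a), which your version of the argument replaces by boundedness of the state space. Neither point is a genuine gap.
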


\begin{defn}\label{def:X_weak_and_rare}
 For a distribution $\Lambda_{\mathfrak c}$ on $[0,1]$ with $\Lambda_{\mathfrak c}(\{0\})=0$, a measure $\mu$ on $[0,1]$ and a kernel $\Q$ satisfying Condition \ref{MasterCondition} and constants $w, c, \sigma \geq 0$, define the \emph{two-type Fleming-Viot process with weak and rare selection (FVWRS) parametrised by $\Q$, $\mu$ and $w$ and $\Lambda_{\mathfrak c}$, $c$ and $\sigma$} as the unique strong solution $X=(X(t):t\geq 0)$ to \eqref{eq:SDEX}.
\end{defn}

We have now all the elements to prove the convergence of the 0-allele frequency process in the Wright-Fisher graph to the solution $X=(X(t):t\geq 0)$ of \eqref{eq:SDEX}. For clarity, we will first focus on the case of pure \emph{rare} selection, i.e.\ with $w=0$. In the pre-limit model, the parameters $\mu,c$ and the random environment process $\bar Y$ will now be indexed by the corresponding population size $N$: $\mu_N, c_N, \bar Y^N$. We will choose an \emph{iid}\ environment with a distribution $\mu_N$ assigning a large weight on the event of \emph{no selection} (hence we have rare selection) but such that, when selection occurs, its strength does not scale with $N$. Recall that we consider annealed results, here, i.e. not conditional on the environment, but rather averaged over it. 

\begin{thm}\label{lem:converges}
Let $(\rho_N)_{N\in\N}$ be a sequence of positive numbers converging to 0.\\
For a constant $\sigma \in [0, \infty[$ assume 
\begin{align}\label{eq:assumption_for_WF}
 \lim_{N\rightarrow \infty}\rho_N^{-1} N^{-1}=\sigma.
\end{align}
Let $\Lambda_{\mathfrak c}$ be a distribution on $[0,1]$ with $\Lambda_{\mathfrak c}(\{0\})=0$ and $(c_N)_{N \in \N}$ a sequence in $[0,1]$ converging to 0.
For an $\alpha \in (0,1/2)$ define
\begin{align*}
 \Lambda^{\alpha}_N(\dd u):= u^{-2}\1_{[N^{-\alpha}, 1]}(u)\Lambda_{\mathfrak c}(\dd u) \quad \text{ and } \quad \bar\Lambda^{\alpha}_N:=\frac{1}{\Lambda^{\alpha}_N([0,1])}\Lambda^{\alpha}_N
\end{align*}
and assume that for a constant $c \in [0, \infty[$\,,
  \begin{align}\label{eq:assumptions_on_skewedoffspring}
   \lim_{N\rightarrow \infty}\rho_N^{-1} \frac{c_N}{\Lambda^{\alpha}_N([0,1])}=c.
  \end{align}
  Let $\Q$ be a probability kernel from $[0,1]$ to $\N\cup\{\infty\}$ and $\mu$ a measure in $[0,1]$ such that Condition \ref{MasterCondition} holds.
In particular, $\mu$ is $\sigma$-finite, so let  $(I_N)_{N\in \N}$ be an increasing sequence of sets such that we can define finite measures
\begin{align*}
\gamma_N(\cdot):=\mu (\cdot \cap I_N)\quad  \text{  and  }\quad  \bar\gamma_N := \frac{1}{\gamma_N([0,1])}\gamma_N
\end{align*}
and such that $\gamma_N([0,1]) \rho_N$ converges to 0.\\
Lastly, assume
\begin{align}\label{eq:unnecessary_assumption}
 \lim_{N\rightarrow \infty}\gamma_N([0,1])c_N = 0.
\end{align}
For each $N \in \N$, let $X^N$ be the 0-allele frequency process started in 0 in a $WF( N, \bar Y^N, \Q, c_N, \bar\Lambda^{\alpha}_N)$ graph as introduced in Definition \ref{def:discrete_frequency}, where $\bar Y^N=(Y^N_g)_{g \in \Z}$ is an \emph{iid} environment with common distribution 
 \begin{align*}
  \bar\mu_N:=(1-\gamma_N([0,1])\rho_N)\delta_0+\gamma_N([0,1])\rho_N \bar\gamma_N.
 \end{align*}
If $X^N_0 \xrightarrow{\;w\;} x\in[0,1]$, as $N\to\infty,$
\begin{align*}
\left(X^N\left(\left\lfloor\rho_N^{-1}t\right\rfloor\right): t \geq 0 \right)\; \Longrightarrow\; (X(t):t\geq0), 
\end{align*}
where $X=(X(t):t\geq0)$ is the two-type Fleming-Viot process with rare selection, i.e.\ the unique strong solution to \eqref{eq:SDEX} (for $w=0$) started in $X_0=x$. 
\end{thm}
Recall that, due to the \emph{iid} property of the environment, the distribution of $X^N$ does not depend on the starting time-point $g_0=0$. 

 Observe also that the mechanisms for skewed  offspring distribution and rare selection are very similar. The role of $c_N$ in the skewed offspring distribution is played by $\gamma_N([0,1])\rho_N$ for rare selection: they both describe the probability of an extreme, but rare event. In both cases we truncate a possibly infinite measure -- $y^{-1}\Lambda_{\mathfrak c}(\dd y)$ and $\mu$ respectively -- to obtain distributions for the finite population model. The analogue of assumption \eqref{eq:assumptions_on_skewedoffspring} for the case of rare selection also holds trivially as it is given by
 \begin{align*}
  \rho_N^{-1}\frac{\gamma_N([0,1])\rho_N}{\gamma_N([0,1])}=1
 \end{align*}
 The role of assumption \eqref{eq:unnecessary_assumption} becomes more clear when rewritten as
 \begin{align*}
  \lim_{N\rightarrow \infty}\frac{\gamma_N([0,1])\rho_Nc_N}{\rho_N} = 0
 \end{align*}
which uncovers that \eqref{eq:unnecessary_assumption} ensures that the probability of both rare events occurring simultaneously converges to 0 sufficiently quickly, in the appropriate time rescaling. Such an assumption simplifies the proof and it could be possibly weakened but this is beyond the scope of this paper.  
 
\begin{rem}[$\Lambda$-selection ] \label{rem:delta0} Recall from the introduction in Section \ref{subsec:DASG} that the measure $\Lambda_{\mathfrak c}$, is a distribution on $[0,1]$ with no atom in 0 which describes the extreme events resulting from a skewed offspring distribution through $y^{-2}\Lambda_{\mathfrak c}(\dd y)$ and recall also from the discussion after Condition \ref{MasterCondition}, that $\Lambda_{\mathfrak s}$ is likewise a finite measure on $[0,1]$ with no atom in 0 describing the selection mechanism through $\mu(\dd y) = (\E[K_y -1])^{-1}\1_{]0,1]}(y)\Lambda_{\mathfrak s}(\dd y)$. The extent of the parallel between these two mechanisms becomes more transparent with the following observations. If for a sequence $\Lambda^n_{\mathfrak c},$ $n\in \N$, of distributions we have $\Lambda^n_{\mathfrak c}\rightarrow \sigma\delta_0$ weakly,  as $n \rightarrow \infty$, (in the Prohorov-metric, for some $\sigma>0$), then the component of the generator in \eqref{eq:generatorX} corresponding to these jumps converges
\begin{align*}
\int_{[0,1]} \big\{x\left[f(x(1-z)+z)-f(x)\right]+(1-x)&\left[f(x(1-z))-f(x)\right]\big\}\frac{1}{z^2}\Lambda^n_{\mathfrak c}(\dd z)\\
							& \qquad \qquad \qquad   \xrightarrow{n\rightarrow \infty} \sigma \frac{1}{2}x(1-x)f''(x),
\end{align*}
uniformly in $x\in [0,1]$. The limit is the generator of the Wright-Fisher diffusion. For this reason it is common to find in the literature the interpretation that identifies the atom $\Lambda_{\mathfrak c}(\{0\})$ with the intensity $\sigma$ of the \lq\lq Wright-Fisher noise\rq\rq\  or, as will be seen later, of the \lq\lq Kingman component\rq\rq\ of the dual coalescence-mechanism (see for example \cite{Sch00} or, in a spatial set-up, in \cite{BEV10}).

Naturally, the question arises whether an analogous interpretation holds for the selection mechanism, i.e. whether in this sense rare selection converges to weak selection when the measures governing selection converge to a Dirac-measure in 0 and we show here that this is indeed the case. Let $\Lambda^n_{\mathfrak s}$, $n \in \N$ be a sequence of finite measures on $[0,1]$ such that $\Lambda^n_{\mathfrak s} \rightarrow w\delta_0$ weakly as $n \rightarrow \infty$ in the Prohorov-metric for a $w>0$. Let $\mathcal A^n_{\mathfrak s}$ be the summand in the generator \eqref{eq:generatorX} responsible for the rare selection mechanism induced by $\Lambda^n_{\mathfrak s}$. Consider the kernel given by $Q(y)  = (1-y)\delta_1 + y \delta_2$, $y \in [0,1]$, i.e.\ the mechanism with at most binary branching. Then Condition \ref{MasterCondition} holds automatically and implies that  $\mu$ has density $1/y$ with respect to $\Lambda^n_{\mathfrak s}$ on $(0,1]$.
Applying Taylor's expansion around $x$ reveals
\begin{align*}
A_{\mathfrak s}f(x)=\int_{[0,1]} \left\{f(x)-x(1-x)yf'(x)+{\cal O}(y^2) - f(x)\right\} & \frac{1}{ y}\Lambda^n_{\mathfrak s}(\dd y)\\
										& \hspace{-5pt} \xrightarrow{\;n \rightarrow \infty\;} -wx(1-x)f'(x)			
\end{align*}
which is the component responsible for genic (\emph{weak}) selection in \eqref{eq:generatorX}. The same result holds true for the choice of \emph{geometric} $\Q$ (see Remark \ref{rem:geometric_important}) and one might believe this to be true for every general kernel. Under some mild assumptions this is true, if one allows for a more general class of frequency dependent (weak) selection as studied in \cite{ADSelection}. The following conditions on the kernel $\Q=(Q(y):y \in [0,1])$, ensure convergence of the generator for rare selection to the generator of genic (weak) selection 
\begin{ass} \label{ass: weak conv}
Assume $y \mapsto Q(y, A)$ is continuous for every $A \in \mathfrak N$ and that
\begin{equation}\label{eq:nsw}
 \sum_{k=2}^{\infty} \lim_{y \downarrow 0} \frac{\P(K_y=k)}{\E[K_y-1]} = \lim_{y \downarrow 0}\;\frac{\P(K_y\geq 2)}{\E[K_y-1]}. 
\end{equation}
\end{ass}
\noindent Then, as we show below, for any $f \in C^2([0,1])$
\begin{align}\label{eq:convergencetoatom}
 A_{\mathfrak s}f(x)	& =\int_{[0,1]}	\left[f(\E[x^{K_y}])-f(x)\right]\ \mu(\dd y)	\xrightarrow{n\rightarrow \infty} -ws(x)x(1-x)f'(x)
\end{align}
uniformly in $x \in [0,1]$, where $s(x) = \sum_{k=2}^{\infty}\eta(k)\sum_{l=0}^{k-2}x^l$ (for $\eta(k)$ defined in \eqref{eq:definition_of_etak}), which are the selection coefficients studied in \cite{ADSelection}. The observations below also immediately  imply that if if asymptotically when approaching 0 the kernels put most of the weight on binary branches, i.e.
\begin{align*}
 \lim_{y \downarrow 0}\;\frac{\P(K_y=2)}{\E[K_y-1]} = 1 
\end{align*}
then $\eta(2)=1$ and $\eta(k)=0$, $k\geq 3$, i.e.\ and $s(x) \equiv1$ and the limit is classig weak selection.

Let us now see why \eqref{eq:convergencetoatom} holds. 
Using Taylor's expansion of order 1 around $x$ we see
\begin{align*}
A_{\mathfrak s}f(x)&=\int_{[0,1]}  \left[f(\E[x^{K_y}])-f(x)\right]\ \mu(\dd y)	 =\int_{[0,1]} \left[f(\E[x^{K_y}])-f(x)\right] \frac{1}{\E[K_y-1]}\ \Lambda^n_{\mathfrak s}(\dd y)\\
		& = \int_{[0,1]} (\E[x^{K_y}]-x)f'(x)\frac{1}{\E[K_y-1]}\ \Lambda^n_{\mathfrak s}(\dd y) \\
		& \qquad \qquad + \int_{[0,1]} \frac{1}{2}(\E[x^{K_y}]-x)^2f''(\xi_x)\frac{1}{\E[K_y-1]}\ \Lambda^n_{\mathfrak s}(\dd y) 
\end{align*}
for a suitable $\xi_x \in [\E[x^{K_y}],x]$. We can now consider the two integrals separately and show that the first converges to the desired limit, while the second vanishes. Note that in order to use weak convergence of distributions,  we need functions that are continuous in $y\in[0,1]$. The first part of Assumption \ref{ass: weak conv} ensures that the expressions that we will encounter are continuous for $y \in (0,1]$, but $y=0$ needs particular care due to the term $1/\E[K_y-1]$. (Recall that $ Q(0)=\delta_1$.) The following two observations are useful:
\begin{align}
 &\P(K_y \geq 2)\leq\sum_{k=1}^{\infty}(k-1)\P(K_y=k)\leq \E[K_y-1]\label{eq:weakconvergence_unnecessaryassumption}\\
 &\vert \E\left[x^{K_y}-x\right]\vert 	 = \vert x(\P(K_y=1)-1)+\sum_{k=2}^{\infty}x^k\P(K_y=k)\vert \leq 2\P(K_y\geq 2).\label{eq:weakconvergence_simpleobservation}
\end{align}
With this we easily show that the remainder term vanishes:
\begin{align*}
 \bigg\vert\int_{[0,1]}& (\E[x^{K_y}]-x)^2 f''(\xi_x)\frac{1}{\E[K_y-1]}\ \Lambda^n_{\mathfrak s}(\dd y)\bigg\vert 
					   \overset{\eqref{eq:weakconvergence_simpleobservation}}{\leq} 2\Vert f''\Vert_{\infty}\int_{[0,1]}\P(K_y\geq 2) \Lambda^n_{\mathfrak s}(\dd y)\\
					  & \xrightarrow{n\rightarrow \infty} 
					  2\Vert f''\Vert_{\infty}\P(K_0 \geq 2)w = 0,
\end{align*}
since $y \mapsto \P(K_y\geq 2)$ is continuous in $y=0$. Similarly,
\begin{align*}
 \int_{[0,1]} (\E[x^{K_y}]-x)	& f'(x)\frac{1}{\E[K_y-1]}\ \Lambda^n_{\mathfrak s}(\dd y) \xrightarrow{n\rightarrow \infty} wf'(x)\lim_{y\downarrow  0}\frac{\E[x^{K_y}]-x}{\E[K_y-1]}
\end{align*}
which we know exists and is bounded by 1 by \eqref{eq:weakconvergence_unnecessaryassumption}. To calculate this limit, we define for $y>0$ 
\begin{align*}
  \eta^y(1):= 1- \frac{\P(K_y\geq 2)}{\E[K_y-1]}, \quad  \eta^y(k):=\frac{\P(K_y=k)}{\E[K_y-1]}, k\geq 2
\end{align*}
and likewise
\begin{align}\label{eq:definition_of_etak}
 \eta(1):= 1- \lim_{y \downarrow  0}\frac{\P(K_y\geq 2)}{\E[K_y-1]}, \quad  \eta(k):=\lim_{y \downarrow  0}\frac{\P(K_y=k)}{\E[K_y-1]}, k\geq 2.
\end{align}
Then $\eta^y:=(\eta^y(k))_{k \in \N}$ and $\eta:=(\eta(k))_{k \in \N}$ are distributions (the latter by Assumption \eqref{eq:nsw}) such that $\lim_{y \downarrow 0}\eta^y(k)= \eta(k)$ for every $k \in \N$. Since thus $\eta^y$ converge weakly to $\eta$, 
\begin{align*}
 \lim_{y\downarrow  0}\frac{\E[x^{K_y}]-x}{\E[K_y-1]} =  \lim_{y\downarrow  0}\E_{\eta^y}[x^K-x] = \E_{\eta}[x^K-x] = \sum_{k \in \N}(x^k-x)\eta_k = -x(1-x)s(x)
\end{align*}
proving \eqref{eq:convergencetoatom}.
\end{rem}

We may, however, also obtain the diffusion with \emph{weak} selection from the particle system directly. In order to obtain both weak and rare selection simultaneously, we have to carefully design our random environment. $\mu$ will still be responsible for the \emph{rare} mechanism only. The kernel of distributions will now consist of distributions from two sets: a general one describing the rare selection and a set of geometric distributions describing weak selection. As before, in the particle system we will have to ensure to rarely make use of the former, while we weaken the effect of the latter as $N\rightarrow \infty$. 
In order to accomodate the different scalings of the mechanisms without overcomplicating notation, we exceptionally allow the environment to take values in $[-1,1]$.

\begin{thm}\label{thm:convergence_rare_and_weak}
Retain all the assumptions and notation of Theorem \ref{lem:converges}. For a sequence $(w_N)_{N\in\N}$ in $[0,1[$ and a constant $w\in[0, \infty[$\, assume 
  \begin{align*}
 \lim_{N\rightarrow \infty}\rho_N^{-1} w_N=w.
  \end{align*}
Let $\tilde \Q:=(Q(y): y \in ]-1,0[)$ be such that $ Q(y)=\text{Geo}_{\N}(1+y)$, for $y \in [-1,0[$.

For each $N \in \N$, let $X^N$ be the 0-allele frequency process in a $WF(N,\bar Y^N,\tilde\Q\cup\Q, c_N, \bar\Lambda^{\alpha}_N)$ graph
and \emph{iid} $[-1,1]$-valued environment $\bar Y^N$ with common distribution 
 \begin{align*}
  \mu_N:=(1-\gamma_N([0,1])\rho_N)\delta_{-w_N}+\gamma_N([0,1])\rho_N \bar\gamma_N.
 \end{align*}
Then, if $X^N_0 \xrightarrow{\;w\;} x\in [0,1]$, as $N\to\infty,$
\begin{align*}
\left(X^N\left(\left\lfloor\rho_N^{-1}t\right\rfloor\right): t \geq 0 \right)\; \Longrightarrow\; (X(t):t\geq0), 
\end{align*}
where $X=(X(t):t\geq0)$ is the two-type Fleming-Viot process with weak and rare selection, i.e.\ the unique strong solution to \eqref{eq:SDEX} with $X_0=x$. 
 \end{thm}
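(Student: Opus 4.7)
The proof is a direct extension of the generator-convergence argument used for Theorem \ref{lem:converges}. The only change compared to that theorem is that the atom of the environment distribution $\mu_N$ has been relocated from $0$ to $-w_N$. Under the geometric kernel $\tilde\Q$, the value $y=-w_N$ means that each individual independently samples $K\sim\mathrm{Geo}_{\N}(1-w_N)$ potential parents, which is precisely the reproduction mechanism of the classical Wright-Fisher model with weak selection of intensity $w_N$ (cf.\ Remark \ref{rem:geometric_important} and Example 2.3 of \cite{ADSelection}).

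Denote by $\mathcal A^N$ the discrete generator of $(X^N(\lfloor\rho_N^{-1}t\rfloor):t\geq 0)$. The strategy is to decompose $\mathcal A^N f$, as in the proof of Theorem \ref{lem:converges}, according to the pair of independent Bernoulli variables $(B_{\mathfrak s},B_{\mathfrak c})$ indicating, respectively, the occurrence of a rare selection event (probability $\rho_N\gamma_N([0,1])$) and of a multiple-merger event (probability $c_N$), and to show that each of the four resulting contributions converges uniformly on $[0,1]$ to the corresponding piece of the generator $\mathcal A$ in \eqref{eq:generatorX}. The three contributions with $B_{\mathfrak s}=1$ or $B_{\mathfrak c}=1$ are handled verbatim as in the proof of Theorem \ref{lem:converges}: the case $(1,0)$ produces the rare-selection integral against $\mu$, the case $(0,1)$ produces the $\Lambda_{\mathfrak c}$-merger integral, and the case $(1,1)$ vanishes since its prefactor $\rho_N^{-1}\cdot\rho_N\gamma_N([0,1])\cdot c_N=\gamma_N([0,1])c_N$ tends to $0$.

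The only genuinely new work lies in the case $(B_{\mathfrak s},B_{\mathfrak c})=(0,0)$, which now carries both the Kingman diffusion and the weak selection drift. Given this event, each individual independently picks $K_i\sim\mathrm{Geo}_{\N}(1-w_N)$ parents, uniformly and independently at random. A first-order Taylor expansion of $f$ around $\E[x^{K_{-w_N}}]=(1-w_N)x/(1-w_Nx)=x-w_Nx(1-x)+O(w_N^2)$ (uniform in $x\in[0,1]$), together with the variance estimate $\mathrm{Var}(X^N_1\mid x)=x(1-x)/N+o(1/N)$, gives
\begin{align*}
\E_x\bigl[f(X^N_1)-f(x)\,\big|\,B_{\mathfrak s}=B_{\mathfrak c}=0\bigr]=-w_Nx(1-x)f'(x)+\tfrac{x(1-x)}{2N}f''(x)+o(w_N)+o(1/N).
\end{align*}
Multiplying by $(1-\rho_N\gamma_N([0,1]))(1-c_N)\rho_N^{-1}$ and applying the hypotheses $\rho_N^{-1}w_N\to w$, $(\rho_N N)^{-1}\to\sigma$, $\rho_N\gamma_N([0,1])\to 0$ and $c_N\to 0$, the limit equals $-wx(1-x)f'(x)+\tfrac{\sigma}{2}x(1-x)f''(x)$, matching the remaining pieces of $\mathcal A$ in \eqref{eq:generatorX}. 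Summing the four contributions yields $\mathcal A^Nf\to\mathcal Af$ uniformly on $[0,1]$ for every $f\in C^2([0,1])$, and the stated weak convergence then follows by the same standard criterion used at the end of the proof of Theorem \ref{lem:converges}.

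The main obstacle I foresee is ensuring that all $O$ and $o$ terms remain uniform in $x\in[0,1]$ and that the interaction between the (unbounded) geometric variable $K$ and the merger mechanism in the case $(0,1)$ does not contaminate the $\Lambda_{\mathfrak c}$ limit. The uniformity is routine because $\E[x^{K_{-w_N}}]$ and $\E[x^{2K_{-w_N}}]$ are rational in $x\in[0,1]$ with denominators bounded away from $0$ as long as $w_N\in[0,1[$. The merger--weak-selection cross-term is controlled by noting that the perturbation introduced by $K\sim\mathrm{Geo}(1-w_N)$ versus $K\equiv 1$ in a merger step is $O(w_N)$, so that after multiplication by $c_N\rho_N^{-1}$ it contributes $O(c_Nw_N/\rho_N)=O(w\rho_N)\to 0$, and thus leaves the $\Lambda_{\mathfrak c}$-limit intact.
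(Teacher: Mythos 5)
Your proposal is correct and follows essentially the same route as the paper: the paper's proof likewise reuses the Bernoulli decomposition of the generator from Theorem \ref{lem:converges}, observes that the three summands involving rare selection or multiple mergers are unchanged, and recomputes only the $(B_{\mathfrak c},B_{\mathfrak s})=(0,0)$ term, obtaining $-w_N\frac{x(1-x)}{1-xw_N}f'(x)+\frac{1}{2N}x(1-x)\frac{1-w_N}{1-xw_N}f''(x)+o(N^{-2})$, which agrees with your expansion up to the placement of the $o(w_N)$ and $o(1/N)$ remainders. Your explicit treatment of the merger--weak-selection cross-term is a point the paper leaves implicit, and it is handled correctly.
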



\subsection{Moment duality and convergence to the branching coalescing process in random environment}\label{subsec:duality_convergence} 
As foreshadowed in the introduction, there is a genealogical process arising as  the moment dual to the scaling limit of the frequency process of a WF graph.

\begin{defn}\label{def:BCPRE}
For a distribution $\Lambda_{\mathfrak c}$ on $[0,1]$ with $\Lambda_{\mathfrak c}(\{0\})=0$ and for a measure $\mu$ on $[0,1]$ and a kernel $\Q$ satisfying Condition \ref{MasterCondition}, define the \emph{branching coalescing process in random environment} $Z=(Z(t):t\geq 0)$ with branching intensity $(Q,\mu,w)$ and coalescing intensity $(\Lambda_{\mathfrak c}, c, \sigma)$ as the continuous time $\N\cup\{\infty\}$-valued Markov chain with positive jumps from $n\in \N$ to $n+k$ at rate
\begin{equation*}
\int_{[0,1]} \Prob{\sum_{j=1}^nK_{y,j}=n+k} \mu (\dd y) + wn\delta_{1,k}
\end{equation*}
(where $K_{y,1},\ldots,K_{y,n}$ are \emph{iid} with distribution $Q(y)$) and with negative jumps from $n$ to $n-k\in \N$ occurring at rate
\begin{equation*}
c\int_{[0,1]}\binom{n}{k+1}  y^{k+1}(1-y)^{n-k-1} \frac{\Lambda_{\mathfrak c}(\dd y)}{y^2} + \sigma\binom{n}{2}\delta_{1,k},
\end{equation*}
and entrance law $\E^{\infty}[x^{Z(t)}]:=\P_x(X(t)=1)$, where $(X(t):t\geq 0)$ is the two-type Fleming-Viot process with rare and weak selection parametrised by $\Q$, $\mu$, $w$ and $\Lambda_{\mathfrak c}$, $c$ and $\sigma$.
\end{defn}

The first important result about $Z$ is its conservativeness and the Feller property with respect to the topology of the harmonic numbers. Denote by $C$ the set of continuous functions from the metric space $\mathbb{N}\cup \{\infty\}$ with $d(n,m)=|\frac{1}{m}-\frac{1}{n}|$ to itself.

\begin{lemma}\label{lem:fellerZ}
Under Condition \ref{MasterCondition}, the BCPRE $Z=(Z(t): t\geq0)$ given in Definition \ref{def:BCPRE} is Feller with respect to the topology of the harmonic numbers on $\N\cup\{\infty\}$ and conservative. More precisely, 
 \begin{align*}
  \forall \, n \in \N,\quad \forall \, t\geq 0:\qquad \E^n[Z_t]<\infty.
 \end{align*}
\end{lemma}

The Feller property of $Z$ is critically helpful in proving that $Z$ is the moment dual of $X$.

\begin{lemma}[Moment duality]\label{lem:duality}
Under Condition \ref{MasterCondition} let $X=(X(t) : t\geq 0)$ be the {two-type FV-process with rare and weak selection} solution to \eqref{eq:SDEX} and let $Z=(Z(t):t \geq 0)$ be the {branching coalescing process in random environment} from Definition \ref{def:BCPRE}. 
 
 For any $x \in [0,1]$, $n \in \N\cup\{\infty\}$ and $t \geq 0$ we have
 \begin{align*}
  \E_x[X(t)^n]=\E^n[x^{Z(t)}].
 \end{align*} 
\end{lemma}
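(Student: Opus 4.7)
The plan is to prove the moment duality by first verifying infinitesimal (generator) duality for $h(x,n) = x^n$, and then lifting to semigroup duality by a standard argument. With $\mathcal{A}$ as in \eqref{eq:generatorX} and $\mathcal{A}_Z$ the generator read off from the jump rates of Definition~\ref{def:BCPRE}, I would verify the identity
$$\mathcal{A}\bigl(h(\cdot,n)\bigr)(x) \;=\; \mathcal{A}_Z \bigl(h(x,\cdot)\bigr)(n) \qquad (x\in [0,1],\ n\in\N).$$
Once this is established, a classical duality result (e.g.\ Proposition~1.2 of Jansen--Kurt, or Theorem~4.4.11 in Ethier--Kurtz) concludes the proof for $n\in\N$. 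The case $n=\infty$ is then immediate from the convention $x^\infty = \1_{\{x=1\}}$, which is precisely how the entrance law of $Z$ from $\infty$ is defined in Definition~\ref{def:BCPRE}.

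For the generator calculation I would substitute $f(x)=x^n$ into \eqref{eq:generatorX} and match each summand with a piece of $\mathcal{A}_Z$. The rare-selection integrand becomes $(\E[x^{K_y}])^n - x^n = \sum_{k\geq 1}\P(\sum_{j=1}^n K_{y,j}=n+k)(x^{n+k}-x^n)$, matching the positive-jump rate of $Z$; the weak-selection term yields $-wx(1-x)\cdot nx^{n-1} = wn(x^{n+1}-x^n)$; the Kingman term yields $\tfrac{\sigma}{2} x(1-x)n(n-1)x^{n-2} = \sigma\binom{n}{2}(x^{n-1}-x^n)$. For the multiple-mergers integrand, binomial expansions of $(x(1-z)+z)^n$ and $(x(1-z))^n$, followed by cancellation of the $j=n$ boundary term and use of the identity $\sum_{j=2}^n\binom{n}{j}z^j(1-z)^{n-j} = 1-(1-z)^n-nz(1-z)^{n-1}$, produce — after reindexing $k=n-1-j$ — precisely $\sum_{k=1}^{n-1}\binom{n}{k+1}z^{k+1}(1-z)^{n-k-1}(x^{n-k}-x^n)$, matching the negative-jump rate. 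Condition~\ref{MasterCondition} guarantees absolute convergence of the selection integral uniformly in $x$, and finiteness of $\Lambda_{\mathfrak c}$ covers the coalescence integral, so both $\mathcal{A}(x^n)(x)$ and $\mathcal{A}_Z(x^n)(n)$ are bounded functions of their arguments.

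For the lift to the semigroup level the standard route is: on an enlarged probability space take independent copies of $X$ and $Z$, fix $T>0$, and consider $G(s) := \E_x\otimes \E^n\bigl[X(T-s)^{Z(s)}\bigr]$ on $[0,T]$. By the generator identity, together with uniform boundedness of $h$ by $1$ and of $\mathcal{A}(x^n)$, $\mathcal{A}_Z(x^n)$ (which justifies interchanging $\partial_s$ with the expectation), one finds $G'(s)\equiv 0$, so that $G(0)=G(T)$, which is precisely the desired identity. The main obstacle is the bookkeeping in the multiple-mergers term, where one must carefully separate the $j=n$ contribution of the binomial expansion to cancel $x^{n+1}$, reindex the resulting sum in the form of Definition~\ref{def:BCPRE}, and then use the binomial-complement identity above to collect the remaining $x^n$-contributions into differences $(x^{n-k}-x^n)$; the possibility that $Z$ explodes to $\infty$ in finite time causes no difficulty, because $|h|\leq 1$ and the entrance law at $\infty$ was defined precisely to extend the duality continuously.
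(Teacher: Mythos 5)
Your proposal is correct and follows essentially the same route as the paper: termwise generator duality for $h(x,n)=x^n$, the lift via Proposition~1.2 of Jansen--Kurt, and the entrance law for the case $n=\infty$; the paper simply cites the known dualities for the Kingman, $\Lambda$-coalescence and weak-selection components and writes out only the rare-selection term, which agrees with your calculation. (One minor slip: the reindexing in the multiple-mergers term should be $k=j-1$ rather than $k=n-1-j$, though the final formula you state is the correct one.)
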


As an important application of duality, the Feller property for the process $X$ follows as a simple consequence of $Z$ being conservative. This property is used in the proof of Theorem \ref{thm:convergence_rare_and_weak}.

\begin{lemma}\label{lem:fellerX}
 Under Condition \ref{MasterCondition} the two-type Fleming-Viot-process with weak and rare selection defined in Definition \ref{def:X_weak_and_rare} is Feller.
\end{lemma}

A second and important consequence of the moment duality is, finally, the convergence of the ancestral process to the BCPRE from Definition \ref{def:BCPRE}.
\begin{thm}\label{thm: ancestral converg}
Assume the conditions of Theorem \ref{thm:convergence_rare_and_weak} and let $Z^N$  be the {ancestral process on the Wright-Fisher graph with selection in random environment and multiple mergers $WF(N,\bar Y^N,\Q\cup\tilde\Q, c_N, \Lambda_{\mathfrak c})$} for an \emph{iid} environment $\bar Y^N=(Y^N_g)_{g \in \Z}$ with common distribution 
 \begin{align*}
  \mu_N:=(1-\gamma_N([0,1])\rho_N)\delta_{-w_N}+\gamma_N([0,1])\rho_N \bar\gamma_N.
 \end{align*}
 Then 
\begin{align*}
\left(Z^N\left(-\lfloor\rho_N^{-1}t\rfloor\right)\right):t\geq 0)\Longrightarrow (Z(t):t\geq0),
\end{align*}
where $Z=(Z(t):t\geq0)$ is the {branching coalescing process in random environment} given in Definition \ref{def:BCPRE}, when we equip $\N$ with the topology of the harmonic numbers, by considering the distance $d(m,n)=|\frac{1}{n}-\frac{1}{m}|$ for $n,m \in \N\cup\{\infty\}$ with $\frac{1}{\infty}:=0$. 
\end{thm}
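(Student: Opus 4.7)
The plan is to transfer the weak convergence $X^N\Rightarrow X$ from Theorem \ref{thm:convergence_rare_and_weak} to convergence of the ancestral processes $Z^N$ by combining the annealed sampling duality at every finite $N$ (Proposition \ref{prop:lambda_duality}) with the moment duality in the limit (Lemma \ref{lem:duality}). Specialising the duality to the rescaled time $\lfloor\rho_N^{-1}t\rfloor$ gives
\begin{equation*}
\E^n\!\left[H_{\mu_N}\bigl(x,Z^N(-\lfloor\rho_N^{-1}t\rfloor)\bigr)\right] \;=\; \E_x\!\left[H_{\mu_N}\bigl(X^N(\lfloor\rho_N^{-1}t\rfloor),n\bigr)\right]
\end{equation*}
for every $x\in[0,1]$, $n\in\N$ and $t\geq 0$, and the strategy is to pass $N\to\infty$ on both sides.

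The first step is to analyse the duality kernel
\begin{equation*}
H_{\mu_N}(x,n) \;=\; (1-\rho_N\gamma_N([0,1]))\,\varphi_{-w_N}(x)^n \,+\, \rho_N\gamma_N([0,1])\!\int_{[0,1]}\!\varphi_y(x)^n\,\bar\gamma_N(\dd y).
\end{equation*}
Using $\rho_N\gamma_N([0,1])\to 0$, the scaling $\rho_N^{-1}w_N\to w$ (so $w_N\to 0$), and the explicit form $\varphi_{-w_N}(x) = x(1-w_N)/(1-xw_N)$, I would verify (i) for each fixed $n\in\N$, $H_{\mu_N}(\cdot,n)\to (\cdot)^n$ uniformly on $[0,1]$, and (ii) for each fixed $x\in[0,1]$, $H_{\mu_N}(x,\cdot)\to x^{(\cdot)}$ uniformly on $\N\cup\{\infty\}$ in the harmonic topology. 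Assertion (ii) reduces to treating $m$ small (continuity in $w_N$) and $m$ large (dominating both $H_{\mu_N}(x,m)$ and $x^m$ by $x^M$, arbitrarily small for $x<1$).

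Combining (i) with $X^N(\lfloor\rho_N^{-1}t\rfloor)\Rightarrow X(t)$ shows the right-hand side converges to $\E_x[X(t)^n]$, which equals $\E^n[x^{Z(t)}]$ by Lemma \ref{lem:duality}. Combining (ii) with the duality identity then yields
\begin{equation*}
\E^n\!\left[x^{Z^N(-\lfloor\rho_N^{-1}t\rfloor)}\right]\;\longrightarrow\;\E^n\!\left[x^{Z(t)}\right]\qquad\text{for all }x\in[0,1],\;n\in\N,\;t\geq 0.
\end{equation*}
Since $\{x^{(\cdot)}:x\in(0,1)\}=\{e^{-\lambda(\cdot)}:\lambda>0\}$ is convergence determining on $(\N\cup\{\infty\},d)$ with $d(m,n)=|1/m-1/n|$ (the same core used in Proposition \ref{prop:Zfeller}), this establishes convergence of one-dimensional distributions. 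Finite-dimensional convergence then follows by iteration via the Markov property of $Z^N$ and the Feller property of $Z$.

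The remaining ingredient, which I expect to be the main technical obstacle, is tightness in the Skorokhod space $D([0,\infty[\,;\,\N\cup\{\infty\})$. The compact containment condition is automatic since $(\N\cup\{\infty\},d)$ is compact, so it suffices to verify Aldous' modulus-of-continuity criterion. I would control the harmonic-distance jumps of $Z^N$: coalescences only contribute bounded increments to $1/Z^N$ at rates matching the limiting generator, while branchings are rare on the time scale $\rho_N^{-1}$. Alternatively (and perhaps more cleanly), one can verify convergence of the discrete generators of $Z^N$ to the generator of $Z$ on the core $\{f_\lambda\}_{\lambda>0}$ and invoke the standard Markov chain convergence theorem of Ethier--Kurtz, exploiting that $Z$ is conservative (Theorem \ref{thm:conservative}) and Feller (Proposition \ref{prop:Zfeller}).
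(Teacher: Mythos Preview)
Your core strategy---transferring the convergence of $X^N$ to that of $Z^N$ via the pair of dualities---matches the paper exactly, and your analysis of $H_{\mu_N}$ is essentially what the paper carries out (the paper uses Dini's theorem for your uniformity claim (ii)). The difference lies in how you conclude Skorokhod convergence.

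The paper does \emph{not} split into finite-dimensional distributions plus tightness. Instead, it observes that the convergence $\E^n[x^{Z^N(-\lfloor\rho_N^{-1}t\rfloor)}]\to\E^n[x^{Z(t)}]$ holds \emph{uniformly in $n\in\N\cup\{\infty\}$} (this uniformity comes precisely from your claim (ii) combined with the fact that $\E_x[X^N(\lfloor\rho_N^{-1}t\rfloor)^n]$ is monotone in $n$), which means the semigroups satisfy $P^N_tf_\lambda\to P_tf_\lambda$ uniformly on the compact space $(\N\cup\{\infty\},d)$ for each $f_\lambda$ in the core. Since both $Z^N$ and $Z$ are Feller, Theorem~19.25 in Kallenberg then gives weak convergence in $D([0,\infty);\N\cup\{\infty\})$ directly. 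No separate tightness argument is needed.

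Your route via fdd\,+\,tightness is in principle workable, but the tightness step you flag as ``the main technical obstacle'' is genuinely bypassed by the paper's semigroup argument. Note that you already have all the ingredients for the paper's approach: your uniformity (ii) is exactly what upgrades pointwise-in-$n$ convergence to uniform semigroup convergence. The alternative you mention at the end (generator convergence plus Ethier--Kurtz) would be a third route, but it would require computing and controlling the discrete generator of $Z^N$, which the paper avoids entirely.
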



\section{Long term behaviour} \label{sec:longterm}
We finally come to consider the long-term behaviour or ergodicity properties of the \emph{two-type Fleming-Viot process with weak and/or rare selection} $X=(X(t) : t\geq 0)$ (Def. \ref{def:X_weak_and_rare}) and the \emph{branching coalescing process in random environment} $Z=(Z(t) : t\geq 0)$ (Def. \ref{def:BCPRE}).

\noindent Throughout this section assume $\Q$ and $\mu$ to be such that Condition \ref{MasterCondition} holds. In order to avoid trivialities, assume also that $\mu(]0,1])+w+c+\sigma >0$. 

Taking a closer look at \eqref{eq:SDEX}, we see that the {two-type FV-process for rare and/or weak selection} $X=(X(t) : t\geq 0)$ is a (bounded) supermartingale, since the last two terms -- corresponding to the neutral genetic drift -- each yield martingales, while the first two terms -- corresponding   to selection -- give a downward drift and downward jumps, respectively. Hence $X$ converges $\P$-a.s.\ to a random variable, which we will name $X_{\infty}$. The distribution of this random variable is not only of mathematical interest, but also of biological relevance as it encodes the probabilities of fixation or extinction of the weak allele or, a priori, coexistence of the two  types traced. As expected, coexistence can be ruled out almost surely, in this case as a direct consequence of the duality between $X$ and the {branching coalescing process in random environment} $Z$: duality does indeed uncover how the weak allele's chance of survival (and thus fixation) depends on the ergodic properties of $Z=(Z(t) : t\geq 0)$, as we point out in the following remark.

\begin{rem}\label{rem:Xto01}
 Applying the same arguments as in Lemma 4.7 of \cite{ADSelection}, the duality obtained in Lemma \ref{lem:duality} implies:
 \begin{enumerate} 
 \item If $Z$ is \emph{positive recurrent}, then it has a unique invariant distribution $\nu$ and 
  \begin{align*}
  \forall \, x \in [0,1]: \; \P_x\left(X_{\infty} \in \,\cdot\,\right) = (1-\varphi_{\nu}(x))\delta_0 +\varphi_{\nu}(x)\delta_1,
 \end{align*}
  where $\varphi_{\nu}$ is the probability generating function of $\nu$. \label{item:posrec}
  \item If, on the other hand, $Z$ is \emph{not positive recurrent}, then 
  \begin{align*}
   \forall \, x \,\in\,[0,1[\ :\; \P_x\left(X_{\infty} = 0\right) = 1.
   \end{align*}
 \end{enumerate}
 
 \noindent In other words, the duality implies that $X_{\infty} \in \{0,1\}$ $\P_x$-a.s.\ for any $x \in [0,1]$ and the weak allele only has a positive probability of fixation if $Z$ is positive recurrent. In this case, the probability generating function of the invariant distribution of $Z$ determines this probability. 
 
 Note also that the dichotomy in particular implies that if there exists an $x \in [0,1[$ such that $\P_x\left(X_{\infty} = 0\right) < 1$, then this must indeed hold for all $x \in [0,1[$.
\end{rem}
 Naturally, the question arises how the chances of survival of the weak allele depend on the strength of selection. We answer this with Theorem \ref{thm:XlongtermGriffiths} below and can use the above observations to consequently deduce the ergodic behaviour of the branching coalescing process in random environment in Corollary \ref{cor:branching}.


\subsection{Probability of fixation of the weak allele}\label{subsec:longterm_X} 

The mechanisms of selection and genetic drift compete in their influence on the probability of fixation (or extinction) of the weak allele. The following quantities are at play: 
\\
(i) the {\em strength of  genetic drift}, characterised by $\sigma$ and the pair $c$ and 
\begin{align}\label{eq:beta*}
 \beta^*:= \frac{1}{2}\E\left[\frac{1}{W(1-W)}\right],
\end{align}
respectively. Here, $W:=Y^{\mathfrak c} U$, where $Y^{\mathfrak c}\sim \Lambda_{\mathfrak c}$, $U$ has density $2u$ on $[0,1]$ and they are independent; \\
(ii) the {\em strength of weak selection}, given by $w,$ and that {\em of rare selection}, given by the pair $\alpha_{\mathfrak s} :=\Lambda_{\mathfrak s}([0,1])$ and 
\begin{align}\label{eq:alpha*}
 \alpha^*:=\E\left[\frac{1}{1+V\E\left[K_{Y^{\mathfrak s}}-1\,\big|\, Y^{\mathfrak s}\right]}\right],
\end{align}
where $V$ is uniform on $[0,1]$ and $Y^{\mathfrak s} \sim \alpha_{\mathfrak s}^{-1}\Lambda_{\mathfrak s}([0,1])$ and they are independent.

As formalised in the following theorem, if the total strength of selection is sufficiently strong, it overcomes the genetic drift and the weak allele will become extinct almost surely. If not, it retains a positive probability of survival even in the presence of selection.
\begin{thm}\label{thm:XlongtermGriffiths} 
Let $X=(X(t) : t\geq 0)$ be the two-type FV-process for weak and rare selection given by \eqref{eq:SDEX} with $\sigma=0$. 
 Define
 \begin{align*}
  p(x):=\P_x(X_{ \infty} = 0) 
 \end{align*}
 to be the probability of extinction of the weak allele 0, given that we start with a frequency $x \in [0,1]$. 
 
 Assume $\beta^*<\infty$. 
 \begin{enumerate}
  \item If $\alpha_{\mathfrak s}\alpha^*+w<c\beta^*$, then for all $ x \in \,\ ]0,1]$\ :\quad  $p(x)<1$. \label{item:geneticdriftwins_frequency}
  \item If $\alpha_{\mathfrak s}\alpha^*+w>c\beta^*$, then for all $ x \in \,[0,1[$\ :\quad $p(x)=1$. \label{item:selectiontwins_frequency}
 \end{enumerate}
\end{thm} 
$c\beta^*$ is the threshold identified for only \emph{weak} selection as defined with this representation in Equation (26) in \cite{GriffithsLambda} (where $c=1$) and coincides with the threshold in \cite{Foucart13}. Note that, given Condition \ref{MasterCondition}, the assumption of Theorem \ref{thm:XlongtermGriffiths} is the assumption of Theorem 3 in \cite{GriffithsLambda}, which, however, also treats the critical case when the strength of (weak) selection and genetic drift are equal. In particular under the condition that $\beta^*<\infty$, $W$ cannot have an atom at zero (implying $\sigma = 0$ also in \cite{Foucart13} and \cite{GriffithsLambda}). 
Let us stress that \cite{Foucart13} succeeds to also cover the case $\beta^*=\infty$ for weak selection.
 
 \begin{rem}
  In the case studied in \cite{huracain} lizards with long fingers have a selective advantage whenever their habitat is hit by a hurricane, as their enhanced ability to hold on prevents them from being -  literally - blown away. A generation under the influence of a hurricane can be modelled as a two-type Wright-Fisher model with selection in a random environment,
  taking $K_y\sim Q(y)$ to be geometric with parameter $1-y$ and adapt the distribution of $Y^{\mathfrak s}$, to model the frequency and intensity of hurricanes. Theorem \ref{thm:XlongtermGriffiths} now gives conditions for which the individuals with long fingers will go to fixation almost surely, and thus help us to understand how pulses of selection shape the evolution of lizards in particular and all forms of life in general.
 \end{rem}

 \begin{rem}\label{rem:geom_v_Bin}
 The strength of rare selection is determined by both the kernel $\Q=(Q(y)\,:\, y \in [0,1])$ and the measure $\mu$, respectively $\Lambda_{\mathfrak s}$. Two relevant examples are the case of \emph{geometric} $\Q$ 
\begin{align*}
 Q_{\text{geo}}(y) = \sum_{i=1}^\infty(1-y)y^{i-1}\delta_i
\end{align*}
through its connection to \emph{weak} selection (see Remark \ref{rem:geometric_important}) and the case of \emph{binary} $\Q$ 
 \begin{align*}
 Q_{\text{bin}}(y) = (1-y)\delta_1 + y\delta_2
\end{align*}
as the simplest branching mechanism (in the dual branching-coalescing process $Z$). In these cases we obtain
 \begin{align*}
 \alpha^*_{\text{geo}} := \E\left[\frac{1}{1+V\frac{Y^{\mathfrak s}}{1-Y^{\mathfrak s}}}\right] \qquad \text{and} \qquad \alpha^*_{\text{bin}}:= \E\left[\frac{1}{1+VY^{\mathfrak s}}\right]. 
 \end{align*}
 Observe that, given the same choice of $\Lambda_{\mathfrak s}$, i.e. the same distribution of $Y_{\mathfrak s}$, $\alpha^*_{\rm{geo}}\leq\alpha^*_{\rm{bin}}$ and we say that the \emph{effective strength of rare selection} is larger in the binary case, than in the geometric case, since this, of course, implies $\alpha_{\mathfrak s}\alpha^*_{\rm {geo}}\leq\alpha_{\mathfrak s}\alpha^*_{\rm{bin}}$.
  \end{rem}
  
  Theorem \ref{thm:XlongtermGriffiths} is obtained as a simple combination of two observations: Lemma \ref{lem:amazinggrace} explains that the frequency process converges to 0 almost surely, i.e. the weak allele dies out almost surely, if and only if it does so quickly, in the sense that its expectation is integrable over time. (Indeed, we will prove that in this case, the expectation decays exponentially.) Lemma \ref{lem:Xlongterm_pure_Griffiths} then establishes the connection between the threshold and the integrability in time of the expected frequency of the weak allele.

\begin{lemma}\label{lem:amazinggrace}
Let $X=(X(t) : t\geq 0)$ be the two-type FV-process for weak and rare selection given by \eqref{eq:SDEX}. Assume $\beta^*<\infty$. 
 For any $x \in [0,1]$:
 \begin{align*}
  \int_{[0,\infty[}\E_{x}[X(s)]\dd s < \infty \quad \Longleftrightarrow \quad   \P_x(X_{\infty}=0)=1.
 \end{align*}
\end{lemma}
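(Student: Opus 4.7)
The direction $(\Rightarrow)$ is straightforward. Applying the generator $\mathcal A$ from \eqref{eq:generatorX} to the identity $f(x)=x$ yields $\mathcal A f(x)=-G(x)$, where
\[
 G(x)\;:=\;wx(1-x)+\int_{[0,1]}\bigl(x-\mathbb E[x^{K_y}]\bigr)\mu(\dd y)\;\ge\;0,
\]
the non-negativity coming from $K_y\ge 1$ (so $x^{K_y}\le x$ on $[0,1]$) and the integrability from Condition \ref{MasterCondition}. Hence $X$ is a bounded supermartingale, so $t\mapsto \mathbb E_x[X(t)]$ is non-increasing and converges, by bounded convergence, to $\mathbb E_x[X_\infty]$. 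If the integral is finite, this limit must vanish, and combined with $X_\infty\in\{0,1\}$ $\P_x$-a.s.\ this gives $\P_x(X_\infty=0)=1$.

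For the direction $(\Leftarrow)$, assume $\P_x(X_\infty=0)=1$, and aim for exponential decay of $\mathbb E_x[X(t)]$. Write $G(x)=x\,h(x)$ where $h\in C[0,1]$, $h>0$ on $[0,1)$ and $h(1)=0$. Dynkin's formula combined with the hypothesis yields the balance identity
\[
 \mathbb E_x\!\Bigl[\int_0^\infty X(s)\,h(X(s))\dd s\Bigr]\;=\;x.
\]
On $[0,1-\eps]$ we have $h\ge h_\eps:=\min_{[0,1-\eps]}h>0$, so immediately
$\mathbb E_x\bigl[\int_0^\infty X(s)\mathbf 1_{X(s)\le 1-\eps}\dd s\bigr]\le x/h_\eps<\infty$. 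In parallel, the moment duality of Lemma \ref{lem:duality} together with $Z(t^*)\ge 1$ a.s.\ gives the contraction
\[
 \sup_{x\in[0,1-\eps]}\frac{\mathbb E_x[X(t^*)]}{x}\;=\;\mathbb E^1\!\bigl[(1-\eps)^{Z(t^*)-1}\bigr]\;=:\;\rho_\eps\;<\;1,
\]
for any $\eps>0$ and $t^*>0$, the strict inequality holding because $\P^1(Z(t^*)\ge 2)>0$ under the non-triviality assumption on $\mu$ and $w$.

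The remaining, delicate task is to control the expected occupation time of $(1-\eps,1]$, where $h$ fails to be bounded away from $0$. The key quantitative tool is the inequality
\[
 G(x)\;\le\;\gamma\,(1-x)\quad\text{on }[0,1],\qquad \gamma:=w+\int_{[0,1]}\mathbb E[K_y-1]\,\mu(\dd y)<\infty,
\]
obtained from the elementary bound $1-\mathbb E[x^{K_y}]\le (1-x)\mathbb E[K_y]$ and Condition \ref{MasterCondition}. This estimate shows that the submartingale $V(t):=1-X(t)$ has drift at most $\gamma V(t)$, so $(V(t)e^{-\gamma t})_{t\ge0}$ is a (true) supermartingale; iterated optional stopping at successive exits and re-entrances of $(1-\eps,1]$, combined with the contraction $\rho_\eps$ on $[0,1-\eps]$ and the strong Markov property, yields a geometric bound $\mathbb E_x[X(nt^*)]\le C\rho^n$ for some $\rho<1$, and integration in $t$ produces $\int_0^\infty \mathbb E_x[X(s)]\dd s<\infty$.

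The main obstacle is precisely the vanishing of $G$ at $x=1$: the balance identity alone is powerless near $1$, and the natural Lyapunov candidates such as $f(x)=x$ or $f(x)=-\log(1-x)$ fail (the former because of the same vanishing, the latter because the $\sigma$-term causes the second-order contribution to blow up at $1$). Our substitute is the pair of two-sided estimates on $G$---bounded below by $h_\eps\, x$ on $[0,1-\eps]$ and above by $\gamma(1-x)$ near $1$---tied together through moment duality and the strong Markov property. This is the ingredient that closes the gap left in \cite{GriffithsLambda}.
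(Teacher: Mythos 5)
Your ``$\Rightarrow$'' direction and the first two ingredients of ``$\Leftarrow$'' (the balance identity $\E_x[\int_0^\infty X(s)h(X(s))\,\dd s]=x$ giving finite weighted occupation of $[0,1-\eps]$, and the duality contraction $\rho_\eps=\E^1[(1-\eps)^{Z(t^*)-1}]<1$) are correct, and you correctly locate the difficulty at the occupation time of $(1-\eps,1]$. But the tool you propose for that step does not work: the supermartingale property of $e^{-\gamma t}V(t)$ with $V=1-X$ bounds $\E[V(t)]$ from \emph{above}, i.e.\ $\E_x[X(t)]$ from \emph{below} --- it says the process cannot leave a neighbourhood of $1$ too quickly, which is the opposite of what you need. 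Concretely, if $\tau$ is the exit time of $(1-\eps,1]$ started from $V(0)=v<\eps$, optional stopping gives only $\eps\,\E[e^{-\gamma\tau}]\le\E[e^{-\gamma\tau}V(\tau)]\le v\le\eps$, i.e.\ no information; in fact $\P(V(t)\ge\eps)\le v e^{\gamma t}/\eps$ shows the expected sojourn above $1-\eps$ blows up like $\gamma^{-1}\log\frac{1}{1-x}$ as the entry point $x\uparrow 1$, and entry points arbitrarily close to $1$ occur through the upward $\mathcal N_{\mathfrak c}$-jumps $x\mapsto x(1-z)+z$. Since every mechanism pushing $X$ down (the drift $-G$, the diffusion coefficient, and both jump measures) degenerates at $x=1$, there is no uniform bound on excursion lengths, and the claimed conclusion $\E_x[X(nt^*)]\le C\rho^n$ does not follow from ``iterated optional stopping'' plus the contraction on $[0,1-\eps]$. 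This is precisely the gap the lemma is meant to close, so the argument is incomplete at its essential point. (Two smaller issues: in the neutral case $w=\mu(]0,1])=0$ your $h$ vanishes identically, and the case $\sigma+c=0$ needs separate treatment.)

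For comparison, the paper's proof does not work with the limiting SDE at all at this step: it returns to the finite-$N$ Wright--Fisher graph, shows (Step 1, via sampling duality) that $\P_x(X^N(\infty)=0)\to 1$, and then, conditionally on extinction --- which in the finite system happens in finite time --- couples a uniformly chosen individual in generation $g$ with an independently, uniformly placed \emph{strong} individual of the same generation. The tagged individual can be of type $0$ only if its ancestry avoids the strong ancestry in every one of the $g$ preceding generations, an event of probability at most $(1-p_N)^g$ with $p_N$ the per-generation coalescence probability of two tagged lineages. Crucially $p_N$ does not depend on the current frequency, so the bound is immune to the degeneracy at $x=1$; passing to the limit gives $\E_x[X(t)]\le e^{-(\sigma+m_1c)t}$. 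If you want to repair your approach you would need a substitute for this frequency-independent coalescence estimate; the selection drift alone cannot supply it.
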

The result of Lemma \ref{lem:amazinggrace} can be explained with the following intuition: If one believes that the sample heterozygosity $\E_x[X(t)(1-X(t))]$ decays exponentially over time for any $x \in [0,1[$ as it does in the classical case, the assumption $\P_x(X_{\infty}=0)=1$ means that the term $1-X(t)$ cannot contribute to this decay, hence $\E_x[X(t)]$ should decay exponentially (and thus be integrable), as we will indeed prove.

\begin{lemma}\label{lem:Xlongterm_pure_Griffiths} 
Let $X=(X(t) : t\geq 0)$ be the two-type FV-process for weak and rare selection given by \eqref{eq:SDEX} with $\sigma=0$. 
 Assume $\beta^*<\infty$. 
 \begin{enumerate}
  \item If $\alpha_{\mathfrak s}\alpha^* + w<c\beta^*$, then there exists an  $ \bar x \in \,\ ]0,1]$\ :\quad  $\int_{[0,\infty[}\E_{\bar x}[X(s)]\dd s = \infty$. \label{item:geneticdriftwins_helpfullemma}
  \item If $\alpha_{\mathfrak s}\alpha^* + w>c\beta^*$, then for all $ x \in \,[0,1[$\ :\quad $\int_{[0,\infty[}\E_x[X(s)]\dd s < \infty$.\label{item:selectiontwins_helpfullemma}
 \end{enumerate}
\end{lemma}
The proof of Lemma \ref{lem:Xlongterm_pure_Griffiths} is quite delicate in that it requires analysis of a suitable Lyapunov function. It follows the strategy used in \cite{GriffithsLambda}. The proof, provided in Section \ref{sec:ltr}, is also of independent interest because it provides a rigorous method to fill a technical gap often left open in the proof of similar statements found in the literature. 
 Lemma \ref{lem:amazinggrace} crucially helps circumventing such issues and guarantees our final result.

The proof of both lemmata can be found in Section \ref{sec:ltr}. 

\noindent Once the truth of Lemmata \ref{lem:Xlongterm_pure_Griffiths} and \ref{lem:amazinggrace} is established, the proof of Theorem \ref{thm:XlongtermGriffiths} is straightforward and thus we give it here.

\begin{proof}[Proof of Theorem \ref{thm:XlongtermGriffiths}]
Assuming $\alpha_{\mathfrak s}\alpha^* + w<c\beta^*$, Lemma \ref{lem:Xlongterm_pure_Griffiths}, \ref{item:geneticdriftwins_helpfullemma}. and Lemma \ref{lem:amazinggrace} imply the existence of an $\bar x \in ]0,1]$ such that $\P_{\bar x}(X_{\infty}=0)<1$. Therefore, we are in case \ref{item:posrec}. of Remark \ref{rem:Xto01}, i.e. the dual $Z$ must be positive recurrent. Since this does not depend in the choice of starting point $x$  we may conclude that $\P_{x}(X_{\infty}=0)<1$ for all $x \in [0,1]$ which proves \ref{item:geneticdriftwins_frequency}.. \ref{item:selectiontwins_frequency}. is immediate from Lemmas \ref{lem:Xlongterm_pure_Griffiths}, \ref{item:selectiontwins_helpfullemma}. and \ref{lem:amazinggrace}.
\end{proof}

\subsubsection{Griffiths representation of the generator $\mathcal A$ of $X=(X(t):t \geq 0)$}

The proof of Lemma \ref{lem:Xlongterm_pure_Griffiths} follows the idea of the proof of Theorem 3 in \cite{GriffithsLambda}, but we extend and formalise the arguments. Central to this argument is a representation of the generator $\mathcal A$ (equation \eqref{eq:generatorX}) of $X$, in the spirit of Theorem 1 in \cite{GriffithsLambda}. This representation can be generalised and we believe this observation to be of independet interest, whence we present it here.

\begin{lemma}[Griffiths representation]\label{lem:Griffithsgenerator_general}
The generator $\mathcal A$ of $X$ can be written as
\begin{align*}
 \mathcal A f(x) 	& = \sigma\frac{1}{2}x(1-x)f''(x)+ \frac{1}{2}cx(1-x)\E\left[f''(x(1-W)+VW)\right] \\
			& \quad -w x(1-x)f'(x) \\
			& \quad -\alpha_{\mathfrak s}x(1-x)\E\Bigg[\E\left[\sum_{l=0}^{K_{Y^{\mathfrak s}}-2}x^l\,\Big|\, Y^{\mathfrak s}\right]\frac{1}{ \E[K_{Y^{\mathfrak s}}-1\,\mid\,Y^{\mathfrak s} ]}\times\\
			& \qquad \qquad \qquad \qquad \qquad \qquad \qquad  f'\left(x-x(1-x)V\E\left[\sum_{l=0}^{K_{Y^{\mathfrak s}}-2}x^l\,\mid\, Y^{\mathfrak s}\right]\right)\Bigg],
 \end{align*}
for every $x \in [0,1]$, and $V$, $W$ and $Y^{\mathfrak s}$ chosen as in \eqref{eq:beta*} and \eqref{eq:alpha*}.
\end{lemma}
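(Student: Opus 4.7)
The plan is to verify the identity term by term, exploiting the additive structure of $\mathcal A$ in \eqref{eq:generatorX}. The Kingman component $\sigma\frac12 x(1-x)f''(x)$ and the weak-selection component $-wx(1-x)f'(x)$ appear unchanged on both sides, so only the coalescent piece and the rare-selection piece need to be rewritten.

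First I would handle the rare selection integral. The algebraic identity $x^k-x=-x(1-x)\sum_{l=0}^{k-2}x^l$ (valid for $k\ge 1$ with an empty sum when $k=1$) gives, after averaging over $K_y$,
\[
\E[x^{K_y}]-x\;=\;-x(1-x)\,S(y,x),\qquad S(y,x):=\E\Big[\sum_{l=0}^{K_y-2}x^l\,\Big|\,y\Big].
\]
Since $f$ is $C^2$, the fundamental theorem of calculus yields $f(x+a)-f(x)=a\int_0^1 f'(x+ta)\,\dd t$, applied with $a=-x(1-x)S(y,x)$. Substituting and using $\mu(\dd y)=\E[K_y-1]^{-1}\1_{]0,1]}(y)\Lambda_{\mathfrak s}(\dd y)$ together with $\Lambda_{\mathfrak s}=\alpha_{\mathfrak s}\,\P(Y^{\mathfrak s}\in\cdot)$, the integral over $\mu$ becomes an expectation over $Y^{\mathfrak s}$. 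Writing $\int_0^1 f'(\cdots)\dd t = \E_V[f'(\cdots|_{t=V})]$ for $V$ uniform on $[0,1]$ independent of $Y^{\mathfrak s}$, one obtains exactly the displayed selection term.

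Next, for the coalescent integral, the key is the Taylor-with-integral-remainder formula
\[
f(x+a)-f(x)-af'(x)=a^2\int_0^1(1-t)f''(x+ta)\,\dd t,\qquad a\in\R.
\]
Apply this with $a=z(1-x)$ in the term weighted by $x$, and with $a=-xz$ in the term weighted by $1-x$. The first-order contributions $xz(1-x)f'(x)-(1-x)xz f'(x)$ cancel, leaving
\[
\int_{[0,1]}\Big\{x(1-x)^2\!\int_0^1(1-t)f''(x+tz(1-x))\dd t+x^2(1-x)\!\int_0^1(1-t)f''(x-txz)\dd t\Big\}\Lambda_{\mathfrak c}(\dd z).
\]
It remains to identify this with $\frac{1}{2}x(1-x)\E[f''(x(1-W)+VW)]$. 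Conditioning on $Y^{\mathfrak c}=z$, the argument is $x+zU(V-x)$. Splitting the uniform $V$ according to $\{V>x\}$ (with probability $1-x$, reparametrised as $V-x=(1-x)T$) and $\{V<x\}$ (with probability $x$, reparametrised as $x-V=xT$) yields
\[
\E[f''(x+zU(V-x))]=2(1-x)\!\int_0^1\!\!\int_0^1 u f''(x+zu(1-x)t)\dd t\,\dd u+2x\!\int_0^1\!\!\int_0^1 u f''(x-zuxt)\dd t\,\dd u,
\]
using that $U$ has density $2u$. Finally, the substitution $s=ut$ in $\int_0^1\!\int_0^1 u\,g(ut)\dd t\,\dd u=\int_0^1(1-s)g(s)\dd s$ converts each double integral into the corresponding Taylor-remainder integral above; multiplying by $\frac12 x(1-x)$ matches the coalescent contribution term by term, after integration against $\Lambda_{\mathfrak c}$.

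The main obstacle is the coalescent rewriting: one has to guess the right probabilistic representation of the double integral, and in particular discover that the density $2u$ for $U$ is exactly what makes the identity $\int_0^1\!\int_0^1 u g(ut)\dd t\,\dd u=\int_0^1(1-s)g(s)\dd s$ line up with the Taylor remainder $\int_0^1(1-t)f''(x+ta)\dd t$. Once this matching is found, everything else is routine bookkeeping using Fubini's theorem, which is justified because $f\in C^2([0,1])$, $\Lambda_{\mathfrak c}$ is finite, and Condition \ref{MasterCondition} controls the selection integrand through the factor $\E[K_y-1]^{-1}$.
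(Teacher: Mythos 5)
Your proof is correct, and for the rare-selection term it coincides with the paper's own argument: the identity $x^k-x=-x(1-x)\sum_{l=0}^{k-2}x^l$, the fundamental theorem of calculus in the form $f(x+a)-f(x)=a\int_0^1 f'(x+ta)\,\dd t$, and the normalisation $\mu(\dd y)=(\E[K_y-1])^{-1}\Lambda_{\mathfrak s}(\dd y)$ with $\Lambda_{\mathfrak s}=\alpha_{\mathfrak s}\,\P(Y^{\mathfrak s}\in\cdot)$ are exactly the steps used there. The one place where you genuinely diverge is the coalescent component: the paper does not prove the identity $\mathcal A_{\mathfrak c}f(x)=\tfrac12 x(1-x)\E[f''(x(1-W)+VW)]$ at all, but simply cites Theorem 1 of \cite{GriffithsLambda}, whereas you re-derive it from scratch via the Taylor remainder $f(x+a)-f(x)-af'(x)=a^2\int_0^1(1-t)f''(x+ta)\,\dd t$, the cancellation of the first-order terms $xz(1-x)f'(x)-(1-x)xzf'(x)$, the conditional reparametrisation of $V$ on $\{V>x\}$ and $\{V<x\}$, and the identity $\int_0^1\!\int_0^1 u\,g(ut)\,\dd t\,\dd u=\int_0^1(1-s)g(s)\,\dd s$ matching the density $2u$ of $U$ to the weight $(1-s)$ in the remainder. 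I checked these computations and they are all correct (including the Fubini justifications, which are unproblematic for $f\in C^2$ and finite $\Lambda_{\mathfrak c}$). Your version is therefore more self-contained than the paper's, at the cost of some extra bookkeeping; the paper's version is shorter but leans on an external result.
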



\subsection{Ergodicity of the branching coalescing process in random environment}\label{subsec:ergodicity_Z}

As characterised in Remark \ref{rem:Xto01}, the chance of survival of the weak allele has a direct correspondence to the ergodic behaviour of the \emph{branching coalescing process in random environment} $(Z(t) : t\geq0)$. Hence, the following is a corollary or Theorem \ref{thm:XlongtermGriffiths}.

\begin{corollary}\label{cor:branching}
Let $Z=(Z(t) : t\geq0)$ be the coordinated branching coalescing process from Definition \ref{def:BCPRE} with $\sigma=0$.
 
 Assume $\beta^*<\infty$. Then 
 \begin{enumerate}
  \item If $\alpha_{\mathfrak s}\alpha^* + w<c\beta^*$, then $Z$ is positive recurrent.\label{item:geneticdriftwins_genealogy}
  \item If $\alpha_{\mathfrak s}\alpha^*+w>c\beta^*$, then $\P(Z(t)\leq M)\rightarrow 0$, as $t \rightarrow \infty$, for all $M\in \N$ i.e.\ $Z$ is null-recurrent or transient.
 \end{enumerate}
 In the case of \;\ref{item:geneticdriftwins_genealogy}., the generating function of $\nu$, the invariant distribution of the positive recurrent $Z$, is given by $\varphi_{\nu}(x)=\P_x(X_{\infty} = 1)$.
\end{corollary}

\section{Proofs}\label{sec:proofs}
We collect in this section mainly the proofs of the results presented in the previous sections. Each subsection corresponds to a section above.

\subsection{Individual based models for a finite population} \label{subsec:proofs_discrete}
The first observation in this section was the precise transition probabilities of the discrete allele-frequency process.

\begin{proof}[Proof of Proposition \ref{pr:discretefreq}] 
Conditionally on the environment $\bar Y = \bar y$, the proof follows the same steps as in Corollary 2.6 of \cite{ADSelection},  the main idea of which we recall briefly here. The individual $(g,i)$ inherits the weak type 0 if and only if it chooses all its $K_{(g,i)}$ potential parents to be of type 0. Hence, the  probability of this event is given by $\E[x^{K_{(g,i)}}\mid \bar y]$. If generation $g$ does not see an extreme reproductive event (which happens with probability $1-c$) all individuals choose all their potential parents independently and the total number of individuals obtaining all type 0 potential parents is binomial with parameters $N$ and 
$$\E[x^{K_{(g,i)}}\mid \bar y]=\varphi_{y_g}(x).$$
For generations with extreme reproductive events (occurring with probability $c$), first $V \sim \Lambda$ and $I_g\sim \mathcal U_{[N]}$ are sampled. $I_g$ will be the label of the preferred individual.
For each of its choices of  potential parents, each individual then  flips a coin with (the same) random bias $V$. In the case of success, it will choose the preferred individual as a potential parent, i.e. label $I_g$.
The random variable $B_x$ is precisely the indicator of the event that the special parent $I_g$  carries type 0, conditionally on the frequency $x$ of 0-type individuals in the previous generation $g-1$. 
If the $V$-coin returns a failure, the individual chooses this potential parent uniformly in $[N]$ and hence the probability of choosing a weak one is $x$. Therefore, the probability of choosing \emph{one} weak potential parent is given by 
\begin{equation*}
  VB_x + (1-V)x.
\end{equation*}
Consequently,  since an individual is weak only if \emph{all} his potential parents  are  of the weak type  0, the  probability for this even is
\begin{align*}
 \varphi_{y_g}\big(VB_x + (1-V)x\big).
\end{align*}
Since the individuals act independently conditioned on $I_g$ and $V$, the number of weak individuals in the next generation, conditioned on $I_g$ and $V$ is a binomial  with parameters $N$ and $\varphi_{y_g}\big(VB_x + (1-V)x\big)$.
\end{proof}

Next we move to the proof of the sampling dualities between the 0-allele-frequency  process given in Definition \ref{def:discrete_frequency} and the genealogy given in Definition \ref{def:discrete_genealogy}. They are a direct consequence of the graph-construction, as observed in \cite{M99}. We begin with the proof of Proposition \ref{prop:hypergeometricsampling}, as it uses the same idea as the proofs of Lemma \ref{lem:consampleduality} and Proposition \ref{prop:iidduality}, but in a simpler set-up. 

The proofs of Lemma \ref{lem:consampleduality} and Proposition \ref{prop:iidduality} follow the same  idea, except that now we consider sampling  probabilities when the frequency is given in the \emph{previous} generation. Since the random environment acts between generations, it will appear explicitely in our calculations here. Also, since the probability of an individual being of the  0 type is then determined by the types  of its potential parents and hese are sampled with replacement, the resulting sructure is that of sampling \emph{with} replacement and thus obtains the form of a genertating function. 
\begin{proof}[Proof of Proposition \ref{prop:hypergeometricsampling}]
 Note that $\tilde H_N$ defined in \eqref{eq:diskretesamplingdualitywithoutreplacemen_functiont} gives the probability that $n \leq N$ individuals sampled uniformly without replacement in a given generation are all of type 0 if the frequency of type  0 is $x\in [N]_0/N$ \emph{in that same generation}.  In order to prove \eqref{eq:diskretesamplingdualitywithoutreplacement}, we show hat both sides express the same quantity, namely the probability, that $n\leq N$ individuals sampled uniformly without replacement from a generation $s \in \Z$  are all of type 0, given that the frequency of 0 was $x\in [N]_0/N$ in generation $r<s$, which we denote by $p(x,n)$.
 
 First, we calculate this probability using the frequency process: We know that the frequency of the 0-allele in generation $s$ is given by $X^{N,r}(s)$, i.e. that there are $X^{N,r}(s)N$ many weak individuals and thus 
 \begin{align*}
  p(x,n)= \E\left[\prod_{k=1}^n\frac{X^{N,r}(s)N-k}{N-k}\;\bigg\vert\;X^{N,r}(r)=x\right]  = \E[\tilde H_N(X^{N,r}(s),n)\mid X^{N,r}(r)=x].
 \end{align*}
 On the other hand, we can also calculate it using the fact that the individuals in generation $s$ will only be of type 0 if all their ancestors in generation $r$ are  of type 0, i.e. using the ancestral process. The frequency of type 0 individuals in generation $r$ is $x$ and thus
 \begin{align*}
  p(x,n)=\E\left[\prod_{k=1}^{Z^{N,s}(r)}\frac{xN-k}{N-k}\;\bigg\vert\;Z^{N,s}(r)=n\right]  = \E[\tilde H_N(x,   Z^{N,s}(r))\mid Z^{N,s}(r)=n],
 \end{align*}
 which completes the proof.
\end{proof}

\begin{proof}[Proof of Lemma \ref{lem:consampleduality}]
To prove equality \eqref{eq:sdual}, we show that both sides express the same quantity. Denote by $p(\bar Y; x,n)$ the conditional probability, given the random environment $\bar Y$, that $n\leq N$ individuals, sampled uniformly at random without replacement from generation $s\in \Z$, are all of type 0, given that the frequency of 0-type individuals was $x\in [N]_0/N$ in generation $r<s$. 
Without loss of generality, we may assume that the sample in generation $s$ consists of the individuals with the labels $1, \ldots, n$ and therefore
\begin{align*}
 p( \bar Y; x,n):= \P\left((s, 1), \ldots, (s,n) \text{ are all of type }0\mid X^{N,r}(r)=x, \bar Y\right).
\end{align*}
We will calculate this probability in two ways: first, by considering the choices of potential parents of the individuals in our sample $(s, 1), \ldots, (s,n) $ in the previous generation. Second, by tracing their potential ancestors further back in time up until generation $r+1$ and then considering the choices of these potential ancestors. We will, thus, be working both with the frequency process $X^{N,r}$ as well as the genealogical process $Z^{N,s}$. Before we come to these calculations let us make the following two observations similar to the calculations done in the proof of Proposition \ref{pr:discretefreq}.

For any generation $g \in \Z$, conditionally on $Y_g$ the random variables $K_{(g, 1)}, \ldots, K_{(g, n)}$ are independent and identically distributed with distribution $Q(Y_{g})$. Hence, for any $\tilde x \in [0,1]$
\begin{equation}
 \E[ {\tilde x}^{\sum_{i=1}^n K_{(g,i)}} \mid \bar Y ] =\left( \E[\, {\tilde x}^{K_{(g,1)}}\mid \bar Y]\right)^n = \left(\E[\, \varphi_{Y_g}(\tilde x)\mid \bar Y]\right)^n = \; \big(\varphi_{Y_g}(\tilde x)\big)^n. 
 \label{pgfid}
\end{equation}
 Recall the independent  variables $V \sim \Lambda$ and $B_{\tilde x} \sim \mathrm{Ber}_{\bar x}$ introduced for \eqref{eq:conditional_sampling_duality_function} and let $(V_g, g \in \Z)$, be \textit{iid} copies of $V$ and $(B_{\bar{x}, g}, g \in \Z)$ be \textit{iid} copies  of $B_{\bar{x}}$, independent of everything else. Then, similarly to the last observation also 
\begin{align}\label{pgfid2}
 \E\bigg[\big((1-V_g)\tilde x +& V_g(B_{\tilde x, g})\big)^{\sum_{i=1}^nK_{(g,i)}}\mid  \bar Y\bigg]  = \E\bigg[\left(\varphi_{Y_g}\big((1-V_g)\tilde x + V_g(B_{\tilde x, g})\big)\right)^n\mid  \bar Y\bigg].
\end{align}
All equalities between random variables in the following hold $\P$-a.s.
 
We now first calculate $p(\bar Y; x,n)$  with the following idea: all individuals in our sample are of type 0, if and only if all the potential parents they choose in the previous generation are of type 0 and the frequency of type 0 in that generation is given by $X^{N,r}(s-1)$. 
Observe that since $K_{(s, 1)}, \ldots, K_{(s, n)}$  are also independent of $X^{N,r}(r)$, $X^{N,r}(s-1)$, $V_s$ and $\xi(I_s)$, this implies :
 \begin{itemize}
  \item[(i)] In a generation $s$ without skewed offspring distribution, the probability for all $(s,i)$, $i=1,\ldots, N$, to be weak, conditional on the frequency of the weak type in the previous generation $X^{N,r}(s-1)$, using \eqref{pgfid} is  
  \begin{align*}
	\E& \left[\left[X^{N,r}(s-1)\right]^{\sum_{i=1}^nK_{(s,i)}}\mid X^{N,r}(s-1), X^{N,r}(r) =x, \bar Y\right] \\
	  &= \E\bigg[\left(\varphi_{Y^N_s}(X^{N,r}(s-1))\right)^n\,\bigg|\,  X^{N,r}(s-1),  X^{N,r}(r)=x,\bar Y\bigg]
  \end{align*}
  Denote this probability by $p_1(X^{N,r}(s-1), \bar Y; x,n)$.
  \item[(ii)] In a generation with skewed offspring distribution, recall that $I_s$ denotes the favoured individual (special marked label) collectively chosen at time $s$ and $\xi(I_s)$ its type.  By construction, the distribution of $1-\xi(I_s)$ conditional on the frequency of weak in the previous generation is Bernoulli with parameter $X^{N,r}(s-1)$. Hence, in such a generation, the conditional probability for all $(s,i)$, $i = 1, \ldots, n$, to be weak, conditional on the frequency of the weak type in the previous generation is, by \eqref{eq:genrepr}, \\ 
  \begin{align*}
    \E	&\left[\left[(1-V_s)X^{N,r}(s-1) + V_s(1-\xi(I_s))\right]^{K_{(s,i)}}\mid X^{N,r}(s-1), X^{N,r}(r)=x, \bar Y\right]\\
	& = \E\bigg[\Big(\varphi_{Y^N_s}\big((1-V_s)X^{N,r}(s-1) + V_s(B_{X^{N,r}(s-1)})\big)\Big)^n\,\bigg|\,X^{N,r}(s-1),  X^{N,r}(r)=x,\bar Y\bigg].
  \end{align*}
Denote this probability by $p_2(X^{N,r}(s-1), \bar Y, x)$.
 \end{itemize}
Combining this, we obtain
 \begin{align*}
 p(\bar Y; x,n)&  
							 = (1-c)\E\left[p_1(X^{N,r}(s-1), \bar Y; x)\,\bigg|\,  X^{N,r}(r)=x,\bar Y\right]\\
							 & \quad +  c\ \E\left[p_2(X^{N,r}(s-1), \bar Y; x)\,\bigg|\,  X^{N,r}(r)=x,\bar Y\right] \\
%
							&=(1-c) \E\bigg[\left(\varphi_{Y^N_s}(X^{N,r}(s-1))\right)^n\,\bigg|\,  X^{N,r}(r)=x,\bar Y\bigg]	\\
							&\quad + c\ \E\bigg[\Big(\varphi_{Y^N_s}\big((1-V_s)X^{N,r}(s-1) + V_s(B_{X^{N,r}(s-1)})\big)\Big)^n\,\bigg|\,  X^{N,r}(r)=x,\bar Y\bigg]	\\
							&= \E\left[H(X^{N,r}(s-1),n; s, \bar Y) \mid X^{N,r}(r)=x, \bar Y\right],
 \end{align*}
 which is the left-hand-side of the statement of Lemma \ref{lem:consampleduality}.
 
 On the other hand, we can calculate the $p(\bar Y^N;x,n)$  by looking at the number of potential ancestors of the sample in generation $r+1$: these, too, need to be all of the weak type 0 in order for our sample to be of type 0. This in turn is only possible if all \emph{their} potential ancestors in the previous generation $r$ are of the weak type, the frequency of which is given by $x$. Note that we can add a condition on the event $Z^{N,s}(s)=n$ to $p(\bar Y; x,n)$ without altering the probability. These two observations mean 
 \begin{align*}
  p(\bar Y; x,n)&  = \P\left((s, 1), \ldots, (s,n) \text{ are all of type }0\mid Z^{N,s}(s)=n, X^{N,r}(r)=x, \bar Y\right).\\
   &= \P(A^N_{s-r-1}(\{(s,1),\ldots, (s,n)\}) \text{ are all of type 0} \mid X^{N,r}(r)=x, \bar Y)
 \end{align*}
 The number of potential ancestors of $(s,1), \ldots, (s,n)$ in generation $r+1$ is 
 \begin{equation*}
 \vert A^N_{s-r-1}(\{(s,1),\ldots, (s,n)\})\vert = Z^{N,s}(r+1)  
 \end{equation*}
 and the frequency of the weak type 0 in generation $r$ is, by assumption, equal to $x$. Since each of the individuals in $A^N_{s-r-1}(\{(s,1),\ldots, (s,n)\})$ chooses its potential parents independently, using the same observations as in the first step the sought probability can be expressed as
\begin{align*}
& p(\bar Y; x) =  (1-c)\E\bigg[x^{\sum_{i=1}^{Z^{N,s}(r+1)} K_{(r+1,i)}}\,\mid Z^{N,s}(s) = n, X^{N,r}(r)=x,  \bar Y\bigg]	\\
		& \qquad + c\ \E\bigg[\big[(1-V_{r+1})x + V_{r+1}(1-\xi(I_{r+1}))\big]^{\sum_{i=1}^{Z^{N,s}(r+1)}K_{(s,i)}} \mid\,  Z^{N,s}(s) = n, X^{N,r}(r)=x, \bar Y\bigg]	\\
		& = (1-c)\E\bigg[\varphi_{Y_g}(x)^{Z^{N,s}(r+1)}\,\mid \,Z^{N,s}(s) = n, \bar Y\bigg]\\
		& \qquad + c\ \E\left[\varphi_{Y_g}\left((1-V_{r+1})x + V_{r+1}(B_x)\right)^{Z^{N,s}(r+1)}\,\mid Z^{N,s}(s) = n, \bar Y\right]\\
									& \ \ = \E\left[H(x, Z^{N,s}(r+1); r+1, \bar Y)\mid Z^{N,s}(s) = n,  \bar Y\right]
\end{align*}
where, again, we have profited from the fact that, conditionally on $\bar Y$,  the vector $(K_{(r+1,i)}:i\in[N])$ has \emph{iid} coordinates and is independent of $Z^{N,s}(r+1)$ and $Z^{N,s}(s)$, along with the observations \eqref{pgfid} and \eqref{pgfid2}.
\end{proof}
We now turn to the proof of the unconditioned duality. 
\begin{proof}[Proof of Proposition \ref{prop:iidduality}]
 The idea of the proof is the same as for Lemma \ref{lem:consampleduality}, but we have to take the \emph{iid} environment carefully into account. Let $g\geq 0$. First note that, by identity in distribution, 
 \begin{align}
  H_{\mu}(x,n)& =  \mathbb{E}[H(x,n;1,\bar Y )]	\notag\\
		& =(1-c)\E[\varphi_{Y_{1}}(x)^n]   + c\  \E\left[\varphi_{Y_{1}}\big((1-V)x + VB_x\big)^n\right]\label{eq:dualityfctn_alt_1}\\
		& =  \mathbb{E}[H(x,n;g+1,\bar Y )]	\notag\\
		& =(1-c)\E[\varphi_{Y_{g+1}}(x)^n]   + c\  \E\left[\varphi_{Y_{g+1}}\big((1-V)x + VB_x\big)^n\right]\label{eq:dualityfctn_alt_g+1}
 \end{align}
and with the new notation
 \begin{align}\label{eq:shift_rhs_iid_duality}
  \E^{n}\left[H_{\mu }(x,Z^{N,0}\left(-g)\right)\right]	& = \E^{n}\left[H_{\mu }(x,Z^{N,g+1}\left(1)\right)\right].
 \end{align}
We again seek to determine the same probability, except that this time we do not condition on the random environment, i.e. we search 
\begin{align*}
 p(x,n)& := \P_x\left((g+1, 1), \ldots, (g+1,n) \text{ are all of type }0\right)
\end{align*}

\noindent As before, this probability can, on one hand, be expressed using $X^{N,0}(g)$ as
 \begin{align*}
  p(x,n)&=(1-c  )\E_x[(X^{N,0}(g))^{\sum_{i=1}^n K_{(g+1,i)}}]	\\
		      &\qquad  + c\  \E_x\bigg[\left((1-V_{g+1}){X^{N,0}(g)}+ V_{g+1} (1-\xi(I_{g+1}))\right)^{\sum_{i=1}^n K_{(g+1,i)}}\bigg]	\\
	& = (1-c  )\E_x\bigg[{\varphi_{Y_{g+1}}(X^{N,0}(g))^n}\bigg]	\\
	& \qquad + c\  \E_x\bigg[\varphi_{Y_{g+1}}\Big((1-V_{g+1})X^{N,0}(g)+ V_{g+1} B_{X^{N,0}(g)}\Big)^n\bigg]	\\
	&
	=\E_x\left[H_{\mu }(X^{N,0}(g),n)\right].
 \end{align*}
For the second equality we used that 
$(K_{(g+1,1)}, \ldots, K_{(g+1,n)}, Y_{g+1})$ and $X^{N,0}(g)$ are independent, and that conditional on $Y_{g+1}$ the $K_{(g+1,1)}, \ldots, K_{(g+1,n)}$ are \emph{iid}, which in particular implies for any $ \tilde x \in [0,1]$:
\begin{align}
 \E[\, {\tilde x}^{\sum_{i=1}^n K_{(g+1,i)}}] 
 = \E[\E[\tilde x^{K_{(g+1,1)}}\mid \bar Y ]^n ] 
 = \E[\varphi_{Y_{g+1}}(\tilde x)^n]
\label{upgfid}\end{align}
and likewise 
\begin{align}\label{upgfid2}
 \E[\, ((1-V_g)&\tilde x + V_g B_{\tilde x,g})^{\sum_{i=1}^n K_{(g+1,i)}}] \\
 & = \E\Big[\E\big[((1-V_g)\tilde x + V_g B_{\tilde x,g})^{ K_{(g+1,1)}}\mid V_g, B_{\tilde x,g}, \bar Y \big]^n \Big]\notag \\
 & = \E[\varphi_{Y_{g+1}}((1-V_g)\tilde x + V_g B_{\tilde x,g})^n ]\notag
\end{align}
Note that the random variables $K_{(g+1,1)}, \ldots, K_{(g+1,n)}$ are normally only \emph{conditionally} independent, given a realisation of $Y _{g+1}$, which is why we cannot, in the unconditioned case, write the duality function as a product of expectations.

\noindent On the other hand, we can calculate the same probability $p(x,n)$ considering the ancestry with $Z^{N,g+1}(1)$ through 
\begin{align*}
 p(x) & = (1-c  )\E[x^{\sum_{i=1}^{Z^{N,g+1}(1)} K_{(1,i)}}]  + c\  \E_x\bigg[\left((1-V_{1})x+ V_{1} (1-\xi(I_{1}))\right)^{\sum_{i=1}^{Z^{N,g+1}(1)} K_{(1,i)}}\bigg]	\\
	& = (1-c  )\E_x\bigg[\varphi_{Y_{1}}(x)^{Z^{N,g+1}(1)}\bigg]	 + c  \E_x\bigg[\varphi_{Y_{1}}\Big((1-V_{1})x+ V_{1} B_{x}\Big)^{Z^{N,g+1}(1)}\bigg]	\\
	& = \E[H_{\mu }(x, Z^{N,g+1}(1))]
\end{align*}
where we again profited from the independence of $Z^{N,g+1}(1)$ and $(K_{(1,1)}, \ldots, K_{(1,N)}, \bar Y)$ and the observations \eqref{upgfid} and \eqref{upgfid2} with $g+1$ replaced by $1$.
Hence, we have proven
\begin{align*}
\E_{x}\left[H_{\mu }\left(X^{N,0}(g),n\right)\right] = \E^{n}\left[H_{\mu }(x,Z^{N,g+1}\left(1)\right)\right]
\end{align*}
which together with \eqref{eq:shift_rhs_iid_duality} completes the proof.
\end{proof}

\subsection{Scaling limits}

We begin with the proof of existence and uniqueness of a solution to the SDE.
\begin{proof}[Proof of Lemma \ref{lem:SDEXsolution}]
For the case without the third summand in the generator \eqref{eq:SDEX} the statement of Lemma \ref{lem:SDEXsolution} was already proved in \cite[Cor. 3.3]{caballero}. For the general case, the proof is an application of Theorem 5.1 in \cite{LiPu}, of which we only need to verify conditions  $3a)$, $3b)$ and $5a)$. Using the notation of \cite{LiPu}, in our case the relevant functions are $\sigma(x)= \sqrt{\sigma x(1-x)}\1_{[0,1]}(x)$, $b(x)=wx(1-x)$, $g_0(x,(z,u))=z(\1_{[0,x]}(u)-x)\1_{[0,1]}(x)$ for $U_0=[0,1]^2$ and $g_1(x,y)=\E[x^{K_y}-x]\1_{[0,1]}(x)$ for $U_1=[0,1]$. 
All conditions are easy to check for $g_0$ and were shown for $b$ and $\sigma$ in Equation (3.a) and (26) of \cite{ADSelection}, respectively, so we only need to concern ourselves with the selection component $g_1$.

Note that $\vert\frac{\partial g_1}{\partial x}(x,y)\vert=\vert\E[K_yx^{K_y-1}-1]\vert\leq\max\{\P(K_y\geq 2),\E[K_y-1]\}\leq\E[K_y-1]$, for all $x \in [0,1]$. Then, by the mean value theorem, for any $x_1,x_2\in[0,1]$ 
\begin{align*}
\int_{[0,1]}|g_1(x_1,y)-g_1(x_2,y)|\mu(\dd y) \leq |x_1-x_2|\int_{[0,1]} \E[K_y-1]\mu(\dd y)
\end{align*} 
Since the integral is finite by Condition \ref{MasterCondition}, this gives us condition $3a)$ of \cite{LiPu}.
The observation above also shows that $\frac{dg_1}{dx}$ is $(\mu(\dd y)\otimes \dd x)$-integrable on $[0,1]\times [0,1]$, because
\begin{align*}
\int_{[0,1]\times [0,1]}\left\vert \frac{\partial g_1}{\partial x}(x,y)\right\vert \mu(\dd y)\otimes \dd x\leq \int_{[0,1]} \E[K_y-1] \mu(dy)<\infty,
\end{align*}
whence we can use Fubini's Theorem and, using the fact that  $g_1(x,y)^2\leq \vert g_1(x,y)\vert$, estimate 
\begin{align*}
\int_{[0,1]} \left( g_1(x,y)\right)^2\mu(\dd y) 	& \leq \int_{[0,1]} \int_{[0,x]} \left\vert \frac{d g_1}{ds}(s,y)\right\vert\ \dd s \ \mu(\dd y)\\
					& 
					\leq x  \int_{[0,1]} \E[K_y-1] \mu(\dd y)
 \end{align*}
for $x \in [0,1]$, which yields the estimates for condition $5a)$ of \cite{LiPu}.
\end{proof}

We now move to the proof of convergence of the discrete frequency process defined in Definition \ref{def:discrete_frequency} to the scaling limit \eqref{eq:SDEX} for $w=0$, i.e with only rare selection, started in $X_0=x$
\begin{proof}[Proof of Theorem \ref{lem:converges}]
We will prove strong convergence of the sequence of generators corresponding to $X^N,N\in \N$ to the generator of $X$ on $C^2([0,1])$. 
Our claim then follows from Theorem 19.28 in \cite{Kallenberg} (Theorem 17.28 in the other editions)which however requires the limit process $X$ to be also Feller. The Feller property is established in Section \ref{subsec:diffusionconvergence}, Lemma \ref{lem:fellerX} and will be assumed true in this proof. \\
Denote with $\mathcal{A}^N$ the discrete generator of $(X^N(\lfloor \rho_N^{-1}t \rfloor): t \geq 0)$. We calculate $\mathcal{A}^N$  conditioning on the four possible cases reflecting occurrence vs non-occurrence of selection or multiple mergers. To this end, starting without loss of generality from generation $g_0=0$, let $B_{\mathfrak c}$ and $B_{\mathfrak s}$ be the random variables indicating whether selection, resp.\ multiple mergers occur in the first time-step. By construction of the reproduction mechanism, they are independent Bernoulli random variables with success parameter $c_N$ and $\rho_N\gamma_N([0,1])$ respectively. For $C_2$ functions $f:[0,1]\mapsto \R$ the generator then acts according to the rule
\begin{align}
\label{eq:convergenceproof_prelimit_generator}
\mathcal{A}^Nf(x) 	& := \rho_N^{-1}\E_x\left[f(X^N_1) -f(x)\right] \\
			& = \rho_N^{-1}\bigg\{(1-\rho_N\gamma_N([0,1]))(1-c_N) \left(\frac{1}{2N}x(1-x)f''(x) + O\left(\frac{1}{N^2}\right)\right)  \notag\\
			& \qquad \qquad + (1-\rho_N\gamma_N([0,1]))c_N\E_x\left[f(X^N_1) -f(x)\big|\, B_{\mathfrak c}=1,B_{\mathfrak s}=0\right] \notag\\
			& \qquad \qquad + \rho_N\gamma_N([0,1])(1-c_N)\E_x\left[f(X^N_1) -f(x)\big|\, B_{\mathfrak c}=0,B_{\mathfrak s}=1\right] \notag\\
			& \qquad \qquad + \rho_N\gamma_N([0,1])c_N\E_x\left[f(X^N_1) -f(x)\big|\, B_{\mathfrak c}=1,B_{\mathfrak s}=1\right]\bigg\}.\notag
\end{align}

The first summand corresponds to the standard neutral Wright-Fisher model without multiple mergers, hence assumption \eqref{eq:assumption_for_WF} implies the convergence to the first component of the generator given in \eqref{eq:generatorX}. 

Since the last expectation is bounded, assumption \eqref{eq:unnecessary_assumption} implies that the last summand vanishes as $N\to\infty$, so we only need to calculate carefully the remaining two summands:

For the first of these two -- the case of only skewed offspring distribution -- observe that, since $B_{\mathfrak c}=1,B_{\mathfrak s}=0$, by virtue of the reproduction mechanism \eqref{eq:genrepr}, for a given initial frequency $X^N_0=x$, strength of correlation (multiple mergers) $C^N_1=u\in [0,1]$ and type $B=b\in\{0,1\}$ of the favoured parent, $NX^N_1$ is simply a Binomial random variable with parameters $N$ and $(1-u)x+u(1-b)$. Also note that given $X^N_0=x$, $B$ is a Bernoulli random variable with success parameter $1-x$.

\noindent Apply Taylor's expansion to $f$ around  $(1-C^N_1)x+C^N_1(1-B)$. We obtain, using the Lagrange remainder term, for an $\eta$ depending on $X^N_1, C^N_1, B$ and $ x$,
\begin{align*}
 \E_x&\big[f(X^N_1) -f(x)\big|\, B_{\mathfrak c}=1,B_{\mathfrak s}=0\big]	\\
			& = \E_x\Big[f\big((1-C^N_1)x + C^N_1(1-B)\big) - f(x)\Big| B_{\mathfrak c}=1,B_{\mathfrak s}=0\Big] \\	
			& \qquad \qquad \qquad + \E_x\Big[\frac{1}{2}f''(\eta)\big(X^N_1 - (1-C^N_1)x - C^N_1(1-B)\big)^2\Big| B_{\mathfrak c}=1,B_{\mathfrak s}=0\Big]
\end{align*}
Using the independence of $U^N_1$, $B$, $B_{\mathfrak{c}}$, and $B_{\mathfrak s}$, we can rewrite the first term:
\begin{align*}
 \rho_N^{-1}&c_N\E_x\Big[f\big((1-C^N_1)x + C^N_1(1-B)\big) - f(x)\Big| B_{\mathfrak c}=1,B_{\mathfrak s}=0\Big] \\	
			& =  \frac{\rho_N^{-1}c_N}{\Lambda^{\alpha}_N([0,1])}\int_{[0,1]}\big(xf((1-u)x+u)+(1-x)f((1-u)x) -f(x)\big)\frac{1}{u^2}\1_{[N^{-\alpha},1]}(u)\Lambda_{\mathfrak c}(\dd u)\\
			& \xrightarrow{N\rightarrow \infty} c \int_{[0,1]}\big(xf((1-u)x+u)+(1-x)f((1-u)x) -f(x)\big)\frac{1}{u^2}\Lambda_{\mathfrak c}(\dd u).
\end{align*}
The convergence in the last step follows from \eqref{eq:assumptions_on_skewedoffspring} and is uniform in $x\in[0,1]$.
\\ Using the conditional variance of $NX^N_1$ given $C^N_1$ and $B$ allows us to estimate 
\begin{align*}
 \bigg\vert&\E_x\Big[\;\frac{1}{2}f''(\eta)\big(X^N_1  - (1-C^N_1)x - C^N_1(1-B)\big)^2\Big| B_{\mathfrak c}=1,B_{\mathfrak s}=0\Big] \bigg\vert\\
	& \leq \frac{1}{2}\sup_{z \in [0,1]}\vert f''(z)\vert \bigg\vert\E_x\left[\;\frac{N\big(1  - (1-C^N_1)x - C^N_1(1-B)\big)\big( (1-C^N_1)x + C^N_1(1-B)\big)}{N^2}\right]\bigg\vert\\
	& = \frac{1}{2}\sup_{z \in [0,1]}\vert f''(z)\vert\frac{1}{N\Lambda^{\alpha}_N([0,1])}\\
	& \qquad \qquad \times\big\vert\int_{[N^{-\alpha},1]}{\E_x\left[\;\big(1  - (1-u)x - u(1-B)\big)\big( (1-u)x + u(1-B)\big)\right]}\frac{1}{u^2}\Lambda_{\mathfrak c}(\dd u) \big\vert\\
	&\leq \frac{1}{2}\sup_{z \in [0,1]}\vert f''(z)\vert N^{-1}.
\end{align*}
In the second step we simply used the independence of $C^N_1$ and $B$ as well as the definition of $\bar \Lambda^{\alpha}_N$ and in the last that the expectation can be bounded by 1. 
Since $(C_N)_{N\in\N}$ converges to 0, assumption \eqref{eq:assumption_for_WF} implies that this term vanishes. Note that also this bound is uniform in $x\in [0,1]$.

In the same spirit as for the skewed offspring distribution, given that we only observe rare selection, i.e. $B_{\mathfrak c}=0,B_{\mathfrak s}=1$, the distribution of $NX^N_1$ conditioned on a strength of selection $Y^N_1=y\in[0,1]$ is again binomial with parameters $N$ and $\E[x^{K_y}]$. Applying Taylor's formula in this point one obtains for an $\eta$ depending on $X^N_1$, $Y^N_1$ and $x$:
\begin{align*}
 \E_x&\left[f(X^N_1) -f(x)\big|\, B_{\mathfrak c}=0,B_{\mathfrak s}=1\right] \\
      & = \E_x\left[f(\E[x^{K_{Y^N_1}}\mid Y^N_1]) -f(x)\big|\, B_{\mathfrak c}=0,B_{\mathfrak s}=1\right] \\
	& \qquad + \E_x\left[\frac{1}{2} f''(\eta)\big(X^N_1 - \E[x^{K_{Y^N_1}}\mid Y^N_1]\big)^2 \big|\, B_{\mathfrak c}=0,B_{\mathfrak s}=1\right].
\end{align*}
Again, using the conditional variance of $NX^N_1$ given $Y^N_1$, we estimate
\begin{align*}
 \E_x&\left[\frac{1}{2} f''(\eta)\big(X^N_1 - \E[x^{K_{Y^N_1}}\mid Y^N_1]\big)^2 \big|\, B_{\mathfrak c}=0,B_{\mathfrak s}=1\right]\\
  & \leq \frac{1}{2}\max_{z \in [0,1]}f''(z)\frac{1}{N}\E_x\left[\E[x^{K_{Y^N_1}}\mid Y^N_1]\big(1 - \E[x^{K_{Y^N_1}}\mid Y^N_1]\big) \big|\, B_{\mathfrak c}=0,B_{\mathfrak s}=1\right]\\
   & \leq \frac{1}{2}\max_{z \in [0,1]}f''(z)\frac{1}{N}
\end{align*}
since the last expectation is bounded by 1. Since $\rho_N\gamma_N([0,1])$ vanishes,  assumption \eqref{eq:assumption_for_WF} proves that this term also vanishes in the limit.\\
To calculate the first expectation on the other hand, note that conditioned on $B_{\mathfrak s}=1$, $Y^N_1$ is distributed according to $\bar \gamma_N$. Hence,
\begin{align*}
  \gamma_N([0,1])\E_x&\left[f(\E[x^{K_{Y^N_1}}\mid Y^N_1]) -f(x)\big|\, B_{\mathfrak c}=0,B_{\mathfrak s}=1\right] \\
      & = \gamma_N([0,1])\int_{[0,1]} (f(\E[x^{K_y}])-f(x)) \bar\gamma_N(\dd y) \\
      & = \int_{[0,1]} (f(\E[x^{K_y}])-f(x)) \1_{I_N}(y)\mu(\dd y) \xrightarrow{N\rightarrow \infty} \int_{[0,1]} (f(\E[x^{K_y}])-f(x))\mu(\dd y).
\end{align*}
To see that this convergence is uniform in $x \in [0,1]$, recall Condition \ref{MasterCondition} and observe that using the mean-value-theorem 
\begin{align*}
 \vert \int_{[0,1]} &(f(\E[x^{K_y}])-f(x)) \1_{I^c_N}(y)\mu(\dd y) \vert \\
    & = \vert \int_{[0,1]} (f(\E[x^{K_y}])-f(x)) \1_{I^c_N}(y)\frac{1}{\E[K_y -1]}\Lambda_{\mathfrak s}(\dd y)\vert \\
    & \leq \max_{z\in [0,1]}\vert f'(z)\vert  \int_{[0,1]}\vert\E[x^{K_y}]- x\vert \1_{I^c_N}(y)\frac{1}{\E[K_y -1]}\Lambda_{\mathfrak s}(\dd y).
\end{align*}
Since $K_y$ only takes integer values, as observed in the proof of Lemma \ref{lem:SDEXsolution}, $\vert\E[x^{K_y}]- x\vert\leq  \E[K_y -1]$ and the claim follows since $\Lambda_{\mathfrak s}$ is a distribution.

Therefore we have proven that the assumptions of the theorem  ensure $\mathcal A^N f \rightarrow \mathcal Af$ uniformly on the compact set $[0,1]$. Hence, using Theorem 19.28 in \cite{Kallenberg} (Theorem 17.28 in the other editions) we conclude the  desired weak convergence.
\end{proof}

We now move to the proof of convergence of the discrete frequency process defined in Definition \ref{def:discrete_frequency} to the scaling limit \eqref{eq:SDEX} with weak and rare selection.
\begin{proof}[Proof of Theorem \ref{thm:convergence_rare_and_weak}]
 The proof is exactly analogous to that of Theorem \ref{lem:converges}. As done there, one can condition, with the help of suitable Bernoulli random variables, on whether or not now \emph{rare} selection or multiple mergers occur. For the first summand in \eqref{eq:convergenceproof_prelimit_generator}, however, one now calculates
\begin{align*}
 \E_x&\big[f(X^N_1) -f(x)\big|\, B_{\mathfrak c}=0,B_{\mathfrak s}=0\big] \\
      & = -w_N\frac{x(1-x)}{1-xw_N}f'(x) + \frac{1}{2N}x(1-x)f''(x)\frac{1-w_N}{(1-xw_N)^2} + O(w^2_N)+  O(N^{-2})
 \end{align*}
 (doing a Taylor expansion around $x$).
 The three other summands coincide with those in the proof of Theorem \ref{lem:converges} and the claim follows.
\end{proof}

After having proved the results for the forward frequency process, we now turn to the proofs on the branching coalescing process. We begin by showing that it is Feller and conservative under our main Condition \ref{MasterCondition}.

\begin{proof}[Proof of Lemma \ref{lem:fellerZ}]
To prove that $Z$ is Feller we have to show that its semigroup given by $P_tf(n)=\mathbb{E}_n[f(Z_t)]$ sends $C$ to $C$. (Recall that we consider $\N\cup\{\infty\}$ with the metric of the harmonic numbers and are thus on a compact, where $C$, $C_b$ and $C_0$ coincide.) Note that, for such a continuous function $f$, only the continuity of $p_tf$ in $\infty$ needs to be checked. 

In order to do that we will construct a coupling of $Z$ started at any possible number of initial particles/lineages. To this end, for any finite number of initial particles $l\in \N$ we define the process $(B^{(l)}(t): t\geq 0)$ with $B^{(l)}(t):=(B^{(1,l)}_t, B^{(2,l)}_t,...,B^{(l,l)}_t )$  with starting condition $(B^{(1,l)}_0, B^{(2,l)}_0,...,B^{(l,l)}_0 )=(1,1,..,1)$. Each of the $B^{(i,l)}$ describes the individuals resulting from one initial particle in $Z$, i.e. it traces the offspring numbers of that one individual resulting from (spontaneous or coordinated) branching events and (pairwise or coordinated) coalescence events. While the former can be assigned  to each individual, the coalescence events require interaction between the $B^{(i,l)}, i,j =1,  \ldots, l$, as is reflected in the following rates. $(B^{(l)}(t): t\geq 0)$ branches from $(n_1,...,n_l)\in \N\times\N_0\times\cdots\times\N_0$ to $(n_1+k_1,...,n_l+k_l)$ given by
\begin{equation*}
\int_{[0,1]} \prod_{i=1}^l\Prob{\sum_{j=1}^{n_i}K_{y,j}=n_i+k_i} \mu (\dd y) + \sum_{i=1}^l wn_i\delta_{e_i,(k_1,...,k_l)}
\end{equation*}
which corresponds to each individual independently following its own (coordinated and spontaneous) branching mechanism. Coalescence events allow the process $(B^{(l)}(t): t\geq 0)$ to go from a state  $(n_1,...,n_l)$ to $(n_1-k_1,...,n_l-k_l)+e(k_1, \ldots, k_l)$, where $e(k_1, \ldots, k_l)$ is the unit vector in $\min\{j\mid  k_j >0\}$. A coalescence can thus reduce the number of lineages in each $B^{(i,l)}$ and will assign the surviving lineage to the offspring group with the lowest label. The rate for this event is
\begin{align*}
c\int_{[0,1]}&\prod_{i=1}^l\binom{n_i}{k_i}  y^{k_i}(1-y)^{n_i-k_i} \frac{\Lambda_{\mathfrak c}(\dd y)}{y^2} \\
	      & \qquad \qquad \qquad + \sum_{i=1}^l\sigma \binom{n_i}{2}\delta_{2e_i,(k_1,...,k_l)}+ \sigma \sum_{i=1}^l \sum_{j=1}^{i-1}n_in_j\delta_{e_i+e_j,(k_1,...,k_l)}.
\end{align*}
The first term accounts for the coordinated coalescences across all offspring populations; the second accounts for pairwise coalescences within the same offspring group and the last term for pairwise coalescences across offspring populations. 

\noindent Note that the processes are constructed such that a $B^{(i,l)}$ will always only notice the action of or interaction with a $B^{(j,l)}$ of a lower label $j \leq i$. Thus the laws of the processes $(B^{(l)}(t): t\geq 0)$ form  consistent family in $l \in \N$ and by Kolmogorov's extension theorem can be extended to a process  $(B^{(\infty)}(t): t \geq 0)$ with $B^{(\infty)}(t):=(B^{(1)}(t), B^{(2)}(t),\ldots)$. 

\noindent This gives the desired coupling of $Z$ started with different initial conditions in the following way: If we denote by $Z^{(n)}$ the process $Z$ starting in $n$, then, in distribution,
\begin{align*}
  Z^{(n)}_t\overset{\mathcal L}{=}\sum_{i=1}^{n}B_t^{(i)} \text{\quad and\quad}Z^{(\infty)}_t\overset{\mathcal L}{=}\sum_{i=1}^{\infty }B_t^{(i)}. 
\end{align*}
Note that, by construction $\lim_{n \rightarrow \infty}\sum_{i=1}^{n}B_t^{(i)}=\sum_{i=1}^{\infty }B_t^{(i)}$,   almost surely. Hence, for $f$ continuous on $N\cup\{\infty\}$, we get
\begin{align*}
\vert P_tf(n) - P_tf(\infty)\vert & = \vert \mathbb{E}^n[f(Z(t))]-\mathbb{E}^\infty[f(Z(t))] \vert 	 = \vert \mathbb{E}[f(Z^{(n)}(t))]-\mathbb{E}[f(Z^{(\infty)}(t))] \vert\\
								& = \bigg\vert \mathbb{E}\left[f\left(\sum_{i=1}^{n}B^{(i)}(t)\right)-f\left(\sum_{i=1}^{\infty}B^{(i)}(t)\right)\right]\bigg\vert \xrightarrow{n \rightarrow \infty} 0
\end{align*}
by bounded convergence, which implies that $P_tf \in C$ and thus that $Z$ is indeed Feller.

We now turn to the proof of conservativeness of $Z$. We will show that the branching mechanism itself is non-explosive, even without the ``help'' of the fast coalescence rate. To this end, let $W=(W(t): t \geq 0)$ be the process with the same branching mechanism as $Z=(Z(t): t \geq 0)$ but without coalescence. It is clear that then stochastically $W(t)\geq Z(t)$ for any $t  \geq 0$.
For any $m \in \N$ define $W^m=(W^m(t): t \geq 0)$ with $W^m(t):= \min\{W(t),m\}$, the branching process truncated at $m$. Since $W$ is non-decreasing, $W^m$ is also Markovian with generator 
\begin{align*}
 Gf(k)	& = \sum_{l = k}^{m-1} \left\{\int_{[0,1]}\P\left(\sum_{j=1}^k K_{y,j}=l\right)\mu(\dd y) + w k \delta_{l-k,1}\right\}\left[f(l) - f(k)\right]\\
	&  \qquad \qquad + \int_{[0,1]}\P\left(\sum_{j=1}^k K_{y,j}\geq m\right)\mu(\dd y)\left[f(m) - f(k)\right]
\end{align*}
where the $K_{y,1}, \ldots, K_{y, k}$ are \emph{iid} with common distribution $Q(y)$. For the identity on $\{1,\ldots, m\}$, this simplifies to
\begin{align*}
 G{\rm Id} (k)	& = \sum_{l = k}^{m-1} \left\{\int_{[0,1]}\P\left(\sum_{j=1}^k K_{y,j}=l\right)\mu(\dd y) + w k \delta_{l-k,1}\right\}\left[l - k\right]\\
	&  \qquad \qquad + \int_{[0,1]}\P\left(\sum_{j=1}^k K_{y,j}\geq m\right)\mu(\dd y)\left[m - k\right]\\
	& =  wk+ \int_{[0,1]} \left\{ \sum_{l = k}^{m-1}\P\left(\sum_{j=1}^k K_{y,j}=l\right)\left[l - k\right]+\P\left(\sum_{j=1}^k K_{y,j}\geq m\right)\left[m - k\right]\right\}\mu(\dd y)\\
	& = wk+ \int_{[0,1]} \E\left[\left(\sum_{j=1}^k K_{y,j}\right)\wedge m - k\right]\mu(\dd y)\\
	& \leq wk+ \int_{[0,1]} \E\left[\sum_{j=1}^k K_{y,j} - k\right]\mu(\dd y)\\
	& = \left(w+ \int_{[0,1]} \E\left[K_{y,1} - 1\right]\mu(\dd y)\right)k
\end{align*}
which is finite by Condition \ref{MasterCondition}.
Using Dynkin's formula we observe that
\begin{align*}
\E^n	& [W^m(t)] - n 	 = \int_0^t \E^n\left[ G{\rm Id}(W^m(s))\right]\dd s\\
	& \leq \int_0^t \left(w+ \int_{[0,1]} \E\left[K_{y,1} - 1\right]\mu(\dd y)\right)\E^n\left[W^m(s)\right]\dd s\\
\end{align*}which with  Gr\"{o}nwall's inequality implies
\begin{align*}
 \E^n[W^m(t)] \leq n \exp\left(t \left\{w+\int_{[0,1]} \E[K_{y,1} - 1]\mu(\dd y)\right\}\right) <\infty.
\end{align*}
Since the bound is uniform in $m\in \N$ and $W^m(t) \uparrow W(t)$ a.s.\ as $m\rightarrow \infty$ for any $t \geq 0$, we have completed the proof.  
\end{proof}

Given these properties of the branching coalescing process $Z=(Z(t):t\geq 0)$, we now prove the moment duality between $Z$ and $X=(X(t):t\geq 0)$.
\begin{proof}[Proof of Lemma \ref{lem:duality}]
The case $n=\infty$ holds by the choice of the entrance law. By the Markov property, for every other $n$,  the proof can be done by calculating the generator applied to the function(s) $f_x(n):=f^n(x):=x^n$. Recall the generator $\mathcal A$ of $X$ from \eqref{eq:generatorX} and let $\mathcal B$ be the generator of $Z$.

Since the moment duality relations between the Wright-Fisher diffusion and the Kingman coalescent, between\ the $\Lambda$-jump diffusion and the $\Lambda$-coalescents and between binary branching and the logistic ODE are well-known \cite{BLG03}, the additive structure of the generator allows us to only consider the component of the generators responsible for \emph{rare} selection, which we denote by $\mathcal A_{\mathfrak s}$ and $\mathcal B_{\mathfrak s}$ respectively. 
 For any $y \in [0,1]$ and $n \in \N$ let $K_{y,j}, j =1,\ldots,n $ be \emph{iid}\ with distribution $Q(y)$. Then, for any $x \in [0,1]$ and $n \in \N_0$
 \begin{align*}
 \mathcal A_{\mathfrak s} f^n(x) 	& =  \int_{[0,1]} \left\{(\E[x^{K_y}])^n-x^n\right\}\ \mu(dy) \\
			& =  \int_{[0,1]} \left\{\E[x^{\sum_{j=1}^nK_{y,j}}]-x^n\right\}\mu(dy)\\
			& =  \int_{[0,1]} \left[\sum_{k=n}^{\infty}x^k\P\left(\sum_{j=1}^nK_{y,j}=k\right)-x^n\right]\mu(dy)\\
			& =  \int_{[0,1]} \left[\sum_{k=n}^{\infty}\P\left(\sum_{j=1}^nK_{y,j}=k\right)\left(x^k-x^n\right)\right] \mu(dy) = \mathcal B_{\mathfrak s}f_x(n)
 \end{align*}
 As mentioned above, by linearity of the operators this implies $\mathcal A f^n(x)  =  \mathcal B f_x(n)$ for any $x \in [0,1]$ and $n \in \N_0$. Hence, since $X$ is a Markovian solution to the martingale problem associated to $\mathcal A$, and Lemma \ref{lem:fellerZ} implies that $Z$ is Feller, the moment duality between $X$ and $Z$ follows immediately (see e.g. Proposition 6.1 in \cite{GCPP}).
\end{proof}

\begin{proof}[Proof of Lemma \ref{lem:fellerX}]
 Let $f(x):=x^n$ for fixed $n \in \N_0$. Let $(\tilde P_t)_{t \geq 0}$ be the semigroup generated by $\mathcal A$ given in \eqref{eq:generatorX}. Then, using the moment duality from Lemma \ref{lem:duality}
 \begin{align*}
  \tilde P_tf(x) = \E_x[f(X(t))]=\E_x[X(t)^n]=\E^n[x^{Z(t)}].
 \end{align*}
 Note that, since $Z$ is conservative, $x \mapsto x^{Z(t)(\omega)}$ is continuous for all $x \in [0,1]$ for $\P$-almost all $\omega \in \Omega$. Hence, by bounded convergence and using duality again
 \begin{align*}
  \lim_{x\rightarrow \bar x} \tilde P_tf(x) = \lim_{x\rightarrow \bar x} \E^n[x^{Z(t)}] = \E^n[{\bar x}^{Z(t)}] = \E_{\bar x}[X(t)^n] = \tilde P_tf(\bar x)
 \end{align*}
 for all $\bar x \in [0,1]$. Therefore, for every $t\geq 0$, $\tilde P_t$ maps monomials to continuous functions. By the linearity of the expectation this also holds for polynomials and by the Stone-Weierstrass-Theorem we have proven the claim.
\end{proof}

Finally, we use the duality and the previous convergence result for the frequency process, to prove the analogous result for the genealogy.
\begin{proof}[Proof of Theorem \ref{thm: ancestral converg}]
The key in this proof is to show that the discrete  semigroup $P^N$ of $(Z^N(\lfloor-\rho_N^{-1}t\rfloor):t\geq 0)$ converges for any $t>0$ strongly to the semigroup $P$ of $Z:=(Z(t):t\geq0)$, when applied to functions of the form $f_\lambda(n)=e^{-\lambda n}$, $\lambda >0$. Since  $Z$ is Feller by Lemma \ref{lem:fellerZ} and the functions $f_{\lambda}$ are dense in the set of functions that are continuous and vanish at infinity in this topology, this implies weak convergence by Theorem 19.28, (ii) of \cite{Kallenberg} (Theorem 17.28 in the other editions).

The convergence of the semigroups will prove to be a consequence of the convergence in Theorem \ref{lem:converges} using the moment duality from Lemma \ref{lem:duality} for $X$ and $Z$ and the sampling duality from Proposition \ref{prop:iidduality} for $X^N$ and $Z^N$. First, observe that the sampling duality approximates the moment duality for large $N$ for our choice of $\mu_N$: conditioning on the choice of distribution for $Y^N_0$, we can rewrite the sampling dualiy function in the notation of Proposition \ref{prop:iidduality} as
\begin{align*}
 H_{\mu_N}(x,n) 	& = (1-c_N)\E[\varphi_{Y_0^N}(x)^n] + c_N\E[\varphi_{Y_0^N}((1-U)x+U(1-B_x))^n]\\
			& = (1-c_N)(1-\rho_N\gamma_N([0,1]))x^n \left(\frac{1-w_N}{1-xw_N}\right)^n \\
			& \qquad \qquad \qquad + \E[\varphi_{Y_0^N}(x)^n\,\mid\,Y^N_0\sim\bar\gamma_N]\rho_N\gamma_N([0,1]) + O(c_N)\\
			& = (1-c_N)(1-\rho_N\gamma_N([0,1]))x^n \left(\frac{1-w_N}{1-xw_N}\right)^n + O(\rho_N\gamma_N([0,1])) + O(c_N)\\
			& = x^n \left(\frac{1-w_N}{1-xw_N}\right)^n + O(\rho_N\gamma_N([0,1])) + O(c_N)
\end{align*}
where we crudely estimated the expectations and the expression in the first summand by 1. Note that this way the convergence of the remainder terms is uniform in $n$ and $x$.
Then the sampling duality can be written as
\begin{align*}
 \E^n&\left[x^{Z^N(-g)}\left(\frac{1-w_N}{1-xw_N}\right)^{Z^N(-g)}\right]\\
      & \hspace{40pt}= \E_x\left[(X^N(g))^n\left(\frac{1-w_N}{1-X^N(g)w_N}\right)^n\right] + O(\rho_N\gamma_N([0,1])) + O(c_N)
\end{align*}
which we can rearrange to
\begin{align}\label{eq:samplig_to_moment}
 \E^n\left[x^{Z^N(-g)}\right] 	& = \E_x\left[(X^N(g))^n\left(\frac{1-w_N}{1-X^N(g)w_N}\right)^n\right]+ O(\rho_N\gamma_N([0,1]))  + O(c_N)\notag  \\
				& \qquad  + \E^n\left[x^{Z^N(-g)}\left(1-\left(\frac{1-w_N}{1-xw_N}\right)^{Z^N(-g)}\right)\right] .
\end{align}
Note that the function $h_N:[0,1] \rightarrow [0,1]$, $h_N(x):= x^n \left(\frac{1-w_N}{1-xw_N}\right)^n$, is continuous and bounded and converges uniformly to $h(x):=x^n$. In addition, the sequence is decreasing in $n$. Hence, the weak convergence of $X^N(\lfloor\rho_N^{-1}t\rfloor) \rightarrow X(t)$ from Theorem \ref{lem:converges}, implies
\begin{align*}
 \E_x\left[(X^N(\lfloor\rho_N^{-1}t\rfloor))^n\left(\frac{1-w_N}{1-X^N(\lfloor\rho_N^{-1}t\rfloor)w_N}\right)^n\right] \xrightarrow{N\rightarrow \infty} \E_x[X(t)^n]
\end{align*}
for any $t\geq 0$ and any $x \in [0,1]$, uniformly in $n$, which takes care of the first summand. For the second, 
consider the functions $\tilde h_{N,x}:[0,1]\rightarrow [0,1]$, $\tilde h_{N,x}(u):=\exp\left(u^{-1}\log\left(x \frac{1-w_N}{1-xw_N}\right)\right)\1_{\{0\}}(u)$ and note that by Dini's Theorem this converges to $\tilde h_x(u):=\exp(u^{-1}\log(x))\1_{\{0\}}(u)=x^{(u^{-1})}\1_{\{0\}}(u)$ uniformly over all $u \in [0,1]$. This allows us to estimate
\begin{align*}
 0 & \leq \;\E^n\bigg[x^{Z^N(-g)}	 \left(1-\left(\frac{1-w_N}{1-xw_N}\right)^{Z^N(-g)}\right)\bigg]	\\
			& \leq \sup_{k\in\N}\;\;\underbrace{x^k\left(1-\left(\frac{1-w_N}{1-xw_N}\right)^{k}\right)}_{\tilde h_x(k^{-1})-\tilde h_{N,x}(k^{-1})}\\
			& \leq \sup_{u \in [0,1]}\;\;(\tilde h_x(u)-\tilde h_{N,x}(u))\xrightarrow{N\rightarrow \infty} 0\\
\end{align*}
for any $g \in \Z$ and $x \in [0,1]$. Note that this convergence is again uniform in $n$.
Therefore, rescaling time and taking the limits in \eqref{eq:samplig_to_moment} we obtain
\begin{align*}
 \lim_{N\rightarrow \infty} \E^n&\left[x^{Z^N(-\lfloor\rho_N^{-1}t\rfloor)}\right] = \E_x[X(t)^n]
\end{align*}
for every $t\geq 0$ and $x \in [0,1]$, uniformly in $n \in \N_0$. Now to prove convergence of the semigroups fix $t>0$ and $\lambda>0$. Define $x:=e^{-\lambda}$ and observe that, using the observations above and the moment duality from Lemma \ref{lem:duality}
\begin{align*}
P^N_tf_\lambda(n)	& = \E_n[f_\lambda(Z^N(-\lfloor\rho_N^{-1}t\rfloor))] = \E_n[x^{Z^N(-\lfloor\rho_N^{-1}t\rfloor)}]\\			
			& \quad\qquad\qquad  \xrightarrow{N\rightarrow \infty}\; \E_x[X(t)^n]=\E_n[x^{Z(t)}]=P_tf_\lambda(n),
\end{align*}
uniformly in $n$, which completes the proof. 
\end{proof}


\subsection{Long-term behaviour}\label{sec:ltr}

Given its simplicity the proof of Theorem \ref{thm:XlongtermGriffiths} was given in Section \ref{subsec:longterm_X}, hence we only have to give the proofs of Lemma \ref{lem:amazinggrace} (Section \ref{subsec:proof_of_amazingGrace}) and Lemma \ref{lem:Xlongterm_pure_Griffiths} (Section \ref{subsec:proof_of_Lyapunov_lemma}). The proof of the latter requires the representation of the generator of the $\Lambda$-Fleming-Viot process with weak and rare selection given in Lemma \ref{lem:Griffithsgenerator_general} as well as two technical estimates summarized in Proposition \ref{prop:bounds_operator}. The proposition is stated in Section \ref{subsec:proof_of_Lyapunov_lemma} together with the proof of Lemma \ref{lem:Xlongterm_pure_Griffiths}, but the proofs of the proposition and Lemma are in turn postponed to Sections \ref{subsec:help_longterm} and \ref{subsec:Griffiths_proof}.

\subsubsection{Proof of Lemma \ref{lem:amazinggrace}}\label{subsec:proof_of_amazingGrace}

The proof of this Lemma makes use of the approximation of the $\Lambda$-Fleming-Viot process with the individual-based-model defined in Section \ref{subsec:DASG}. 

\begin{proof}[Proof of Lemma \ref{lem:amazinggrace}]
 As mentioned, the assumption $\P_x(X_{\infty}=0)=1$ should imply that the term $\E_x[X(t)]$ decays exponentially. The idea of the proof is to obtain a geometric bound for the analogous quantity in the finite population size model. This will be done by estimating the probability that a uniformly chosen individual and an individual of the strong type picked from the same generation have a disjoint ancestry. A crucial observation for this estimate is that the assumption $\P_x(X_{\infty}=0)=1$ implies an infinite genealogy of strong individuals, which will allow us to sample the strong individual uniformly and independently of the first individual. 
\\
Since we know that the limit for $t\rightarrow \infty$ of $X(t)$ exists and takes values in $\{0,1\}$ $\P_x$-a.s.\ for any $x \in [0,1]$, the implication ``$\Rightarrow$'' is immedate.
 
  The converse ``$\Leftarrow$'', however, requires more care. Let us first consider the case where $\sigma + c = 0$, i.e.\ there is no genetic drift. Since we assumed the process not to be constant, we know $\alpha_{\mathfrak s}\alpha^* + w > 0$, and by Lemma \ref{lem:Xlongterm_pure_Griffiths}.\ref{item:selectiontwins_helpfullemma} $\int_{[0,\infty[}\E_x[X(s)]\dd s < \infty$ (for all $ x \in \,[0,1[$), which suffices for ``$\Leftarrow$''.

 Assume now that $\sigma + c >0$.
The proof is split in three steps and uses that $(X(t))_{t\geq 0}$ is the (appropriately rescaled) weak limit of a finite particle system $(X^N(g))_{g \in \N_0}$, as observed in Theorem \ref{thm:convergence_rare_and_weak}. (Without loss of generality we let the time-index of $X^N$ start in $g_0=0$ and hence omit the additional superscript.) Since $(X^N(g))_{g \in \N_0}$ is a Markov chain on a finite state-space with two absorbing states 0 and 1, it will be absorbed in finite time, hence $X^N(\infty):=\lim_{g \rightarrow \infty}X^N(g) \in \{0,1\}$ exists $\P_x$-a.s.\ for any $x \in [N]_0/N$. 
 
 Fix $x\in [0,1[$ rational. All $N$ appearing henceforth are assumed to be such that $x \in [N]_0/N$.
 
 \textbf{Step 1:} We begin by proving that the assumption $\P_x(X_{\infty}=0)=1$ in particular implies
 \begin{align}\label{eq:prob_fix_N_converges}
  \lim_{N\rightarrow \infty}\P_x\left(X^N(\infty)=0\right) = 1.
 \end{align}
 Recall the duality from Proposition \ref{prop:iidduality}:
 \begin{align*}
  \E^n[H_{\mu^N}(x, Z^N(-g))] = \E_x[H_{\mu^N}(X^N(g), n)]
 \end{align*}
 for any $n \in \N$. The map $\bar x \mapsto H_{\mu^N}(\bar x,n)$ is continuous and bounded (for any $n \in \N_0$) with $H_{\mu^N}(0,n)=0$ and $H_{\mu^N}(1,n)=1$ therefore
 \begin{align}\label{eq:duality_converges_to_fixation_prob}
  \lim_{g \rightarrow \infty} \E_x[H_{\mu^N}(X^N(g), n)] & = \P_x(X^N(\infty)=1).
 \end{align}
Since for fixed $N \in \N$ the state space of $(Z^N(-g))_{g \in \N_0}$ is finite, it has an invariant distribution which we will denote by $\nu^N$ (and assume to be defined on $\N_0$). Let $\E^{\nu^N}$ denote the expectation under the law of the process with initial distribution $\nu^N$. Then, using again the duality
 \begin{align*}
  \E^{\nu^N}[H_{\mu^N}(x, Z^N(-g))] & = \sum_{n \in \N_0} \nu^N(n)\E^n[H_{\mu^N}(x, Z^N(-g))] \\
& = \sum_{n \in \N_0} \nu^N(n)\E_x[H_{\mu^N}(X^N(g), n)]\\
& \xrightarrow{g \rightarrow \infty} \P_x(X^N(\infty)=1),
 \end{align*}
 by \eqref{eq:duality_converges_to_fixation_prob} as the sum is actually finite and $\nu^N$ a distribution. However, the left-hand-side does actually not depend on $g$, since $\nu^N$ is precisely the invariant distribution. Therefore we can substitute the limit by an equality and obtain the representation
 \begin{align}\label{eq:fixation_prob_for_any_g}
   \sum_{n \in \N_0} \nu^N(n)\E_x[H_{\mu^N}(X^N(g), n)] = \P_x(X^N(\infty)=1)
 \end{align}
 for any $g \in \N_0$. One can check that for any $N \in \N$, $\bar x \in [0,1]$ and $n \in \N_0$ 
 \begin{align*}
  H_{\mu^N}(\bar x, n) \leq H_{\mu^N}(\bar x, 1)
 \end{align*}
 and that $\bar x\mapsto H_{\mu^N}(\bar x, 1)$ converges to the identity map $\bar x\mapsto \bar x$ uniformly in $[0,1]$, as $N\rightarrow \infty$. By the latter, the weak convergence of $X^N(\lfloor \rho_N^{-1} t\rfloor)$ to $X(t)$ implies
 \begin{align*}
  \E_x[H_{\mu^N}(X^N(\lfloor \rho_N^{-1} t\rfloor), 1)] \xrightarrow{N\rightarrow \infty} \E_x[X(t)]
 \end{align*}
 for every $t \geq 0$. Thus, using \eqref{eq:fixation_prob_for_any_g} with $g = \lfloor \rho_N^{-1} t\rfloor$ and the observations above
 \begin{align*}
  0 \leq \; \limsup_{N\rightarrow \infty} \P_x(X^N(\infty)=1) & = \limsup_{N\rightarrow \infty} \sum_{n \in \N_0} \nu^N(n)\E_x[H_{\mu^N}(X^N(\lfloor \rho_N^{-1} t\rfloor), n)] \\ 
& \leq \limsup_{N\rightarrow \infty} \E_x[H_{\mu^N}(X^N(\lfloor \rho_N^{-1} t\rfloor), 1)]{\sum_{n \in \N_0} \nu^N(n)} \\
& = \E_x[X(t)]
 \end{align*}
 for any $t \geq 0$, where the last equality follows from the fact that ${\sum_{n \in \N_0} \nu^N(n)} =1$. Under the assumption $\P_x(X_{\infty}=0)=1$, however, the right-hand-side converges to 0, as $t \rightarrow \infty$, and since the left-hand-side does not depend on $t$, we have proven \eqref{eq:prob_fix_N_converges}.
 
 \textbf{Step 2:} Step 1 now allows us to condition on the event of extinction of the finite-population process (for $N$ sufficiently large). Denote by $\bar \P_x^N$ the probability measure obtained from $\P_x$ by conditioning on $\{X^N(\infty)=0\}$. Let $T^N_0$ be the time to extinction of $X^N$, i.e.
 \begin{align*}
  T^N_0:=\inf\{g \geq 0 \mid X^N(g)=0\}.
 \end{align*}
Hence, we are precisely conditioning on $T^N_0$ being finite. By definition of $T^N_0$, any individual in a generation after $T^N_0$ is of the strong type 1. Recall from Definition \ref{def:discrete_genealogy}, that we denote by $A^N_l(w)$ the potential ancestors of an individual $w$ alive $l$ generations back in time (counting from $g(w)\in \N_0$, the generation of $w$). Analogously, define the \emph{strong potential ancestors} of $w$ in the typed process alive $l$ generations back
 \begin{align*}
  S\hspace{-2pt} A^N_l(w):=\{v \in A^N_l(w) \mid v \text{ is of type 1}\},\quad  l = 0, \ldots, g(w).
 \end{align*}
 Note that $S\hspace{-2pt} A^N_l(w) = \emptyset$ if, and only if, $w$ is of the weak type 0. 
 
 Fix a generation $g \geq 0$. We want to prove a geometric bound on $\bar \E_x^N[X^N(g)]$. First, note that for $I$ uniform on $[N]$ and independent of all other randomness in the model, we can express 
 \begin{align*}
  \bar\E_x^N[X^N(g)] & = \bar \P_x^N((g,I) \text{ is of type 0}).
  \end{align*}
 We will use the fact that, for an individual from generation $g$ to be of the weak type 0, its ancestry must be disjoint from the strong ancestry of any strong individual in the same generation $g$ (which we know to exist $\P^N_x$-a.s.). However, in order to estimate the probability of this event, said individual must be chosen with care.
 
 Let $J$ be another random variable uniform on $[N]$, independent of any other randomness and use it to choose uniformly at random an individual from the first generation not before $g$ in which the strong type has fixated: 
 \begin{align*}
 W:=(\max\{g,T^N_0\}, J).
 \end{align*}
 Like all individuals in its generation, $W$ is of the strong type 1. $ S\hspace{-2pt} A^N_{\max\{g,T^N_0\}-g}(W)$ are its strong potential ancestors alive in generation $g$. Consequently, choose $W'$ uniformly at random (independently of all other randomness) among those  $ S\hspace{-2pt} A^N_{\max\{g,T^N_0\}-g}(W)$. Then $W'$ is by definition a strong individual in generation $g$. Due to exchangeability of the underlying ancestral process, however, its label $i(W')$ is uniform on $[N]$ and independent of $I$, the label of $(g,I)$, the first individual we chose. 
 
 As remarked, for $(g,I)$ to be of the weak type 0, its ancestry $A^N_l((g,I))$ must be, in particular, disjoint from the (non-empty) strong ancestry $S\hspace{-2pt} A^N_{l}(W')$ of $W'$, for every previous generation, allowing us to estimate 
 \begin{align*}
  \bar \P_x^N((g,I) \text{ is of type }0) & \leq \bar \P_x^N(\,\forall\, l=0.\ldots, g:\;A^N_l((g,I))\cap S\hspace{-2pt} A^N_{l}(W')=\emptyset).
 \end{align*}
The probability for two given individuals to find a common ancestor of any type in the previous generation can be bounded from above by the probability $p_N$ for this same event with the selection mechanism \lq \lq turned off\rq\rq, whereby each individual always samples one and only one parent. A close look at the parent-choosing mechanism described in Section \ref{subsec:DASG} reveals that 
\begin{align}\label{eq:p_N}
 p_N = (1-c_N)/N + m_1c_N + m_2c_N/N 
\end{align}
 for positive, finite constants $m_1$ and $m_2$. Indeed, recall that in this set-up we have a generation with no skewed offspring distribution with probability $1-c_N$, in which case each individual chooses its parent uniformly, independently of the others and hence the probability of two given individuals choosing the same parent is $1/N$. In a generation with skewed offspring distribution (ocurring with probability $c_N$) the strength of the correlation, i.e. the probability of choosing the specially marked parent was given by $V\sim \Lambda_{\mathfrak c}$ which was sampled once for the whole generation. In this case we have three possibilities of choosing the same parent: Either both individuas choose the specially marked one, which happens with probability $V^2$. Or one chooses the specially marked and the other chooses uniformly at random, but still ends up with the same individual, which has probability $V(1-V)/N$. Or both choose uniformly and happen to choose the same, which has probability $(1-V)^2/N$. Together this means that
 \begin{align*}
  p_N	& = (1-c_N)\frac{1}{N} + c_N\int_{[0,1]} \left\{ v^2 + v(1-v)\frac{1}{N} + (1-v)^2\frac{1}{N}\right\}\Lambda_{\mathfrak c}(\dd v)\\
	& =  (1-c_N)\frac{1}{N} + c_N\int_{[0,1]} v^2 \Lambda_{\mathfrak c}(\dd v) +  \frac{c_N}{N}\int_{[0,1]}(1-v)\Lambda_{\mathfrak c}(\dd v).
 \end{align*}

 Since we constructed $W'$ and $(g,I)$ such that they are both uniform on $[N]$ and independent, by exchangeability and independence of the generations we can use this estimate for every generation and obtain
\begin{align*}
 \bar\E_x^N[X^N(g)] &\leq \bar \P_x^N(\,\forall\, l=0.\ldots, g:\;A^N_l((g,I))\cap S\hspace{-2pt} A^N_{l}(W')=\emptyset)\\
		    & \leq (1-p_N)^{g}\left(1-\frac{1}{N}\right),
\end{align*}
 where the last factor is the probability of the two individuals $(g,I)$ and $W'$ not being identical.

\textbf{Step 3:} All that remains to be done is to translate this bound into a bound for the scaling limit. Since the identity map is bounded and continuous on $[0,1]$ Theorem \ref{thm:convergence_rare_and_weak} implies
\begin{align*}
 \E_x[X^N(\lfloor \rho_N^{-1}t\rfloor)] \xrightarrow{N\rightarrow \infty} \E_x[X(t)]
\end{align*}
for any $t \geq 0$. At the same time, since $ \lim_{N\rightarrow \infty}\P_x\left(X^N(\infty)=0\right) = 1$ by step 1, using bounded convergence, we see
\begin{align*}
 \big\vert  \E_x[X(t)]- & \bar\E_x^N[X^N(\lfloor \rho_N^{-1}t\rfloor)] \big\vert \\
			& \leq \vert  \E_x[X(t)]-\E_x[X^N(\lfloor \rho_N^{-1}t\rfloor)] \vert \\
			& \qquad + \left\vert \E_x\left[X^N(\lfloor \rho_N^{-1}t\rfloor)\left(1 - \frac{\1_{\{X^N(\infty)=0\}}}{\P_x(X^N(\infty)=0)}\right)\right] \right\vert\\
			& \qquad \xrightarrow{N\rightarrow \infty} 0.
\end{align*}
Therefore the estimate from step 2 yields
\begin{align*}
  \E_x[X(t)] & = \lim_{N\rightarrow \infty} \bar\E_x^N[X^N(\lfloor \rho_N^{-1}t\rfloor)] \leq \lim_{N\rightarrow \infty}(1-p_N)^{\lfloor \rho_N^{-1}t\rfloor} 
\end{align*}
for every $t \geq 0$. Under the assumptions of Lemma \ref{lem:converges}, $\lim_{N\rightarrow\infty} p_N\rho_N^{-1} = \sigma + m_1c>0$, and therefore
 \begin{align*}
   \E_x[X(t)] \leq \lim_{N\rightarrow \infty}(1-p_N)^{\lfloor \rho_N^{-1}t\rfloor} = \exp(-(\sigma+m_1c)t)
 \end{align*}
which in turn implies $\int_{0}^\infty\E_x[X(s)]\dd s <\infty$.

Finally we remove the assumption of rationality on $x$. Thus, let $x \in [0,1[$ and assume $\P_x(X_{\infty}=0)=1$. By Remark \ref{rem:Xto01}, we may therefore choose an $\tilde x\in[x,1[$ rational, such that  $\P_{\tilde x}(X_{\infty}=0)=1$. For such $\tilde x$ we have just proven that $\int_{0}^\infty \E_{\tilde x}[X(s)] \dd s<\infty$. The claim follows since $\E_{x}[X(t)] \leq \E_{\tilde x}[X(t)]$ for every $t \geq 0$. 
\end{proof}


\subsubsection{Proof of Lemma \ref{lem:Xlongterm_pure_Griffiths} via Lyapunov functions}\label{subsec:proof_of_Lyapunov_lemma}

For the proof of Lemma \ref{lem:Xlongterm_pure_Griffiths} we use the same Lyapunov approach as in \cite{GriffithsLambda} to attain slightly weaker conclusions than \cite{GriffithsLambda} in that extra care was needed to avoid the risk of some illegal exchanges of expectation and integration.
To this end, let $\kappa \in \R$ and $N \in \N$ and consider the functions $f_{N,\kappa}: [0,1] \rightarrow \R$ defined as
 \begin{align}\label{eq:def_Lyapunov_truncated_functions}
  f_{N,\kappa}(x):= \sum_{n=1}^N\frac{1}{n} x^n - \kappa x.
  \end{align}
 Note that for $N \rightarrow \infty$ the sum converges to $-\log(1-x)$, but for each finite $N$ they have convenient integrability properties which will allow us to use these functions for a Lyapunov-type argument to prove both parts of the lemma.
 
 Before we come to the proof we first summarize some convenient observations on these functions and their interplay with the generator of $X$ given in \eqref{eq:generatorX} in the following proposition. Its proof is postponed to Section \ref{subsec:help_longterm}.

 Recall from Section \ref{subsec:longterm_X} the definitions of the strength of selection $w$, $\alpha_{\mathfrak s}$ and $\alpha^*$ and the strength of genetic drift $\beta^*$ and the quantities involved in it: 
   $W:=Y^{\mathfrak c} U$, where $Y^{\mathfrak c}\sim \Lambda_{\mathfrak c}$, $U$ has density $2u$ on $[0,1]$ and they are independent; $\alpha_{\mathfrak s} :=\Lambda_{\mathfrak s}([0,1])$ and  $V$ is uniform on $[0,1]$ and $Y^{\mathfrak s} \sim \alpha_{\mathfrak s}^{-1}\Lambda_{\mathfrak s}([0,1])$ and they are independent.

 \begin{prop}\label{prop:bounds_operator}
  For $\kappa \in \R$ and $N \in \N$ let $f_{N,\kappa}: [0,1] \rightarrow \R$ be defined as in \eqref{eq:def_Lyapunov_truncated_functions} above and recall the generator $\mathcal A$ of the $\Lambda$-Fleming-Viot process with weak and rare selection  given in \eqref{eq:generatorX}. Then the following holds:
  \begin{enumerate}
   \item We can define an operator $\tilde{\mathcal{A}}$ such that for all $\kappa \in \R$, $N \in \N$ and $x \in [0,1]$ 
   \begin{align*}
      \mathcal Af_{N, \kappa}(x)= x\tilde{ \mathcal{A}}f_{N, \kappa}(x).
   \end{align*}\label{prop_subclaim:bounds_operator_Atilde}
   \item For this operator we have lower and upper bounds $\underline{a}$ and $\overline{a}$ such that for all $\kappa \in \R$, $N \in \N$ and $x \in [0,1]$
   \begin{align*}
    \underline{a}(N,x,\kappa) \leq \tilde{ \mathcal{A}}f_{N, \kappa}(x) \leq \overline{a}(N,x,\kappa)
   \end{align*}
    where
   \begin{align}\label{eq:lower_bound_a}
    \underline{a}(N,x,\kappa)	& = \frac{1}{2}c\E\left[\frac{1}{(1-W)(1-x(1-W))}\right] -w \notag\\
				& \qquad -\alpha_{\mathfrak s}\E\left[\E\left[\sum_{l=0}^{K_{Y^{\mathfrak s}}-2}x^l\,\Big|\, Y^{\mathfrak s}\right]\frac{1}{ \E[K_{Y^{\mathfrak s}} -1\,\mid\,Y^{\mathfrak s} ]}\frac{1}{1+xV\E\left[\sum_{l=0}^{K_{Y^{\mathfrak s}}-2}x^l\,\mid\, Y^{\mathfrak s}\right]}\right]\notag\\
				& \qquad +\kappa w (1-x)+\kappa\alpha_{\mathfrak s}(1-x)\E\left[\E\left[\sum_{l=0}^{K_{Y^{\mathfrak s}}-2}x^l\,\Big|\, Y^{\mathfrak s}\right]\frac{1}{ \E[K_{Y^{\mathfrak s}} -1\,\mid\,Y^{\mathfrak s} ]}\right]\notag\\
				& \qquad - \frac{1}{2}c\E\left[\frac{(x(1-W)+W)^N}{W(1-W)}\right] 
   \end{align}
   \begin{align}\label{eq:upper_bound_a}
  \overline{a}(N,x,\kappa)	& = \frac{1}{2}c\E\left[\frac{1}{W(1-W)} \right] - w(1-x^N)\notag\\
				& \qquad + w\kappa(1-x)+\alpha_{\mathfrak s}\kappa(1-x)\E\Bigg[\E\left[\sum_{l=0}^{K_{Y^{\mathfrak s}}-2}x^l\,\Big|\, Y^{\mathfrak s}\right]\frac{1}{ \E[K_{Y^{\mathfrak s}} -1\,\mid\,Y^{\mathfrak s} ]}\Bigg] \notag\\
				& \qquad  -\alpha_{\mathfrak s}\E\Bigg[\E\left[\sum_{l=0}^{K_{Y^{\mathfrak s}}-2}x^l\,\Big|\, Y^{\mathfrak s}\right]\frac{1}{ \E[K_{Y^{\mathfrak s}} -1\,\mid\,Y^{\mathfrak s}]}\notag\\
				& \qquad \qquad \qquad \qquad \qquad \qquad \times\frac{1-\left(x-x(1-x)V\E\left[\sum_{l=0}^{K_{Y^{\mathfrak s}}-2}x^l\,\mid\, Y^{\mathfrak s}\right]\right)^N}{1+xV\E\left[\sum_{l=0}^{K_{Y^{\mathfrak s}}-2}x^l\,\mid\, Y^{\mathfrak s}\right]}\Bigg].
   \end{align}
  and thus
  \begin{align}
   \vert \underline{a}(N,x,\kappa) \vert \leq c\beta^* + w + \alpha_{\mathfrak s} + w  +\vert\kappa\vert\alpha_{\mathfrak s}+ c\beta^*<\infty,\label{eq:lower_bound_a_upperbounded}\\
   \vert \overline{a}(N,x,\kappa) \vert \leq c\beta^* + w + w\vert\kappa\vert + \alpha_{\mathfrak s} \vert\kappa\vert+ \alpha_{\mathfrak s}\alpha^* <\infty. \label{eq:upper_bound_a_upperbounded}
  \end{align}
  \item In particular, 
  \begin{align}
 \vert {\tilde{\mathcal A}}f_{N,\kappa}(x)\vert \leq 2c\beta^* + (2+\vert\kappa\vert)w+ (1+\vert\kappa\vert +\alpha^*)\alpha_{\mathfrak s}< \infty. \label{eq:A_with_Lyapunov_is_bounded}
\end{align} \label{prop_subclaim:A_with_Lyapunov_is_bounded}
 \end{enumerate}
 \end{prop}

\begin{proof}[Proof of Lemma \ref{lem:Xlongterm_pure_Griffiths}]
Proposition \ref{prop:bounds_operator}.\ref{prop_subclaim:A_with_Lyapunov_is_bounded}, i.e.\ equation \eqref{eq:A_with_Lyapunov_is_bounded}, in particular implies that $ \vert\tilde{\mathcal A}f_{N,\kappa}(x)\vert < \infty $ uniformly in $x \in [0,1]$. Therefore, for any  $N\in \N$ and $\kappa \in \R$ 
\begin{align*}
 \left(f_{N,\kappa}(X(t)) - f_{N,\kappa}(X(0)) - \int_{[0,t]}\mathcal Af_{N,\kappa}(X(s))\dd s\right)_{t\geq 0}
\end{align*}
is a martingale and we know
\begin{align}\label{eq:Martingale}
 \E_x[f_{N,\kappa}(X(t))] - f_{N,\kappa}(x) = \int_{[0,t]}\E_x[\mathcal Af_{N,\kappa}(X(s))]\dd s
\end{align}
holds for all $N\in \N$, $\kappa \in \R$, $x \in [0,1]$ and $t\geq 0$. 

Before we turn to the proof of the two statements of the lemma, we draw a first helpful conclusion: Under the assumption $\beta^*<\infty$, \eqref{eq:Martingale} implies that for any $x \in [0,1[$ and any $t\geq 0$ the probability of the weak allele having reached fixation is zero, i.e.
\begin{align}\label{eq:helpneverhit1}
 \P_x(X(t)=1)=0. 
\end{align}
 If this were not the case there would exist $\bar x\in[0,1[$ and $\bar t>0$ such that $\P_{\bar x}(X(\bar t)=1)>0$. For this choice of parameters
\begin{align*}
 \E_{\bar x}[f_{N,0}(X(\bar t))] - f_{N,0}(\bar x) \quad \geq \quad  \P_{\bar x}(X(\bar t)=1)
\sum_{n=1}^N\frac{1}{n} - \sum_{n=1}^N\frac{1}{n}{\bar x}^n \xrightarrow{N\rightarrow \infty} \infty.
\end{align*}
On the other hand, \eqref{eq:Martingale} implies
\begin{align*}
 \E_{\bar x}[f_{N,0}(X(\bar t))] - f_{N,0}(\bar x) & = \int_{[0,\bar t]}\E_{\bar x}[\mathcal Af_{N,\kappa}(X(s))]\dd s \\
                                                    & \leq \bar t( 2c\beta^* + (2+\vert\kappa\vert)w+ (1+\vert\kappa\vert +\alpha^*)\alpha_{\mathfrak s}) <\infty
\end{align*}
for any $N \in \N$, which is a contradiction and proves the claim. 

Indeed, since $1$ is an absorbing state the sets $\{X_t=1\}$ are increasing in $t\geq 0$ and therefore we have actually proven that the fixation time is almost surely infinite, but we will not use this here. 

We are now ready to tackle the lemma itself.

\textbf{Proof of \textit{(i)}:} Assume $\alpha_{\mathfrak s}\alpha^* +w<c\beta^* $. Equivalently, $c\beta^*-w-\alpha_{\mathfrak s}\alpha^* >0$. For this part we will work with the lower bound $\underline{a}$. Note that parts of the first and third summand of $\underline{a}$ converge (e.g.\ by bounded convergence) to the critical values as $x$ tends to 1:
\begin{align*}
 \frac{1}{2}\E\left[\frac{1}{(1-W)(1-x(1-W))}\right] & \xrightarrow{x\rightarrow 1} \beta^*
 \end{align*}
 and
 \begin{align*}
 \E\left[\E\left[\sum_{l=0}^{K_{Y^{\mathfrak s}}-2}x^l\,\Big|\, Y^{\mathfrak s}\right]\frac{1}{ \E[K_{Y^{\mathfrak s}} -1\,\mid\,Y^{\mathfrak s} ]}\frac{1}{1+xV\E\left[\sum_{l=0}^{K_{Y^{\mathfrak s}}-2}x^l\,\mid\, Y^{\mathfrak s}\right]}\right] & \xrightarrow{x\rightarrow 1} \alpha^* 
\end{align*}
whence we may choose an $x_0\in [0,1[$ such that for all $x \geq x_0$
\begin{align*}
 c\frac{1}{2}& \E\left[\frac{1}{(1-W)(1-x(1-W))}\right] -w \\
	    & \qquad -\alpha_{\mathfrak s}\E\left[\E\left[\sum_{l=0}^{K_{Y^{\mathfrak s}}-2}x^l\,\Big|\, Y^{\mathfrak s}\right]\frac{1}{ \E[K_{Y^{\mathfrak s}} -1\,\mid\,Y^{\mathfrak s} ]}\frac{1}{1+xV\E\left[\sum_{l=0}^{K_{Y^{\mathfrak s}}-2}x^l\,\mid\, Y^{\mathfrak s}\right]}\right]\\
	    & \geq \frac{c\beta^*-w -\alpha_{\mathfrak s}\alpha^*}{2}=:\delta>0.
\end{align*}
We use $\kappa$ to control the remainig $x\leq x_0$. Hence, fix $\kappa_0\geq 0$ sufficiently large such that 
\begin{align}\label{eq:helpkappa0}
 \kappa_0w(1-x_0)+\kappa_0\alpha_{\mathfrak s}(1-x_0)\E\left[\E\left[\sum_{l=0}^{K_{Y^{\mathfrak s}}-2}x_0^l\,\Big|\, Y^{\mathfrak s}\right]\frac{1}{ \E[K_{Y^{\mathfrak s}} -1\,\mid\,Y^{\mathfrak s} ]}\right] \notag\\
 \geq w + \alpha_{\mathfrak s} - \frac{1}{2}\E\left[\frac{1}{1-W}\right] + \delta.
\end{align}
Note that $(1-x_0)\sum_{l=0}^{k-2} x_0^l = 1-x_0^{k-1}$, thus the left-hand-side is decreasing in $x$ and therefore \eqref{eq:helpkappa0} actually holds for all $x \leq x_0$. 

Combined, we obtain for all but the last summand of $\underline{a}$: For all $x \in [0,1]$
\begin{align*}
 c\frac{1}{2}& \E\left[\frac{1}{(1-W)(1-x(1-W))}\right] - w\\
	    & \qquad  -\alpha_{\mathfrak s}\E\left[\E\left[\sum_{l=0}^{K_{Y^{\mathfrak s}}-2}x^l\,\Big|\, Y^{\mathfrak s}\right]\frac{1}{ \E[K_{Y^{\mathfrak s}} -1\,\mid\,Y^{\mathfrak s} ]}\frac{1}{1+xV\E\left[\sum_{l=0}^{K_{Y^{\mathfrak s}}-2}x^l\,\mid\, Y^{\mathfrak s}\right]}\right]\\
	    & \qquad + \kappa_0w(1-x) + \kappa_0\alpha_{\mathfrak s}(1-x)\E\left[\E\left[\sum_{l=0}^{K_{Y^{\mathfrak s}}-2}x^l\,\Big|\, Y^{\mathfrak s}\right]\frac{1}{ \E[K_{Y^{\mathfrak s}} -1\,\mid\,Y^{\mathfrak s} ]}\right] \\
	    & \qquad \geq \qquad  \delta
\end{align*}
which in turn implies
\begin{align*}
 \underline{a}(N,x,\kappa_0) \geq \delta - c\frac{1}{2}\E\left[\frac{(x(1-W)+W)^N}{W(1-W)}\right]
\end{align*}
for all $N\in\N$ and all $x\in[0,1]$.

All these estimates serve to control the right-hand-side of \eqref{eq:Martingale}. While we were able to do the previous estimates for deterministic $x$, the last term will be tackled using bounded convergence after substituting $X(s)$ for $x$ in the expressions above. 
In order to obtain a contradiction, assume
\begin{align}\label{eq:helpcontradiction_finite_integral}
 \forall x\in]0,1[\quad \int_{[0,\infty[}\E_x[X(s)]\dd s<\infty,
\end{align}
which in particular implies that for any $x \in [0,1[$ $\P_x(X_{\infty}=1)=0$ and therefore
\begin{align}\label{eq:supsmall}
X_{\sup}:=\sup_{t\in[0,\infty[}X(t) <1 \quad \P_x\text{-a.s.}
\end{align}
Without loss of generality we may assume $(X(t))_{t\geq 0}$ and $W$ to be independent and we can estimate
\begin{align}\label{eq:helpboundI1}
  \E_x\left[X(s)\underline{a}(N,X(s),\kappa_0)\right]	& \geq \E_x\left[X(s)\left(\delta - \frac{c}{2}\frac{(X(s)(1-W)+W)^N}{W(1-W)}\right)\right]\notag	\\
							& \geq \E_x\left[X(s)\left(\delta - \frac{c}{2}\frac{(X_{\sup}(1-W)+W)^N}{W(1-W)}\right)\right]
\end{align}
for all $N \in\N$. Using bounded convergence and \eqref{eq:supsmall} we see that the right-hand-side converges
\begin{align*}
 \E_x\left[X(s)\left(\delta - \frac{c}{2}\frac{(X_{\sup}(1-W)+W)^N}{W(1-W)}\right)\right] \xrightarrow{N\rightarrow \infty} \delta \E_x[X(s)] > 0.
\end{align*}
Assumption \eqref{eq:helpcontradiction_finite_integral} allows us to use bounded convergence again, to estimate
\begin{align*}
 \lim_{N\rightarrow \infty} \int_{[0,\infty]}\E_x&\left[X(s)\underline{a}(N,X(s),\kappa_0)\right]\dd s \\
		  & \geq \lim_{N\rightarrow \infty} \int_{[0,\infty]} \E_x\left[X(s)\left(\delta - \frac{c}{2}\frac{(X_{\sup}(1-W)+W)^N}{W(1-W)}\right)\right]\dd s\\
		  & = \delta\int_{[0,\infty]} \E_x\left[X(s)\right]\dd s := \tilde \delta(x)> 0
\end{align*}
 for any  $x \in [0,1[$. Recalling that $\underline{a}$ was designed as a lower bound (and using \eqref{eq:helpneverhit1}) this yields that for every $x \in [0,1[$ there exists a $ N_0 = N_0(x)$ such that for every $ N\geq N_0$ 
\begin{align}\label{eq:helplowerboudpositive}
  \int_{[0,\infty]}\E_x\left[\mathcal A f_{N,\kappa_0}(X(s))\right]\dd s & = \int_{[0,\infty]}\E_x\left[X(s)\tilde{\mathcal A} f_{N,\kappa_0}(X(s))\right]\dd s \notag \\
								  & \geq \int_{[0,\infty]}\E_x\left[X(s)\underline{a}(N,X(s),\kappa_0)\right]\dd s \geq  \frac{\tilde \delta(x)}{2}>0.
\end{align}
Now choose $\bar x$ sufficiently close to 1, such that 
\begin{align*}
 \log(1-\bar x) + \kappa_0\bar x < -1.
\end{align*}
Then choose $\bar N \geq N_0(\bar x)$ sufficiently large such that
\begin{align*}
 - \sum_{n=1}^{\bar N}\frac{1}{n}\bar x^n + \kappa_0 \bar x < -\frac{1}{2}.
\end{align*}
Recall that assumption \eqref{eq:helpcontradiction_finite_integral} in particular implies $\P_{\bar x}(X_{\infty}=0)=1$ and therefore
\begin{align*}
 \lim_{t\rightarrow \infty} \E_{\bar x}[f_{\bar N,\kappa_0}(X(t))] - f_{\bar N,\kappa_0}(\bar x) =  0 - \sum_{n=1}^{\bar N}\frac{1}{n}\bar x^n + \kappa_0 \bar x.
\end{align*}
Together with equation \eqref{eq:Martingale} and \eqref{eq:helplowerboudpositive} this implies
\begin{align*}
 -\frac{1}{2} >  \lim_{t\rightarrow \infty} \E_{\bar x}[f_{\bar N,\kappa_0}(X(t))] - f_{\bar N,\kappa_0}(\bar x) = \lim_{t\rightarrow \infty} \int_{[0,t]}\E_{\bar x}\left[\mathcal A f_{\bar N,\kappa_0}(X(s))\right]\dd s\geq  \frac{\tilde \delta(\bar x)}{2}>0,
\end{align*}
which clearly is a contradiction.

Hence, we have proven that assuming $\alpha_{\mathfrak s}\alpha^*+w < c\beta^*$, we know 
\begin{align*}
 \exists \bar x\in]0,1[\quad \int_{[0,\infty[}\E_{\bar x}[X(s)]\dd s=\infty.
\end{align*}

\textbf{Proof of \textit{(ii)}:} Assume $\alpha_{\mathfrak s}\alpha^* + w  > c\beta^*$. Equivalently $c\beta^*-w - \alpha_{\mathfrak s}\alpha^* <0$. For this part we will work with the upper bound $\overline{a}$ which can be rearranged to 
 \begin{align*}
  \overline{a}&(N,x,\kappa)	= c\beta^* - w - \alpha_{\mathfrak s}\E\Bigg[\E\left[\sum_{l=0}^{K_{Y^{\mathfrak s}}-2}x^l\,\Big|\, Y^{\mathfrak s}\right]\frac{1}{ \E[K_{Y^{\mathfrak s}} -1\,\mid\,Y^{\mathfrak s}]}\\
				& \qquad \qquad \qquad \qquad \qquad \qquad \qquad \qquad \qquad \qquad \times\frac{1}{1+xV\E\left[\sum_{l=0}^{K_{Y^{\mathfrak s}}-2}x^l\,\mid\, Y^{\mathfrak s}\right]}\Bigg] \\
				& \qquad +w\kappa(1-x) + \alpha_{\mathfrak s}\kappa(1-x)\E\left[\E\left[\sum_{l=0}^{K_{Y^{\mathfrak s}}-2}x^l\,\Big|\, Y^{\mathfrak s}\right]\frac{1}{ \E[K_{Y^{\mathfrak s}} -1\,\mid\,Y^{\mathfrak s} ]}\right] \\
				& \qquad  + w x^N + \alpha_{\mathfrak s}\E\Bigg[\E\left[\sum_{l=0}^{K_{Y^{\mathfrak s}}-2}x^l\,\Big|\, Y^{\mathfrak s}\right]\frac{1}{ \E[K_{Y^{\mathfrak s}} -1\,\mid\,Y^{\mathfrak s}]}\\
				& \qquad \qquad \qquad \qquad \qquad \qquad \qquad \times\frac{x^N\left(1-(1-x)V\E\left[\sum_{l=0}^{K_{Y^{\mathfrak s}}-2}x^l\,\mid\, Y^{\mathfrak s}\right]\right)^N}{1+xV\E\left[\sum_{l=0}^{K_{Y^{\mathfrak s}}-2}x^l\,\mid\, Y^{\mathfrak s}\right]}\Bigg].
 \end{align*}
 We proceed along the lines of the previous proof. Again, note that the third summand tends to the critical values for $x \rightarrow 1$
 \begin{align*}
  \E\left[\E\left[\sum_{l=0}^{K_{Y^{\mathfrak s}}-2}x^l\,\Big|\, Y^{\mathfrak s}\right]\frac{1}{ \E[K_{Y^{\mathfrak s}} -1\,\mid\,Y^{\mathfrak s}]}\frac{1}{1+xV\E\left[\sum_{l=0}^{K_{Y^{\mathfrak s}}-2}x^l\,\mid\, Y^{\mathfrak s}\right]}\right] \xrightarrow{x\rightarrow 1} \alpha^*.
 \end{align*}
Therefore there exists an $x_0 \in]0,1[$ such that for all $x\geq x_0$
\begin{align*}
  c\beta^* - w - \alpha_{\mathfrak s}\E&\left[\E\left[\sum_{l=0}^{K_{Y^{\mathfrak s}}-2}x^l\,\Big|\, Y^{\mathfrak s}\right]\frac{1}{ \E[K_{Y^{\mathfrak s}} -1\,\mid\,Y^{\mathfrak s}]}\frac{1}{1+xV\E\left[\sum_{l=0}^{K_{Y^{\mathfrak s}}-2}x^l\,\mid\, Y^{\mathfrak s}\right]}\right]\\
	    & \leq \frac{c\beta^* - w -\alpha_{\mathfrak s}\alpha*}{2}=:-\delta <0.
\end{align*}
Again, we control the remaining $x$ with the help of $\kappa$. Choose $\kappa_0\leq 0$ with $\vert \kappa_0\vert$ sufficiently large such that
\begin{align*}
w\kappa_0(1-x_0) + \alpha_{\mathfrak s}\kappa_0&(1-x_0)\E\left[\E\left[\sum_{l=0}^{K_{Y^{\mathfrak s}}-2}x_0^l\,\Big|\, Y^{\mathfrak s}\right]\frac{1}{ \E[K_{Y^{\mathfrak s}} -1\,\mid\,Y^{\mathfrak s} ]}\right] \leq - c\beta^* - \delta.
\end{align*}
Since the left-hand-side is increasing in $x$ (recall that $\kappa_0\leq0$), the estimate actually holds for all $x\in[0,x_0[$. Thus we now know that for all $x \in[0,1]$ we can bound the first five summands of $\overline{a}$ by
\begin{align*}
  c\beta^* -  \alpha_{\mathfrak s}\E&\left[\E\left[\sum_{l=0}^{K_{Y^{\mathfrak s}}-2}x^l\,\Big|\, Y^{\mathfrak s}\right]\frac{1}{ \E[K_{Y^{\mathfrak s}} -1\,\mid\,Y^{\mathfrak s}]}\frac{1}{1+xV\E\left[\sum_{l=0}^{K_{Y^{\mathfrak s}}-2}x^l\,\mid\, Y^{\mathfrak s}\right]}\right] \\
				&+ \alpha_{\mathfrak s}\kappa_0(1-x)\E\left[\E\left[\sum_{l=0}^{K_{Y^{\mathfrak s}}-2}x^l\,\Big|\, Y^{\mathfrak s}\right]\frac{1}{ \E[K_{Y^{\mathfrak s}} -1\,\mid\,Y^{\mathfrak s} ]}\right] \leq -\delta < 0.
\end{align*}
Note that we also find a simple bound for the last two summands and thereby obtain
\begin{align}\label{eq:help_upperbounda}
 \overline{a}(N,x,\kappa_0) \leq - \delta + w x^N + \alpha_{\mathfrak s} x^N
\end{align}
for all $x \in [0,1]$ and $N \in \N$. We will return to this estimate below.

In order to obtain a contradiction, assume 
\begin{align}\label{eq:helpcontradiction_integral_infinite}
 \exists x\in]0,1[\quad \int_{[0,\infty[}\E_x[X(s)]\dd s = \infty.
\end{align}
Fix one such $x$ for the remainder of the proof. 
By monotone (or bounded) convergence, 
\begin{align}\label{eq:help_con_to_minus_delta_2}
 \E_x[X(s)(-\delta+(w+&\alpha_{\mathfrak s}) X(s)^N)]\notag\\
						& \xrightarrow{N\rightarrow \infty}    (-\delta +w+\alpha_{\mathfrak s})\underbrace{\P_x(X(s)=1)}_{\overset{\eqref{eq:helpneverhit1}}{=}  0} -\delta\underbrace{\E_x[X(s)\1_{\{X(s)<1\}}]}_{\overset{\eqref{eq:helpcontradiction_integral_infinite}}{>}0} \notag \\
		    & \qquad \qquad  = -\delta\E_x[X(s)]<0.
\end{align}
We can now estimate the right-hand-side of \eqref{eq:Martingale}, where we again use monotone (or bounded) convergence in the last step: For every 
$t>0$
\begin{align}\label{eq:help_est_rhs_ii_2}
 \int_{[0,t]}\E_x[\mathcal Af_{N,\kappa_0}(X(s))]\dd s 	& \leq \int_{[0,t]}\E_x[X(s)\overline{a}(N,X(s),\kappa_0)]\dd s\notag\\
							& \overset{\eqref{eq:help_upperbounda}}{\leq} \int_{[0,t]}\E_x[X(s)(-\delta+(w+\alpha_{\mathfrak s})X(s)^N)]\dd s\notag\\
							&  \xrightarrow{N\rightarrow \infty} -\delta \int_{[0,t]}\E_x[X(s)]\dd s<0.
\end{align}

By assumption \eqref{eq:helpcontradiction_integral_infinite} we may choose $\bar t >0$ such that
\begin{align*}
 \int_{[0,\bar t]}\E_x[X(s)]\dd s \geq -4\frac{\log(1-x)-\kappa_0x}{\delta}
\end{align*}
 and by \eqref{eq:help_est_rhs_ii_2} an $\bar N=\bar N(\bar t)$ such that
 \begin{align}\label{eq:helpcontradiction_integral_very_negative}
  \int_{[0,\bar t]}\E_x[\mathcal Af_{\bar N,\kappa_0}(X(s))]\dd s  \leq 2(\log(1-x)+\kappa_0x). 
\end{align}

Now that we have the right-hand-side under control, we turn to the left-hand-side of \eqref{eq:Martingale}.
Remembering that $\kappa_0\leq0$, we crudely estimate
\begin{align*}
 \E_x\left[f_{\bar N,\kappa_0}(X(\bar t))\right] - f_{\bar N,\kappa_0}(x)	& = \sum_{n=1}^{\bar N}\frac{1}{n}\E_x[X(t)^n] -\kappa_0\E_x[X(t)] - \sum_{n=1}^{\bar N}\frac{1}{x^n} + \kappa_0x \\
								& \geq - \sum_{n=1}^{\bar N}\frac{1}{n}x^n + \kappa_0x \geq \log(1-x) +\kappa_0x
\end{align*}
which is a contradiction to \eqref{eq:Martingale} given \eqref{eq:helpcontradiction_integral_very_negative}.
\end{proof}

\subsubsection{Proof of Proposition \ref{prop:bounds_operator}: Bounds on the generator} \label{subsec:help_longterm}

It will be convenient to consider the summands of the generator individually. Therefore, we denote by $\mathcal A_{\mathfrak s}$ the part of the generator describing the rare selection mechanism, i.e.\
\begin{align}\label{eq:def_generator_for_rare_selection}
\mathcal A_{\mathfrak s}f(x):=\int_{[0,1]} \left(f(\E[x^{K_y}]) - f(x)\right) \frac{1}{\E[K_y-1]}\Lambda_{\mathfrak s}(\dd y) 
\end{align}
and the part describing the coordinated random genetic drift as 
\begin{align}\label{eq:def_generator_coordinated_genetic_drift}
\mathcal A_{\mathfrak c}f(x):=\mathcal Af(x)-\sigma\frac{1}{2}x(1-x)f''(x) +w x(1-x)f'(x) - \mathcal A_{\mathfrak s}f(x). 
\end{align}
 Recall that we abbreviate $\alpha_{\mathfrak s}:=\Lambda_{\mathfrak s}([0,1])$.
\begin{proof}[Proof of Proposition \ref{prop:bounds_operator}]
 
 We begin with a few observations on the functions \eqref{eq:def_Lyapunov_truncated_functions} and the generators applied to them. Their first derivative is
 \begin{align*}
  f'_{N,\kappa}(x)= \sum_{n=1}^N x^{n-1} - \kappa = \begin{cases}
                                                     \frac{1-x^N}{1-x} - \kappa,	& \text{if }x\neq 1,\\
                                                     N - \kappa, 				& \text{if }x=1.
                                                    \end{cases}
 \end{align*}
 Since the calculations are simple, but tedious, we consider the parts of the generator separately for clarity, using their representations given in Lemma \ref{lem:Griffithsgenerator_general}. Recall that we assumed $\sigma=0$. 
 
 Taking the expectation with respect to $V$, then in the next step plugging in the definition of the derivative we obtain the following expression: 

 \begin{align*}
  \mathcal A_{\mathfrak c} f_{N,\kappa}(x)	& = \frac{1}{2}cx(1-x)\E\left[f''_{N,\kappa}(x(1-W)+VW)\right]\\
					& = \frac{1}{2}cx(1-x)\E\left[\frac{1}{W}(f'_{N,\kappa}(x(1-W)+W)-f'_{N,\kappa}(x(1-W)))\right]\\
					& = x\underbrace{\frac{1}{2}c(1-x)\E\left[\frac{1}{W}\left(\frac{1-(x(1-W)+W)^N}{1-(x(1-W)+W)} - \frac{1-(x(1-W))^N}{1-x(1-W)}\right)\right]}_{=:\tilde{\mathcal{A}}_{\mathfrak c}f_{N,\kappa}(x)}.
 \end{align*}
 
Note that the expression $f'_{N,\kappa}(1)$ can only occur for $x=1$ and since in this case the right-hand-side is 0 if we assume something in the spirit of $0\cdot \infty =0$, we take the liberty to write this expression for all $x\in[0,1]$. 

Following the same reasoning we obtain a similar expression for the coordinated selection term for any $x \in [0,1]$:
\begin{align*}
\mathcal A_{\mathfrak s}&f_{N,\kappa}(x)	 \\
			& =  -\alpha_{\mathfrak s}x(1-x)\E\Bigg[\E\left[\sum_{l=0}^{K_{Y^{\mathfrak s}}-2}x^l\,\Big|\, Y^{\mathfrak s}\right]\frac{1}{ \E[K_{Y^{\mathfrak s}} -1\,\mid\,Y^{\mathfrak s} ]}\\
			& \qquad \qquad \qquad \qquad \qquad \qquad \qquad \qquad \qquad \quad \times f'_{N,\kappa}\left(x+V\E[x^{K_{Y^{\mathfrak s}}}-x\,\mid\, Y^{\mathfrak s}]\right)\Bigg]\\
			& =  -x\alpha_{\mathfrak s}(1-x)\E\Bigg[\E\left[\sum_{l=0}^{K_{Y^{\mathfrak s}}-2}x^l\,\Big|\, Y^{\mathfrak s}\right]\frac{1}{ \E[K_{Y^{\mathfrak s}} -1\,\mid\,Y^{\mathfrak s} ]}\\
			& \qquad \qquad \qquad \qquad \qquad \times\Bigg(\frac{1-\left(x-x(1-x)V\E\left[\sum_{l=0}^{K_{Y^{\mathfrak s}}-2}x^l\,\mid\, Y^{\mathfrak s}\right]\right)^N}{1-\left(x-x(1-x)V\E\left[\sum_{l=0}^{K_{Y^{\mathfrak s}}-2}x^l\,\mid\, Y^{\mathfrak s}\right]\right)}-\kappa\Bigg)\Bigg].
 \end{align*}
 Let $\tilde{\mathcal{A}}_{\mathfrak s}f_{N,\kappa}(x)$ be such that ${\mathcal{A}}_{\mathfrak s}f_{N,\kappa}(x)=x\tilde{\mathcal{A}}_{\mathfrak s}f_{N,\kappa}(x)$.  Define
  \begin{align*}
 \tilde{\mathcal A}f_{N,\kappa}(x):=\tilde{\mathcal A}_{\mathfrak c}f_{N,\kappa}(x)-w (1-x)f'_{N,\kappa}(x)+\tilde{\mathcal A}_{\mathfrak s}f_{N,\kappa}(x)  
 \end{align*}
 which is the claim of \ref{prop_subclaim:bounds_operator_Atilde}. in this proposition.
 
 From these expressions, we can deduce bounds for $\tilde{\mathcal A}f_{N,\kappa}(x)$ that are monotone in $N$. 
 
 We obtain a lower bound, by omitting certain positive terms in $\tilde{\mathcal A}f_{N,\kappa}$ as follows: For $x <1$, define
\begin{align*}
  \underline{a}&(N,x,\kappa)	:= \frac{1}{2}c(1-x)\E\left[\frac{1}{W}\left(\frac{1-(x(1-W)+W)^N}{1-(x(1-W)+W)} - \frac{1}{1-x(1-W)}\right)\right] \\
                & \qquad -w (1-x) \left(\frac{1}{1-x} - \kappa\right) \\
				& \qquad -\alpha_{\mathfrak s}(1-x)\E\Bigg[\E\left[\sum_{l=0}^{K_{Y^{\mathfrak s}}-2}x^l\,\Big|\, Y^{\mathfrak s}\right]\frac{1}{ \E[K_{Y^{\mathfrak s}} -1\,\mid\,Y^{\mathfrak s} ]}\\
				& \qquad \qquad \qquad \qquad \qquad  \times\Bigg(\frac{1}{1-\left(x-x(1-x)V\E\left[\sum_{l=0}^{K_{Y^{\mathfrak s}}-2}x^l\,\mid\, Y^{\mathfrak s}\right]\right)}-\kappa\Bigg)\Bigg].
\end{align*}
Then, trivially $\tilde{\mathcal A}f_{N,\kappa}(x) \geq \underline{a}(N,x,\kappa) $ for any $x \in [0,1[$ (and $N \in \N$, $\kappa \in \R$). 

We rearrange the terms of $\underline{a}$ to obtain a more convenient form. Separating the terms that do/do not depend on $N$ and $\kappa$, then canceling several $(1-x)$-terms in a second step
 \begin{align*}
  \underline{a}&(N,x,\kappa)	= \frac{1}{2}c(1-x)\E\left[\frac{1}{(1-W)(1-x)(1-x(1-W))}\right] \\
                & \qquad - \frac{1}{2}c(1-x)\E\left[\frac{(x(1-W)+W)^N}{W(1-W)(1-x)}\right] -w\\ 
				& \qquad -\alpha_{\mathfrak s}(1-x)\E\Bigg[\E\left[\sum_{l=0}^{K_{Y^{\mathfrak s}}-2}x^l\,\Big|\, Y^{\mathfrak s}\right]\frac{1}{ \E[K_{Y^{\mathfrak s}} -1\,\mid\,Y^{\mathfrak s} ]}\\
				& \hspace{150pt} \times\frac{1}{(1-x)\left(1+xV\E\left[\sum_{l=0}^{K_{Y^{\mathfrak s}}-2}x^l\,\mid\, Y^{\mathfrak s}\right]\right)}\Bigg]\\
				& \qquad +\kappa w (1-x) + \kappa\alpha_{\mathfrak s}(1-x)\E\left[\E\left[\sum_{l=0}^{K_{Y^{\mathfrak s}}-2}x^l\,\Big|\, Y^{\mathfrak s}\right]\frac{1}{ \E[K_{Y^{\mathfrak s}} -1\,\mid\,Y^{\mathfrak s} ]}\right]\\
				&= \frac{1}{2}c\E\left[\frac{1}{(1-W)(1-x(1-W))}\right] -w \\
				& \qquad -\alpha_{\mathfrak s}\E\left[\E\left[\sum_{l=0}^{K_{Y^{\mathfrak s}}-2}x^l\,\Big|\, Y^{\mathfrak s}\right]\frac{1}{ \E[K_{Y^{\mathfrak s}} -1\,\mid\,Y^{\mathfrak s} ]}\frac{1}{1+xV\E\left[\sum_{l=0}^{K_{Y^{\mathfrak s}}-2}x^l\,\mid\, Y^{\mathfrak s}\right]}\right]\\
				& \qquad +\kappa w (1-x)+\kappa\alpha_{\mathfrak s}(1-x)\E\left[\E\left[\sum_{l=0}^{K_{Y^{\mathfrak s}}-2}x^l\,\Big|\, Y^{\mathfrak s}\right]\frac{1}{ \E[K_{Y^{\mathfrak s}} -1\,\mid\,Y^{\mathfrak s} ]}\right]\\
				& \qquad - \frac{1}{2}c\E\left[\frac{(x(1-W)+W)^N}{W(1-W)}\right] .
 \end{align*}
 By extending $\underline{a}(N,1,\kappa):=-w - \alpha_{\mathfrak s}\alpha^*$, we do indeed obtain \eqref{eq:lower_bound_a} and
 \begin{align}\label{eq:helplowerboud}
  \tilde{\mathcal A}f_{N,\kappa}(x) \geq \underline{a}(N,x,\kappa)  
 \end{align}
 for all $N \in \N$, $\kappa \in \R$ and $x \in [0,1]$. 
 
 Note that we can estimate
 \begin{align*}
  \vert\underline{a}&(N,x,\kappa) \vert \leq c\beta^* + w + \alpha_{\mathfrak s} + w  +\vert\kappa\vert\alpha_{\mathfrak s}+ c\beta^*<\infty
 \end{align*}
 and the right-hand-side does not depend on $x$ nor $N$, which is \eqref{eq:lower_bound_a_upperbounded}.
 
 In a similar fashion we define an upper bound by omitting certain negative terms in $\tilde{\mathcal A}f_{N,\kappa}$: For $x<1$, define
 \begin{align*}
  \overline{a}&(N,x,\kappa)	:= \frac{1}{2}c(1-x)\E\left[\frac{1}{W}\left(\frac{1}{1-(x(1-W)+W)} \right)\right] - w(1-x)\left(\frac{1-x^N}{1-x} -\kappa\right)\\
				& \quad - \alpha_{\mathfrak s}(1-x)\E\Bigg[\E\left[\sum_{l=0}^{K_{Y^{\mathfrak s}}-2}x^l\,\Big|\, Y^{\mathfrak s}\right]\frac{1}{ \E[K_{Y^{\mathfrak s}} -1\,\mid\,Y^{\mathfrak s} ]}\\
				& \qquad \qquad \qquad \qquad \qquad \times\Bigg(\frac{1-\left(x-x(1-x)V\E\left[\sum_{l=0}^{K_{Y^{\mathfrak s}}-2}x^l\,\mid\, Y^{\mathfrak s}\right]\right)^N}{1-\left(x-x(1-x)V\E\left[\sum_{l=0}^{K_{Y^{\mathfrak s}}-2}x^l\,\mid\, Y^{\mathfrak s}\right]\right)}-\kappa\Bigg)\Bigg]\\
				& = \frac{1}{2}c\E\left[\frac{1}{W(1-W)} \right] - w(1-x^N)\\
				& \quad + w\kappa(1-x)+\alpha_{\mathfrak s}\kappa(1-x)\E\Bigg[\E\left[\sum_{l=0}^{K_{Y^{\mathfrak s}}-2}x^l\,\Big|\, Y^{\mathfrak s}\right]\frac{1}{ \E[K_{Y^{\mathfrak s}} -1\,\mid\,Y^{\mathfrak s} ]}\Bigg] \\\\
				& \quad  -\alpha_{\mathfrak s}\E\Bigg[\E\left[\sum_{l=0}^{K_{Y^{\mathfrak s}}-2}x^l\,\Big|\, Y^{\mathfrak s}\right]\frac{1}{ \E[K_{Y^{\mathfrak s}} -1\,\mid\,Y^{\mathfrak s}]}\\
				& \qquad \qquad \qquad \qquad \qquad \qquad \times\frac{1-\left(x-x(1-x)V\E\left[\sum_{l=0}^{K_{Y^{\mathfrak s}}-2}x^l\,\mid\, Y^{\mathfrak s}\right]\right)^N}{1+xV\E\left[\sum_{l=0}^{K_{Y^{\mathfrak s}}-2}x^l\,\mid\, Y^{\mathfrak s}\right]}\Bigg]
 \end{align*}
and extend it by $\overline{a}(N,1,\kappa):=c\beta^*$ to obtain \eqref{eq:upper_bound_a}. As before, by definition,
\begin{align}\label{eq:helpupperbound}
 \tilde{\mathcal A}f_{N,\kappa}(x) \leq \overline{a}(N,x,\kappa) 
\end{align}
for all $N \in \N$, $\kappa \in \R$ and $x \in [0,1]$. In addition, if we recall that $-x(1-x)\sum_{l=0}^kx^l=x^k-x$, we can simply estimate
\begin{align*}
  \vert \overline{a}&(N,x,\kappa) \vert \leq c\beta^* + w + w\vert\kappa\vert + \alpha_{\mathfrak s} \vert\kappa\vert+ \alpha_{\mathfrak s}\alpha^* <\infty
\end{align*}
and the right-hand-side does not depend on $x$ nor $N$, as claimed in \eqref{eq:upper_bound_a_upperbounded}. 

In particular, combining the above observations, we now know that
\begin{align*}
 \vert\tilde{\mathcal A}f_{N,\kappa}(x)\vert \leq 2c\beta^* + (2+\vert\kappa\vert)w+ (1+\vert\kappa\vert +\alpha^*)\alpha_{\mathfrak s}< \infty
\end{align*}
uniformly in $N\in \N$ and $x \in [0,1]$ and thus also \eqref{eq:A_with_Lyapunov_is_bounded}, i.e.\ \ref{prop_subclaim:A_with_Lyapunov_is_bounded}. in this proposition holds and the proof is completed.
\end{proof}

\subsubsection{Proof of Lemma \ref{lem:Griffithsgenerator_general}: the Griffiths representation}\label{subsec:Griffiths_proof}
Recall that we abbreviated the parts of the generator $\mathcal A$ corresponding to rare selection and coordinated genetic drift as $\mathcal A_{\mathfrak s}$ and $\mathcal A_{\mathfrak c}$ in \eqref{eq:def_generator_for_rare_selection} and \eqref{eq:def_generator_coordinated_genetic_drift} respectively.

\begin{proof}
It is sufficient to find the correct representations for $\mathcal A_{\mathfrak c}$ and $\mathcal A_{\mathfrak s}$.
Theorem 1 in \cite{GriffithsLambda} already states
\begin{align*}
 \mathcal A_{\mathfrak c}f(x) = \frac{1}{2}cx(1-x)\E\left[f''(x(1-W)+VW)\right].
\end{align*}

On the other hand, in the spirit of the proof of said theorem, we can calculate

\begin{align*}
 \mathcal A_{\mathfrak s}f(x)	& = \int_{[0,1]} \left(f(\E[x^{K_y}]) - f(x)\right) \frac{1}{\E[K_y -1 ]}\Lambda_{\mathfrak s}(\dd y) \\
			& = \int_{[0,1]} \left(f(x+\E[x^{K_y}-x]) - f(x)\right) \frac{1}{ \E[K_y -1 ]}\Lambda_{\mathfrak s}(\dd y) \\
			& = \int_{[0,1]} \int_{[0,1]}\E[x^{K_y}-x]f'(x+u\E[x^{K_y}-x])\dd u \frac{1}{ \E[K_y -1 ]}\Lambda_{\mathfrak s}(\dd y) \\
			& = -\alpha_{\mathfrak s}\int_{[0,1]}\int_{[0,1]} x(1-x)\E\left[\sum_{l=0}^{K_y-2}x^l\right]\frac{1}{ \E[K_y -1 ]}\\
			& \qquad \qquad \qquad \qquad \qquad \qquad \qquad \qquad \times f'(x+u\E[x^{K_y}-x]) \dd u\frac{\Lambda_{\mathfrak s}(\dd y)}{\Lambda_{\mathfrak s}([0,1])} \\
			& = -\alpha_{\mathfrak s}x(1-x) \E\Bigg[\E\left[\sum_{l=0}^{K_{Y^{\mathfrak s}}-2}x^l\,\Big|\, Y^{\mathfrak s}\right]\frac{1}{ \E[K_{Y^{\mathfrak s}} -1\,\mid\,Y^{\mathfrak s} ]}\\
			& \qquad\qquad \qquad \qquad \qquad \qquad \times f'\left(x-x(1-x)V\E\left[\sum_{l=0}^{K_{Y^{\mathfrak s}}-2}x^l\,\mid\, Y^{\mathfrak s}\right]\right)\Bigg].
\end{align*}
We used once again the simple observation that $x^k-x = -x(1-x)\sum_{l=0}^{k-2}x^l$ for any $k \in \N$ and $x \in [0,1]$ if we interpret the empty sum as zero.
\end{proof}

\section*{Acknowledgements}
We thank Martin Slowik for pointing out a mistake in an earlier version of this manuscript, which we corrected with Lemma \ref{lem:amazinggrace}. We are also grateful to an anonymous referee for suggesting to comment on hypergeometric duality function, as an alternative to our sampling duality. ACG was supported by grants UNAM PAPIIT IA100419 and CONACYT CIENCIA BASICA A1-S-14615. MWB was supported by the SPP 1590, projects BL 1105/5-1 2 and KU 2886/1-1, and by the Deutsche Forschungsgemeinschaft (DFG, German Research Foundation) under Germany's Excellence Strategy -- The Berlin Mathematics
Research Center MATH+ (EXC-2046/1, project ID: 390685689) and acknowledges support from DFG CRC/TRR 388 ``Rough Analysis, Stochastic Dynamics and Related Fields'', Project A09.

\end{document}